\tikzset{
  vechead/.style={-{Stealth[scale=0.8]}}
}
\newtheorem{theorem}{Theorem}
\newtheorem{lemma}{Lemma}
\newtheorem{prop}{Proposition}
\theoremstyle{definition}
\newtheorem*{definition}{Definition}
\newcommand{\vcz}{\bm{\mathrm{0}}}
\newcommand{\vc}[1]{\bm{\mathrm{#1}}}
\newcommand{\dd}{\mathrm{d}}
\DeclareMathAlphabet{\mathdutchcal}{U}{dutchcal}{m}{n}
\SetMathAlphabet{\mathdutchcal}{bold}{U}{dutchcal}{b}{n}
\DeclareMathAlphabet{\mathdutchbcal}{U}{dutchcal}{b}{n}
\begin{document}

\title{Uniqueness of non-Euclidean Mass Center System and Generalized Pappus' Centroid Theorems in Three Geometries}
\renewcommand{\shorttitle}{Mass center system and Pappus theorem}

\author{Yunhi Cho and Hyounggyu Choi}

\begin{abstract}
G.A. Galperin introduced the axiomatic mass center system for finite point sets in spherical and hyperbolic spaces, proving the uniqueness of the mass center system. In this paper, we revisit this system and provide a significantly simpler proof of its uniqueness. Furthermore, we extend the axiomatic mass center system to manifolds. As an application of our system, we derive a highly generalized version of Pappus' centroid theorem for volumes in three geometries—Euclidean, spherical, and hyperbolic—across all dimensions, offering unified and notably simple proofs for all three geometries.
\end{abstract}
\maketitle

\section{Introduction}
The Greek mathematician Archimedes established the concept of the \emph{axiomatic mass center system} in Euclidean space (see pages 502-509 of \cite{archimedes}). With the advent of non-Euclidean geometry, the concept of the mass center naturally extended to non-Euclidean spaces. However, surprisingly, the mass center in non-Euclidean spaces has not been extensively studied, with only a few results available. For instance, in \cite{bjerre} and \cite{fog}, the spherical mass center was considered, while in \cite{andrade} and \cite{bonola}, the hyperbolic lever law, which is closely related to the hyperbolic mass center, was explored. After several related works, papers such as \cite{choe} and \cite{gray} have emerged, subtly relying on the concept of the non-Euclidean mass center. However, these papers often fail to recognize the correlation between the concepts they employ and the non-Euclidean mass center, instead treating it merely as an ancillary tool for their theoretical developments.

G. A. Galperin has set up the concept of axiomatic mass center system on the spherical space $\vc S^n$ and on the hyperbolic space $\vc H^n$ in his article \cite{galperin}. He started from a system of axioms that a system so called a \emph{mass center system} plausibly satisfies. Among the axioms, for example, one is that the mass center system must be isometry invariant. After introducing a system of axioms, he showed that there exists a unique mass center system satisfying the given system of axioms by using the D'Alembert's functional equation defined on $\mathbb{R}$:  $ f(x+y)+f(x-y)=2 f(x)f(y)
$.

Archimedes also proposed the axioms for mass center. There is a notable thing that must be mentioned. He basically assumed that the mass of the mass center is same as the total mass of the given points. However, Galperin excluded this assumption from the system of axioms. This is important. In result we will see, it is shown that the mass of the mass center is different from the total mass for spherical or hyperbolic cases.

The system of mass center is very simple as it will be presented. In Euclidean space, it is hopefully natural that the  mass of the mass center is same as the total mass of the concerned set. This is true in result. However, it will be turned out that the mass of the mass center is not necessarily same as the total mass in the cases for non-Euclidean spaces. At first glance, it seems surreal but after a while it becomes real. From now on, in this reason, we call the mass of the mass center the \emph{centered mass}.    

We will present an alternative proof for the uniqueness of the mass center for finite system. It seems that our proof is considerably simpler than the proof of Galperin. However, it is wholly a follower's benefit. Galperin's consideration on the concept of axiomatic mass center system is great and absolute. Additionally, we propose the axiomatic mass center system for manifolds and prove the uniqueness. By the way, Choe and Gulliver \cite{choe} reached to the concept of \emph{modified volume} in their own study on the isoperimetric inequality. The modified volume is nothing other than the centered mass. We are not sure that they considered the concept of centered mass.
As far as we know, Galperin is the first one who showed the uniqueness of the system and explicitly presented the concept of centered mass.

More than thirty years has passed after professor Galperin's work. There are not many applications of this work. We add one. That is \emph{Pappus' centroid theorem on non-Euclidean spaces}. In fact, already there is a very significant work of Alfred Grey and Vincent Miquel \cite{gray} on  Pappus' centroid theorem on non-Euclidean spaces. Their result contains much portion of ours but not all of ours. Citing Choe and Gulliver \cite{choe}, they used the concept of modified volume. It is also unsure that they considered the concept of centered mass.

Deriving volume formulas for geometric objects in non-Euclidean spaces is considerably more challenging compared to Euclidean space. Explicit volume formulas for non-Euclidean solids are generally unavailable, except in certain special cases, such as ideal polyhedra. In this study, we utilize the powerful tool of Pappus’ theorem to derive explicit volume formulas for four previously unknown types of non-Euclidean solids, including the right circular cone and the torus in three-dimensional space.

\section{Preliminary and Terminology}\label{section:models of spaces}
In this article, the space $\mathbb{X}^n$ denotes the $n$-dimensional Euclidean space $\mathbb{E}^n$, the spherical space $\mathbb{S}^n$, or the hyperbolic space $\mathbb{H}^n$. These spaces will be embedded in the ambient space $\mathbb{R}^{n+1}$. In particular, we adopt the \emph{hyperboloid model} for the hyperbolic space. In this section, we briefly outline the necessary background, most of which is well known.

\subsection{Spaces and bracket notation}
Let the \((n+1) \times (n+1)\) matrix \(J_{n+1}\) be
\[
J_{n+1} = \left(
\begin{array}{c|c}
  I_n & \vcz \\
  \hline
  \vcz & \delta_X
\end{array}
\right)_{(n+1) \times (n+1)},
\quad
\delta_X = \begin{cases}
    1, & \text{if } \mathbb{X}^n = \mathbb{E}^n \text{ or } \mathbb{S}^n, \\
    -1, & \text{if } \mathbb{X}^n = \mathbb{H}^n.
\end{cases}
\]
Let the symmetric bilinear form \(\langle \cdot, \cdot \rangle\) be defined on \(\mathbb{R}^{n+1} \times \mathbb{R}^{n+1}\) by
\[
\langle \vc{a}, \vc{b} \rangle = \vc{b}^T J_{n+1} \vc{a}.
\]

For integers \(n \geq 1\), let the Euclidean space \(\mathbb{E}^n\), the spherical space \(\mathbb{S}^n\), and the hyperbolic space \(\mathbb{H}^n\) lie in the ambient space \(\mathbb{R}^{n+1}\), such that, for \(\vc{r} = (x_1, \dots, x_{n+1})^T\),
\begin{align*}
\mathbb{E}^n &= \{\vc{r} \in \mathbb{R}^{n+1} \mid x_{n+1} = 1\}, \\
\mathbb{S}^n &= \{\vc{r} \in \mathbb{R}^{n+1} \mid \langle \vc{r}, \vc{r} \rangle = 1\}, \\
\mathbb{H}^n &= \{\vc{r} \in \mathbb{R}^{n+1} \mid \langle \vc{r}, \vc{r} \rangle = -1, x_{n+1} > 0\}.
\end{align*}

A basis \(\{\vc{v}_1, \dots, \vc{v}_{n+1}\}\) for \(\mathbb{R}^{n+1}\) is called an \emph{orthonormal} basis if
\begin{equation}
  \begin{pmatrix} \vc{v}_1 & \cdots & \vc{v}_{n+1} \end{pmatrix}^T J_{n+1} \begin{pmatrix} \vc{v}_1 & \cdots & \vc{v}_{n+1} \end{pmatrix} = J_{n+1}.
\end{equation}

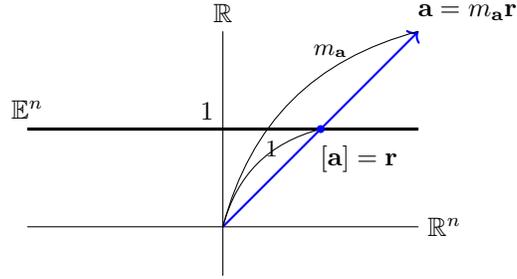
\begin{figure}[h!]
    \begin{center}
        \begin{center}
    \begin{tikzpicture}[scale=1.3]
    \draw (0, -0.5) -- (0, 2) node[anchor=south]{$\mathbb{R}$};
    \draw (-2,0) -- (2,0) node[anchor=west]{$\mathbb{R}^n$};

    \draw (0,1) node[anchor=south east]{$1$};

    \draw[very thick] (-2,1) node[anchor=south]{$\mathbb{E}^n$}  -- (2,1);
    \draw[blue, thick, ->] (0,0) -- (2,2);

    \draw (0,0) to [bend left=30] (1,1);
    \draw (0.5,0.8) node {\small $1$};

    \draw (0,0) to [bend left=30] (2,2);
    \draw (1.1,1.8) node {\small $m_{\vc a}$};

    \filldraw[blue] (1,1) circle (1pt);
    \draw (2.5, 2.2) node {$\vc a = m_{\vc a}\vc r$};
    \draw (0.9,0.7) node[anchor=west] {$[\vc a] = \vc r$};
  \end{tikzpicture}
\end{center}
        \caption{Euclidean Space $\mathbb{E}^n$ lying on the ambient space $\mathbb{R}^{n+1}$, a material point $[\vc a]$ with mass $m_{\vc a}$, and the corresponding material vector $\vc a$. The concepts of material point, mass, and material vector will be introduced in the following section. }\label{efig1}
    \end{center}
\end{figure}

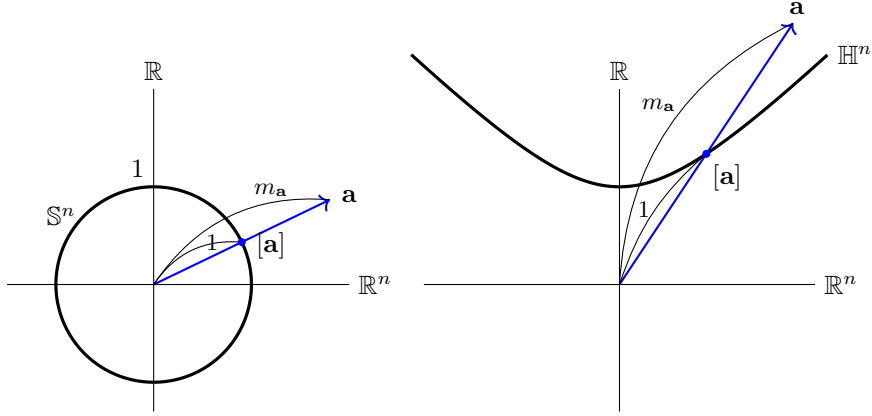
\begin{figure}[h!]
    \begin{center}
        \begin{center}
  \begin{tikzpicture}[scale=1.3]
    \draw (0, -1.3) -- (0, 2) node[anchor=south]{$\mathbb{R}$};
    \draw (-1.5,0) -- (2,0) node[anchor=west]{$\mathbb{R}^n$};

    \draw (0,1) node[anchor=south east]{$1$};

    \draw[very thick, domain=0:360, smooth, variable=\t, samples=100] 
      plot ({cos(\t)}, {sin(\t)});

    \draw (-0.7, 0.7) node[anchor=east]{$\mathbb{S}^n$};

    \draw[blue, thick, ->, trig format=rad] (0,0) -- ({2*cos(pi/7)}, {2*sin(pi/7)});

    \draw (0,0) to [bend left=30] ({cos(deg(pi/7))}, {sin(deg(pi/7))});
    \draw (0.6,{sin(deg(pi/7))}) node {\small $1$};

    \draw (0,0) to [bend left=30] ({2*cos(deg(pi/7))}, {2*sin(deg(pi/7))});
    \draw (1.2,{2*sin(deg(pi/7))+0.1}) node {\small $m_{\vc a}$};

    \filldraw[blue] ({cos(deg(pi/7))}, {sin(deg(pi/7))}) circle (1pt);
    \draw (2, 0.9) node { $\vc a$};
    \draw (1.2, 0.4) node { $[\vc a]$};
  \end{tikzpicture}
  \begin{tikzpicture}[scale=1.3]
    \draw (0, -1.3) -- (0, 2) node[anchor=south]{$\mathbb{R}$};
    \draw (-2,0) -- (2,0) node[anchor=west]{$\mathbb{R}^n$};

    \draw[very thick, domain=-1.5:1.5, smooth, variable=\t, samples=100] 
      plot ({sinh(\t)}, {cosh(\t)}) node[anchor=west]{$\mathbb{H}^n$};

    \draw[blue, thick, ->, trig format=rad] (0,0) -- ({2*sinh(0.8)}, {2*cosh(0.8)});

    \draw (0,0) to [bend left=15] ({sinh(0.8)}, {cosh(0.8)});
    \draw (0.25,{0.5*cosh(0.8)+0.1}) node {\small $1$};

    \draw (0,0) to [bend left=30] ({2*sinh(0.8)}, {2*cosh(0.8)});
    \draw (0.4,{cosh(0.8)+0.5}) node {\small $m_{\vc a}$};

    \draw ({2*sinh(0.75)}, {2*cosh(0.8)}) node[anchor=south west] { $\vc a$};
    \filldraw[blue] ({sinh(0.8)}, {cosh(0.8)}) circle (1pt);
    \draw (1.1, 1.1) node { $[\vc a]$};
  \end{tikzpicture}
\end{center}
        \caption{Spherical Space $\mathbb{S}^n$ and hyperbolic space $\mathbb{H}^n$ lying on the ambient space $\mathbb{R}^{n+1}$, material points $[\vc a]$ with mass $m_{\vc a}$, and the corresponding material vectors $\vc a$}\label{efig2}
    \end{center}
\end{figure}

A vector \(\vc{v} \in \mathbb{R}^{n+1}\) is called a \emph{negative} vector or a \emph{positive} vector if, respectively, the value \(\langle \vc{v}, \vc{v} \rangle\) is negative or positive. By Sylvester's theorem of inertia, if \(\mathbb{X}^n = \mathbb{H}^n\), or equivalently \(\delta_X = -1\), then there is only one negative vector among any orthonormal basis for \(\mathbb{R}^{n+1}\).

See Figures \ref{efig1} and \ref{efig2}. Let \(\vc{r}\) be a point in \(\mathbb{X}^n\). Denote the \emph{cone} over \(\mathbb{X}^n\) as
\[
\mathbb{R}^+ \mathbb{X}^n := \{ m \vc{r} \in \mathbb{R}^{n+1} \mid m \geq 0, \, \vc{r} \in \mathbb{X}^n \}.
\]
Let \(\vc{a}\in \mathbb{R}^+ \mathbb{X}^n\), \(\vc{a} \neq \vcz\). Then there exists a unique \(m = m_{\vc{a}} > 0\) and a unique point \(\vc{r}\) in \(\mathbb{X}^n\) such that \(\vc{a} = m \vc{r}\). In this situation, we denote
\[
\vc{r} = [\vc{a}].
\]
In other words, we can write
\begin{equation}
\vc{a} = m_{\vc{a}} [\vc{a}].
\end{equation}
With this notation, we can express
\begin{equation}\label{f3}
\langle \vc{a}, \vc{a} \rangle = \delta_X {m_{\vc{a}}}^2, \quad \text{if } \mathbb{X}^n = \mathbb{S}^n \text{ or } \mathbb{H}^n.
\end{equation}

\subsection{Metric, distance, and parametrization}
The infinitesimal \emph{metric} $\dd{s}$ on $\mathbb{X}^n$ is defined by
\[
{\dd{s}}^2 = {\dd{x_1}}^2 + \dots + \delta_{X}{\dd{x_{n+1}}}^2, \quad (x_1, \dots, x_{n+1}) \in \mathbb{X}^n \subset \mathbb{R}^{n+1}.
\]
Let $\vc{a}, \vc{b} \in \mathbb{R}^{+}\mathbb{X}^n$. Then, the \emph{distance} $d\left([\vc{a}], [\vc{b}]\right)$ between $[\vc{a}]$ and $[\vc{b}]$ is given by
\begin{align}
  d\left([\vc{a}], [\vc{b}]\right) &= \sqrt{\left([\vc{a}] - [\vc{b}]\right) \cdot \left([\vc{a}] - [\vc{b}]\right)}, \quad & \text{if } \mathbb{X}^n = \mathbb{E}^n, \nonumber \\
  \cos\left\{ d\left([\vc{a}], [\vc{b}]\right) \right\} &= \frac{\langle \vc{a}, \vc{b} \rangle}{\sqrt{\langle \vc{a}, \vc{a} \rangle \langle \vc{b}, \vc{b} \rangle}}, \quad & \text{if } \mathbb{X}^n = \mathbb{S}^n, \label{distance1} \\
  \cosh\left\{ d\left([\vc{a}], [\vc{b}]\right) \right\} &= \frac{-\langle \vc{a}, \vc{b} \rangle}{\sqrt{\langle \vc{a}, \vc{a} \rangle \langle \vc{b}, \vc{b} \rangle}}, \quad & \text{if } \mathbb{X}^n = \mathbb{H}^n. \label{distance2}
\end{align}

For notational convenience, we introduce the functions $\sin_X$ and $\cos_X$ as follows:
\begin{equation}
 \sin_X(x) = \begin{cases}
    x, & \quad \text{if } \mathbb{X}^n = \mathbb{E}^n, \\
    \sin x, & \quad \text{if } \mathbb{X}^n = \mathbb{S}^n, \\
    \sinh x, & \quad \text{if } \mathbb{X}^n = \mathbb{H}^n.
\end{cases}   
\end{equation}
\begin{equation}
 \cos_X(x) = \begin{cases}
    1, & \quad \text{if } \mathbb{X}^n = \mathbb{E}^n, \\
    \cos x, & \quad \text{if } \mathbb{X}^n = \mathbb{S}^n, \\
    \cosh x, & \quad \text{if } \mathbb{X}^n = \mathbb{H}^n.
\end{cases}   
\end{equation}
With this convention, we can write
\begin{equation}
 \cos_X d\left([\vc{a}], [\vc{b}]\right) = \frac{\delta_X \langle \vc{a}, \vc{b} \rangle}{\sqrt{\langle \vc{a}, \vc{a} \rangle \langle \vc{b}, \vc{b} \rangle}}, \quad \text{if } \mathbb{X}^n = \mathbb{S}^n \text{ or } \mathbb{H}^n,
\end{equation}
or
\begin{equation}\label{f9}
 m_{\vc{a}} m_{\vc{b}} \cos_X d\left([\vc{a}], [\vc{b}]\right) = \delta_X \langle \vc{a}, \vc{b} \rangle, \quad \text{if } \mathbb{X}^n = \mathbb{S}^n \text{ or } \mathbb{H}^n,
\end{equation}

We now introduce the well-known parametrization for each of the spaces. By an abuse of notation, we denote the parametrization for each space of $\mathbb{S}^n$ and $\mathbb{H}^n$ using a single notation:
\[
(x_1, \dots, x_{n+1})^T = \Phi^n(\varphi_1, \dots, \varphi_n).
\]
For $\mathbb{X}^n = \mathbb{S}^n$ or $\mathbb{H}^n$, respectively,
\begin{equation*}
  \begin{cases}
  x_1 = \sin \varphi_1 \cdots \sin \varphi_{n-1} \sin \varphi_n, \\
  x_2 = \sin \varphi_1 \cdots \sin \varphi_{n-1} \cos \varphi_n, \\
  \vdots \\
  x_{n-1} = \sin \varphi_1 \sin \varphi_2 \cos \varphi_3, \\
  x_n = \sin \varphi_1 \cos \varphi_2, \\
  x_{n+1} = \cos \varphi_1.
  \end{cases}
  \quad
  \begin{cases}
  x_1 = \sinh \varphi_1 \sin \varphi_2 \cdots \sin \varphi_{n-1} \sin \varphi_n, \\
  x_2 = \sinh \varphi_1 \sin \varphi_2 \cdots \sin \varphi_{n-1} \cos \varphi_n, \\
  \vdots \\
  x_{n-1} = \sinh \varphi_1 \sin \varphi_2 \cos \varphi_3, \\
  x_n = \sinh \varphi_1 \cos \varphi_2, \\
  x_{n+1} = \cosh \varphi_1.
  \end{cases}
\end{equation*}
For $\mathbb{X}^n = \mathbb{E}^n$,
\begin{equation*}
  \Phi^n(x_1, \dots, x_n) = (x_1, \dots, x_n, x_{n+1})^T = (x_1, \dots, x_n, 1)^T.
\end{equation*}
The volume element $\dd V_n$ is given by
\[
\dd V_n =
\begin{cases}
  \dd{x_1} \cdots \dd{x_n}, & \text{if } \mathbb{X}^n = \mathbb{E}^n, \\
  \sin^{n-1} \varphi_1 \sin^{n-2} \varphi_2 \cdots \sin \varphi_{n-1} \dd{\varphi_1} \cdots \dd{\varphi_n}, & \text{if } \mathbb{X}^n = \mathbb{S}^n, \\
  \sinh^{n-1} \varphi_1 \sin^{n-2} \varphi_2 \cdots \sin \varphi_{n-1} \dd{\varphi_1} \cdots \dd{\varphi_n}, & \text{if } \mathbb{X}^n = \mathbb{H}^n.
\end{cases}
\]

Later, we will mention these parametrizations, but we don't need specific expressions. These parametrizations are regular except when $\mathbb{X}^n = \mathbb{S}^n$. In the spherical case, the parametrization has a non-empty set of singular points. This does not affect the calculation of volumes of submanifolds.

Let $M^k$ be a $k$-dimensional manifold in $\mathbb{X}^n$, and let $\vc{r}(s_0, s_1, \dots, s_{k-1})$ be an almost everywhere regular parametrization of $M^k$. Then, the \emph{volume element} $\dd{V}_k$ is given by
\[
\dd{V}_k = \sqrt{\det\left\{ (D\vc{r})^T J_{n+1} D\vc{r} \right\}} \, \dd{s_0} \dd{s_1} \cdots \dd{s_{k-1}},
\]
where $D\vc{r}$ denotes the Jacobian matrix
\[
\frac{\partial \vc{r}}{\partial (s_0, \dots, s_{k-1})} = \begin{pmatrix} \vc{r}_{s_0} & \vc{r}_{s_1} & \cdots & \vc{r}_{s_{k-1}} \end{pmatrix}.
\]

\subsection{Totally geodesic submanifold, hyperplane in spherical and hyperbolic space and  parametrization}

It is well known that the intersection of a \((k+1)\)-dimensional subspace of the ambient space \(\mathbb{R}^{n+1}\) with the space \(\mathbb{X}^n\) results in a \(k\)-dimensional \emph{totally geodesic submanifold} of \(\mathbb{X}^n\). A \emph{geodesic} is a 1-dimensional totally geodesic submanifold, and it is the intersection of a 2-dimensional subspace of the ambient space \(\mathbb{R}^{n+1}\) with the space \(\mathbb{X}^n\). A \emph{hyperplane} in \(\mathbb{X}^n\) is a totally geodesic submanifold of codimension 1. A hyperplane is the intersection of an \(n\)-dimensional subspace of the ambient space \(\mathbb{R}^{n+1}\) with the space \(\mathbb{X}^n\). We define the unit vector orthogonal to this subspace as the \emph{polar vector} of the hyperplane. The polar vector exhibits a sign ambiguity. A polar vector is a 
positive vector, and every positive vector corresponds to a unique hyperplane.

From now until the end of this subsection, we will assume that the space \(\mathbb{X}^n\) refers to either \(\mathbb{S}^n\) or \(\mathbb{H}^n\). 

Let \(\vc{c}\) be a point in \(\mathbb{X}^n\), and let \(\vc{p}\) be a vector orthogonal to \(\vc{c}\). Then, \(\vc{p}\) is a positive vector, satisfying the following conditions:
\[
\langle \vc{p}, \vc{c} \rangle = 0, \quad \langle \vc{p}, \vc{p} \rangle > 0.
\]

Let \(\mathbb{X}^{n-1}_{\vc{p}, \vc{c}}\) denote the hyperplane in \(\mathbb{X}^n\) that passes through \(\vc{c}\) and has \(\vc{p}\) as a polar vector. Since a polar vector uniquely determines a hyperplane, the notation \(\mathbb{X}^{n-1}_{\vc{p}, \vc{c}}\) is, in fact, redundant. However, to emphasize that this hyperplane passes through \(\vc{c}\), we retain the point \(\vc{c}\) in the notation.

\subsubsection{Tangent vector and slant angle of hyperplane}
Let $\vc{c}$ be a point in $\mathbb{X}^n$. The \emph{tangent plane} to $\mathbb{X}^n$ at $\vc{c}$ is the $n$-dimensional plane in $\mathbb{R}^{n+1}$ that is tangent to $\mathbb{X}^n$. This tangent plane is nothing other than the tangent space $\displaystyle T_{\vc{c}}\mathbb{X}^n\subset \mathbb{R}^{n+1}$. Tangent vectors can be regarded as vectors in the ambient space $\mathbb{R}^{n+1}$. If $\mathbb{X}^n=\mathbb{E}^n$, any tangent plane is the space $\mathbb{E}^n$ itself. It is clear that
\[
\displaystyle T_{\vc{c}}\mathbb{X}^n=
\begin{cases}
  \left\{\mathbb{X}\in\mathbb{R}^{n+1}\,|\, x_{n+1}=1\right\}, & \mbox{if } \mathbb{X}^n=\mathbb{E}^n, \\
  \left\{\mathbb{X}\in\mathbb{R}^{n+1}\,|\, \langle\mathbb{X},\vc{c}\rangle=\delta_{X}\right\}, & \mbox{if } \mathbb{X}^n=\mathbb{S}^n \text{ or }\mathbb{H}^n.
\end{cases}
\]

A \emph{tangent vector} $\vc{v}$ at $\vc{c}$ in the tangent space $T_{\vc{c}}\mathbb{X}^n$ is of the form $\vc{v}=\mathbb{X}-\vc{c}$ for some element $\mathbb{X}$ in the tangent plane at $\vc{c}$. \\
When $\mathbb{X}^n=\mathbb{S}^n \text{ or }\mathbb{H}^n$, it is clear that
\[
\vc{v}\in T_{\vc{c}}\mathbb{X}^n \quad\text{if and only if}\quad \langle\vc{v},\vc{c}\rangle=0.
\]
In particular, when $\mathbb{X}^n=\mathbb{S}^n \text{ or }\mathbb{H}^n$, a polar vector $\vc{p}$ of a hyperplane $\mathbb{X}^{n-1}_{\vc{p},\vc{c}}$ is regarded as a unit tangent vector in $T_{\vc{c}}\mathbb{X}^n$ and is regarded also as a unit normal vector to $T_{\vc{c}}\mathbb{X}^{n-1}_{\vc{p},\vc{c}}$.

Let $\vc{v}$ and $\vc{w}$ be tangent vectors in $T_{\vc{c}}\mathbb{X}^n$. The form $\langle~,~\rangle$ restricted to $T_{\vc{c}}\mathbb{X}^n$ is positive definite and the angle $\theta$ between $\vc{v}$ and $\vc{w}$ is given by
\[
\cos\theta=\frac{\langle\vc{v},\vc{w}\rangle} {\sqrt{\langle\vc{v},\vc{v}\rangle\langle\vc{w},\vc{w}\rangle}},
\quad 0\leq\theta\leq\pi.
\]

Consider a unit tangent vector $\vc{t}\in T_{\vc{c}}\mathbb{X}^n$ and a hyperplane $\mathbb{X}^{n-1}_{\vc{p},\vc{c}}$. The \emph{slant angle} of the hyperplane $\mathbb{X}^{n-1}_{\vc{p},\vc{c}}$ \emph{with respect to} $\vc{t}$ is defined as the angle between the vector $\vc{t}$ and the normal vector of $T_{\vc{c}}\mathbb{X}^n$.\\
When $\mathbb{X}^n=\mathbb{S}^n \text{ or }\mathbb{H}^n$, the slant angle $\theta$ satisfies that
\begin{equation}\label{f10}
\cos\theta
=\frac{\left|\langle\vc{t},\vc{p}\rangle\right|} {\sqrt{\langle\vc{t},\vc{t}\rangle\langle\vc{p},\vc{p}\rangle}}
=\left|\langle\vc{t},\vc{p}\rangle\right|,
\quad 0\leq\theta\leq\frac{\pi}{2}.
\end{equation}

Let $C$ be a regular curve in $\mathbb{X}^{n}$ . Consider a hyperplane that intersects $C$. We mean the \emph{slant angle} of this hyperplane \emph{with respect to the curve} $C$ by the slant angle of this hyperplane with respect to the unit tangent vector to this curve at the intersection point.

\subsubsection{Orthonormal basis and parametrization of hyperplane}
Consider a hyperplane $\mathbb{X}^{n-1}_{\vc{p},\vc{c}}$. Let the set of vectors $\{\vc{n}_1,\dots,\vc{n}_{n-1}\}$ be an orthonormal basis for the orthogonal complement $\text{Span}\{\vc{p},\vc{c}\}^{\perp}$ in $\mathbb{R}^{n+1}$. Then we can easily check the facts below.
\begin{itemize}
\item The set of vectors
\[
\{\vc{p},\vc{n}_1,\dots,\vc{n}_{n-1},\vc{c}\}
\]
is an orthonormal basis for the tangent space $T_{\vc{c}}\mathbb{R}^{n+1}\approx\mathbb{R}^{n+1}$.

\item All of the vectors $\vc{p},\vc{n}_1,\dots,\vc{n}_{n-1}$ are positive.

\item The set of vectors
\[
\{\vc{p},\vc{n}_1,\dots,\vc{n}_{n-1}\}
\]
is an orthonormal basis for the tangent space $T_{\vc{c}}\mathbb{X}^{n}$.

\item Since the hyperplane $\mathbb{X}^{n-1}_{\vc{p},\vc{c}}$ is the intersection $\vc{p}^{\perp}\cap\mathbb{X}^{n}$,
\begin{equation}\label{}
\mathbb{X}^{n-1}_{\vc{p},\vc{c}}=\text{Span}\{\vc{n}_1,\dots,\vc{n}_{n-1},\vc{c}\}\cap\mathbb{X}^{n}.
\end{equation}

\item The set of vectors
\[
\{\vc{n}_1,\dots,\vc{n}_{n-1}\}
\]
is an orthonormal basis for the tangent space $T_{\vc{c}}\mathbb{X}^{n-1}_{\vc{p},\vc{c}}$.
\end{itemize}

Any hyperplane $\mathbb{X}^{n-1}_{\vc{p},\vc{c}}\subset\mathbb{X}^{n}$ is isomorphic to $\mathbb{X}^{n-1}$. Making use of the parametrization $(x_1,\dots,x_{n})^T=\Phi^{n-1}(\varphi_1,\dots,\varphi_{n-1})$ of the space $\mathbb{X}^{n-1}\subset\mathbb{R}^n$, we obtain a parametrization of $\mathbb{X}^{n-1}_{\vc{p},\vc{c}}$:
\begin{equation}\label{eqn:X^{n-1}(p,c)}
  \vc{r}(\varphi_1,\dots,\varphi_{n-1})
  =\big(\vc{p}~~\vc{n}_1~\cdots~\vc{n}_{n-1}~~\vc{c}\big)
  \begin{bmatrix}
    0 \\ \Phi^{n-1}(\varphi_1,\dots,\varphi_{n-1})
  \end{bmatrix}.
\end{equation}
Let the matrix $F$ be defined as
\begin{equation}\label{}
  F:=\big(\vc{p}~~\vc{n}_1~\cdots~\vc{n}_{n-1}~~\vc{c}\big).
\end{equation}
Note that the matrix $F$ is an orthogonal matrix:
\begin{equation}\label{}
F^TJ_{n+1}F=J_{n+1}.
\end{equation}
The parametrization (\ref{eqn:X^{n-1}(p,c)}) is rewritten as
\begin{equation}\label{}
  \vc{r}(\varphi_1,\dots,\varphi_{n-1})
  =F\,\begin{bmatrix}
    0 \\ \Phi^{n-1}(\varphi_1,\dots,\varphi_{n-1})
  \end{bmatrix}.
\end{equation}

\section{Mass Centers of Finite Collection of Points}\label{section:mass nenters of finite collection of points}

\subsection{Material Points and Mass Center System}

See Figures \ref{efig1} and \ref{efig2} again. A \textbf{material point} in $\mathbb{X}^{n}$ is a point which has a non-negative mass. Let $\vc{r}$ be a material point in $\mathbb{X}^{n}$ with mass $m$. We represent this massed point by $(\vc{r},m)\in\mathbb{X}^{n}\times\mathbb R^{+}$.

Consider the correspondence between $(\vc{r},m)\in\mathbb{X}^{n}\times\mathbb R^{+}$ with $\mathbb{R}^{+}\mathbb{X}^{n}$ given by
\begin{equation}\label{1-1 correspondence}
  (\vc{r},m)\quad \longrightarrow\quad m\vc{r}.
\end{equation}
Using this correspondence, we represent the material point $(\vc{r},m)$ by the vector
\[
m\vc{r}\in\mathbb R^{+}\mathbb{X}^{n}.
\]
Conversely, the vector 
\[
\vc{a}=m_{\vc{a}}[\vc{a}]\in\mathbb{R}^{+}\mathbb{X}^{n}\setminus\{\vcz\}
\]
represents the material point $[\vc{a}]$ with mass $m_{\vc{a}}$. In this manner, we call each vector $\vc{a}$ in $\mathbb R^{+}\mathbb{X}^{n}$ a \textbf{material vector} representing the material point $[\vc{a}]$ with mass $m_{\vc{a}}$. By convention, the zero material vector $\vcz$ represents any point with \emph{zero mass}:
\begin{equation}
    \vcz=0[\vc{a}].
\end{equation}

By definition, a \textbf{mass center system for finite material points} on $\mathbb{X}^{n}$ is an assignment
\[
\mathcal{F}\,:\,\text{each of all finite collection of material points}\mapsto\text{a material point}
\]
or, equivalently,
\[
\mathcal{F}\,:\,\text{each of all finite collection of material vectors}\mapsto\text{a material vector}
\]
which satisfies the \emph{system of axioms} listed below: For notational convenience, we denote
\[
\mathcal{F}\{\vc{a}_1,\cdots,\vc{a}_N\}=:\vc{a}_1\oplus\cdots\oplus\vc{a}_N.
\]
Now, we list the \emph{system of axioms}.

\begin{itemize}

\item \emph{axiom of zero mass}: All points with zero mass are equal to one another.
\item \emph{axiom of single point}\footnote{Galperin called this axiom as the \emph{axiom of immovability}.}: For all material vector $\vc{a}$,
\[
\mathcal{F}(\vc{a})=\vc{a}.
\]
\item \emph{axiom of overlapping}\footnote{We altered the system of axioms from that of Galperin. Galperin did not list this axiom but listed instead the \emph{axiom of multiplication} which is  Proposition \ref{prop:Rule of multiplication} in this article.}: For all non-negative numbers $r_1, r_2$ and for all material vector $\vc{a}$,
\[
r_1\vc{a}\oplus r_2\vc{a}=(r_1+r_2)\vc{a}
\]
\item \emph{axiom of permutation and partition}: For all finite collection $\{\vc{a}_1,\cdots,\vc{a}_N\}$ of material vectors, for all integer $m$ with $1\leq m\leq N$, and for all permutation $\sigma$,
    \[
    \vc{a}_1\oplus\cdots\oplus\vc{a}_N
    =(\vc{a}_{\sigma(1)}\oplus\cdots\oplus\vc{a}_{\sigma(m)})
    \oplus(\vc{a}_{\sigma(m+1)}\oplus\cdots\oplus\vc{a}_{\sigma(N)}).
    \]
\item \emph{axiom of isometry invariance}: Let $g$ be an isometry on $\mathbb{X}^{n}$ and let $\vc{a}=m_{\vc{a}}[\vc{a}]$ be a material vector in $\mathbb{R}^{+}\mathbb{X}$. Define
    \[
    g(\vc{a})=m_{\vc{a}} g([\vc{a}]) \quad\text{and}\quad g(\vcz)=\vcz.
    \]
    Then, for all finite collection $\{\vc{a}_1,\cdots,\vc{a}_N\}$ of material vectors,
    \[
    g(\vc{a}_1)\oplus\cdots\oplus g(\vc{a}_N)
    =g(\vc{a}_1\oplus\cdots\oplus\vc{a}_N).
    \]


\item \emph{axiom of continuity}: Let $\vc{a}(t)$ and $\vc{b}(t)$ are continuously parametrized material vectors in $\mathbb{R}^{+}\mathbb{X}^{n}$ with respect to the usual topology on $\mathbb{R}^{n+1}$. Then the parametrized material vector
    \[
    \vc{a}(t)\oplus\vc{b}(t)
    \]
    is continuous as well.
\end{itemize}

We will show the uniqueness of the mass center system. By definition, we call the material vector
\[
\vc{a}_1\oplus\cdots\oplus\vc{a}_N
\]
the \textbf{mass center vector} of $\{\vc{a}_1,\,\dots,\,\vc{a}_N\}$ and we call the point
\[
[\vc{a}_1\oplus\cdots\oplus\vc{a}_N]
\]
a \textbf{mass center} of $\{\vc{a}_1,\,\dots,\,\vc{a}_N\}$. The mass of the mass center vector $\vc{a}_1\oplus\cdots\oplus\vc{a}_N$ is called, by definition,  the \textbf{centered mass} of $\{\vc{a}_1,\,\dots,\,\vc{a}_N\}$ whereas we call the sum of all masses the \textbf{total mass} of $\{\vc{a}_1,\,\dots,\,\vc{a}_N\}$.

Once the uniqueness is shown, it turns out that the notions such as mass center vector, mass center, and centered mass depend only on the relevant set $S$ and the geometry of $\mathbb{X}^n$.
We denote the centered mass and the total mass of the set $S=\{\vc{a}_1,\,\dots,\,\vc{a}_N\}$, respectively, as
\[
m_{\text{cen}}(S) \quad\text{and}\quad m_{\text{tot}}(S).
\]

In the rest of this chapter, we will show that there is a unique system of mass centers for each of $\mathbb{E}^n$, $\mathbb{S}^n$, and $\mathbb{H}^n$ when $n\geq2$.

\subsection{Basic Propositions}

\begin{prop}\label{identity}
\emph{(Rule of identity)}
 For all material vector $\vc a$,
\[
\vc{a}\oplus \vc {0}=\vc {0} \oplus \vc{a}=\vc{a}.
\]
\end{prop}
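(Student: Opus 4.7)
The plan is to derive the identity rule directly from the \emph{axiom of overlapping}, after rewriting $\vcz$ in a form to which the overlapping axiom applies. The crucial observation is that the \emph{axiom of zero mass} allows us to represent $\vcz$ as $0\cdot[\vc{r}]$ for any choice of point $[\vc{r}]\in\mathbb{X}^n$. Once we have this freedom, we can match the base point of $\vcz$ to that of $\vc{a}$ and collapse the pair using overlapping.

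Concretely, first suppose $\vc{a}\neq\vcz$, so $\vc{a}=m_{\vc{a}}[\vc{a}]$ with $m_{\vc{a}}>0$. Using the axiom of zero mass, I will write $\vcz=0\cdot[\vc{a}]$. Then the axiom of overlapping with $r_1=m_{\vc{a}}$, $r_2=0$ yields
\[
\vc{a}\oplus\vcz \;=\; m_{\vc{a}}[\vc{a}]\oplus 0\cdot[\vc{a}] \;=\; (m_{\vc{a}}+0)[\vc{a}] \;=\; \vc{a}.
\]
For the degenerate case $\vc{a}=\vcz$, I pick any $[\vc{r}]\in\mathbb{X}^n$, write $\vcz=0\cdot[\vc{r}]$ twice, and apply overlapping with $r_1=r_2=0$ to get $\vcz\oplus\vcz=0\cdot[\vc{r}]=\vcz$.

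To obtain commutativity $\vcz\oplus\vc{a}=\vc{a}\oplus\vcz$, I will invoke the axiom of permutation and partition with $N=2$, $m=1$, and the transposition $\sigma=(1\,2)$, which gives
\[
\vc{a}\oplus\vcz \;=\; \mathcal{F}(\vcz)\oplus\mathcal{F}(\vc{a}) \;=\; \vcz\oplus\vc{a},
\]
where the last equality uses the axiom of single point to strip the inner parentheses. Combining with the previous step, both $\vc{a}\oplus\vcz$ and $\vcz\oplus\vc{a}$ equal $\vc{a}$.

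I do not anticipate a genuine obstacle here; the only subtle point is recognizing that the axiom of zero mass is precisely what licenses the identification $\vcz=0\cdot[\vc{a}]$, so that the overlapping axiom (which requires both summands to share a base point) becomes applicable. Everything else is bookkeeping.
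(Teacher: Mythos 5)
Your proof is correct and follows essentially the same route as the paper: both use the \emph{axiom of zero mass} to write $\vcz=0[\vc{a}]$ and then apply the \emph{axiom of overlapping} to get $\vc{a}\oplus\vcz=(m_{\vc{a}}+0)[\vc{a}]=\vc{a}$. Your extra care with the degenerate case $\vc{a}=\vcz$ and the explicit use of the permutation axiom for the reversed order are harmless refinements of what the paper dismisses with ``similarly.''
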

\begin{proof}
By the \emph{axiom of zero mass}, the zero vector $\vcz$ can be expressed as $0[\vc{a}]$. From the \emph{axiom of overlapping}, it follows that  
\[
\vc{a}\oplus \vc{0}=m_{\vc{a}}[\vc{a}]\oplus 0[\vc{a}]=(m_{\vc{a}}+0)[\vc{a}]=\vc{a}.
\]
Similarly, $\vc{0}\oplus \vc{a}=\vc{a}$ can be derived in the same manner.
\end{proof}
\begin{prop}\label{prop:Rule of multiplication}
\emph{(Rule of multiplication\footnote{In fact, the system of axioms that the \emph{axiom of overlapping} is substituted by the \emph{Rule of multiplication} is the same system to the former system.})}
 For all $r>0$,
\[
r\vc{a}\oplus r\vc{b}=r(\vc{a}\oplus\vc{b}).
\]
\end{prop}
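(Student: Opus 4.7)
The plan is a standard three-step scheme: establish the identity first for positive integer multipliers, then for positive rationals, and finally for all positive reals via the axiom of continuity. The key observation is that the axiom of overlapping together with the axiom of permutation and partition already provides enough flexibility to multiply mass centers by positive integers.

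\textbf{Step 1 (positive integers).} By induction from the axiom of overlapping applied to $\vc{a}=m_{\vc{a}}[\vc{a}]$ (with arbitrary parenthesization justified by the axiom of permutation and partition), $\underbrace{\vc{a}\oplus\cdots\oplus\vc{a}}_{n}=n\vc{a}$ for every $n\in\mathbb{Z}^{+}$. Applying this identity with $\vc{a}\oplus\vc{b}$ in place of $\vc{a}$ gives
\[
\underbrace{(\vc{a}\oplus\vc{b})\oplus\cdots\oplus(\vc{a}\oplus\vc{b})}_{n}=n(\vc{a}\oplus\vc{b}).
\]
On the other hand, the axiom of permutation and partition lets me regroup the same $2n$ summands as
\[
\bigl(\underbrace{\vc{a}\oplus\cdots\oplus\vc{a}}_{n}\bigr)\oplus\bigl(\underbrace{\vc{b}\oplus\cdots\oplus\vc{b}}_{n}\bigr)=n\vc{a}\oplus n\vc{b},
\]
so $n\vc{a}\oplus n\vc{b}=n(\vc{a}\oplus\vc{b})$.

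\textbf{Step 2 (positive rationals).} For $r=p/q$ with $p,q\in\mathbb{Z}^{+}$, apply Step 1 with multiplier $q$ to the vectors $r\vc{a}$ and $r\vc{b}$:
\[
q(r\vc{a})\oplus q(r\vc{b})=q\bigl(r\vc{a}\oplus r\vc{b}\bigr).
\]
Since $q(r\vc{a})=p\vc{a}$ and $q(r\vc{b})=p\vc{b}$, the left side equals $p\vc{a}\oplus p\vc{b}=p(\vc{a}\oplus\vc{b})$ by Step 1 again. Cancelling the positive scalar $q$, which is legitimate via the unique decomposition $\vc{v}=m_{\vc{v}}[\vc{v}]$, yields $r\vc{a}\oplus r\vc{b}=r(\vc{a}\oplus\vc{b})$.

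\textbf{Step 3 (positive reals).} Given an arbitrary $r>0$, take a sequence of positive rationals $r_k\to r$. The axiom of continuity applied to the continuous families $r_k\vc{a}$ and $r_k\vc{b}$ in $\mathbb{R}^{+}\mathbb{X}^n$ gives $r_k\vc{a}\oplus r_k\vc{b}\to r\vc{a}\oplus r\vc{b}$, while $r_k(\vc{a}\oplus\vc{b})\to r(\vc{a}\oplus\vc{b})$ by ordinary continuity of scalar multiplication in $\mathbb{R}^{n+1}$. Step 2 guarantees equality along the sequence, so the limits coincide. No step presents a serious obstacle; the only minor points requiring care are the induction underlying Step 1 and the scalar-cancellation step, both of which are immediate once the unique decomposition $\vc{v}=m_{\vc{v}}[\vc{v}]$ is kept in mind.
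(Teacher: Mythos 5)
Your proof is correct and follows essentially the same route as the paper: both establish the identity for rational multipliers by overlapping $q$ (resp.\ $k$) copies, regrouping via the axiom of permutation and partition, collapsing repeated copies with the axiom of overlapping, cancelling the integer scalar, and then passing to all positive reals by the axiom of continuity. The only cosmetic difference is that you separate the integer and rational cases into two steps, whereas the paper handles $r=l/k$ in a single chain of equalities.
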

\begin{proof}
By the \emph{axiom of continuity}, it suffices to consider the case where $r$ is a rational number. Let $r = \dfrac{l}{k}$, where $k$ and $l$ are positive integers.
Overlap the material vector $ \dfrac{l}{k}\vc{a}\oplus \dfrac{l}{k}\vc{b} $ $k$-times. Then, by the \emph{axiom of permutation and partition} and the \emph{axiom of overlapping}, we obtain $l(\vc{a}\oplus\vc{b})$
\begin{align*}
  k\left(\frac{l}{k}\vc{a}\oplus\frac{l}{k}\vc{b}\right)
&=\left(\frac{l}{k}\vc{a}\oplus\frac{l}{k}\vc{b}\right)
\oplus\dots\oplus\left(\frac{l}{k}\vc{a}\oplus\frac{l}{k}\vc{b}\right)
\\
&=\left(\frac{l}{k}\vc{a}\oplus\dots\oplus\frac{l}{k}\vc{a}\right)
\oplus\left(\frac{l}{k}\vc{b}\oplus\dots\oplus\frac{l}{k}\vc{b}\right)
\\
&=l\vc{a}\oplus l\vc{b}
\\
&=(\vc{a}\oplus\dots\oplus\vc{a})\oplus(\vc{b}\oplus\dots\oplus\vc{b})
\\
&=(\vc{a}\oplus\vc{b})\oplus\dots\oplus(\vc{a}\oplus\vc{b})\
\\
&=l(\vc{a}\oplus\vc{b}).
\end{align*}
Hence, we have
\[
\frac{l}{k}\vc{a}\oplus\frac{l}{k}\vc{b}=\dfrac{l}{k}(\vc{a}\oplus\vc{b}).
\]
\end{proof}

\begin{prop}]\label{prop33}
Let $\vc{a}$ and $\vc{b}$ be material vectors with positive masses. Then
\begin{equation*}
\vc{a}\oplus\vc{b}=\vcz \quad\text{if and only if}\quad\mathbb{X}^{n}=\mathbb{S}^n\ \text{and}\ \ \vc{a}+\vc{b}=\vcz.
\end{equation*}
\end{prop}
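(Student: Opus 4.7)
The plan is to use the axioms directly, splitting on a symmetry dichotomy governed by the point stabilizer of $[\vc a]$. For the ``if'' direction, I would assume $\mathbb{X}^n=\mathbb{S}^n$ and $\vc a+\vc b=\vcz$; positivity of the masses forces $m_{\vc a}=m_{\vc b}$ and $[\vc a]=-[\vc b]$. Any rotation $g$ around the $(\pm[\vc a])$-axis fixes both $\vc a$ and $\vc b$, hence, by the axiom of isometry invariance, fixes $\vc a\oplus\vc b$. If $\vc a\oplus\vc b\ne\vcz$, then $[\vc a\oplus\vc b]$ would be a common fixed point of every such rotation and, since $n\ge 2$, must lie in $\{[\vc a],-[\vc a]\}$. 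I would then introduce an isometry $\tau$ swapping $[\vc a]$ with $-[\vc a]$: by construction $\tau(\vc a)=\vc b$ and $\tau(\vc b)=\vc a$, so the axioms of isometry invariance and of permutation together force $\tau$ to fix $\vc a\oplus\vc b$, contradicting the fact that $\tau$ moves both $[\vc a]$ and $-[\vc a]$. Hence $\vc a\oplus\vc b=\vcz$.

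For the ``only if'' direction, assume $\vc a\oplus\vc b=\vcz$. The key trick is: if some isometry $g$ fixes $[\vc a]$ while carrying $[\vc b]$ to a different point $[\vc b']$, then by isometry invariance $\vc a\oplus\vc b'=\vcz$ as well, and the axiom of permutation and partition gives
\[
\vc b=\vcz\oplus\vc b=(\vc a\oplus\vc b')\oplus\vc b=(\vc a\oplus\vc b)\oplus\vc b'=\vcz\oplus\vc b'=\vc b',
\]
a contradiction. Hence $[\vc b]$ must lie in the full fixed set of the point stabilizer of $[\vc a]$. Since $n\ge 2$, this fixed set is $\{[\vc a]\}$ in $\mathbb{E}^n$ and $\mathbb{H}^n$, and $\{[\vc a],-[\vc a]\}$ in $\mathbb{S}^n$. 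The case $[\vc b]=[\vc a]$ is ruled out immediately by the axiom of overlapping, which gives $\vc a\oplus\vc b=(m_{\vc a}+m_{\vc b})[\vc a]\ne\vcz$. This eliminates the Euclidean and hyperbolic cases entirely and leaves only the spherical case with $[\vc b]=-[\vc a]$.

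It remains to show $m_{\vc a}=m_{\vc b}$ in this antipodal spherical case. WLOG $m_{\vc a}\ge m_{\vc b}$; splitting $\vc a$ via overlapping and then regrouping via permutation/partition yields
\[
\vc a\oplus\vc b=\bigl(m_{\vc b}[\vc a]\oplus m_{\vc b}[-\vc a]\bigr)\oplus(m_{\vc a}-m_{\vc b})[\vc a],
\]
and by the ``if'' direction already proved, the first parenthesis equals $\vcz$. Thus $\vc a\oplus\vc b=(m_{\vc a}-m_{\vc b})[\vc a]$, which is $\vcz$ iff $m_{\vc a}=m_{\vc b}$, i.e.\ iff $\vc a+\vc b=\vcz$. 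The only genuinely delicate point, and the main obstacle, is isolating the stabilizer trick in the ``only if'' direction; once one sees that the partition axiom forces $\vc b=\vc b'$ as soon as two $\oplus$-collapses to $\vcz$ coexist, the remainder is a short inspection of the isotropy groups of the three geometries.
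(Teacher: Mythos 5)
Your proof is correct and follows essentially the same route as the paper's: your ``only if'' direction is the paper's argument with the single involution $\iota_{\vc{a}}$ generalized to the full point stabilizer of $[\vc{a}]$ (the chain $\vc{b}=(\vc{a}\oplus\vc{b}')\oplus\vc{b}=(\vc{a}\oplus\vc{b})\oplus\vc{b}'=\vc{b}'$ is exactly the paper's computation of $\vc{a}\oplus\vc{b}\oplus\iota_{\vc{a}}(\vc{b})$ in two ways), and the terminal mass-splitting step giving $m_{\vc{a}}=m_{\vc{b}}$ is identical. The only real divergence is the ``if'' direction: the paper applies the antipodal map $\iota_0$, which swaps $\vc{a}$ and $\vc{b}$ and is fixed-point-free on $\mathbb{S}^n$, so the conclusion $\vc{a}\oplus\vc{b}=\vcz$ is immediate in one step and in every dimension, whereas your rotation-plus-swap argument genuinely requires $n\geq 2$ --- a restriction worth removing, since the paper later invokes precisely this implication for antipodal points on $\mathbb{S}^1$.
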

\begin{proof}
Assume that $\mathbb{X}^{n}=\mathbb{S}^n$ and $\vc{a}$, $\vc{b}$ are material vectors satisfying that $\vc{a}+\vc{b}=\vcz$. Then $\vc{a}=m[\vc{a}]$ and $\vc{b}=m(-[\vc{a}])$.
Let $\iota_0$ be the antipodal map on $\vc S^n$. It is clear that $\iota_0$ is an isometry on $\vc S^n$ and
\begin{align*}
&\iota_0(\vc{a})=\iota_0(m[\vc{a}])=m\iota_0([\vc{a}])=m(-[\vc{a}])=\vc{b},\\
&\iota_0(\vc{b})=\iota_0(m(-[\vc{a}]))=m\iota_0(-[\vc{a}])=m[\vc{a}]=\vc{a}.
\end{align*}
By the \emph{axiom of isometry invariance} and the \emph{axiom of permutation and partition}, we have
\[
\iota_0(\vc{a}\oplus\vc{b})=\iota_0(\vc{a})\oplus\iota_0(\vc{b})
=\vc{b}\oplus\vc{a}=\vc{a}\oplus\vc{b}.
\]
If the material point $\vc{a}\oplus\vc{b}$ has a positive mass, it cannot be invariant under $\iota_0$. Hence, we conclude that
\[
\vc{a}\oplus\vc{b}=\vcz.
\]
Conversely, assume that $\vc{a}$, $\vc{b}$ are material points satisfying that
\[
\vc{a}\oplus\vc{b}=\vcz.
\]
Let $\iota_{\vc{a}}$ be the involution in $[\vc{a}]$. By the \emph{Axiom of isometry invariance},
\[
\vc{a}\oplus\iota_{\vc{a}}(\vc{b})
=\iota_{\vc{a}}(\vc{a})\oplus\iota_{\vc{a}}(\vc{b})
=\iota_{\vc{a}}(\vc{a}\oplus\vc{b})
=\vcz.
\]
Then, by the \emph{axiom of permutation and partition},
\[
\vc{a}\oplus\vc{b}\oplus\iota_{\vc{a}}(\vc{b})
=\begin{cases}
   (\vc{a}\oplus\vc{b})\oplus\iota_{\vc{a}}(\vc{b})
   =\vcz\oplus\iota_{\vc{a}}(\vc{b})
   =\iota_{\vc{a}}(\vc{b}), \\
   \quad\text{\small{and simultaneously}} \\
   (\vc{a}\oplus\iota_{\vc{a}}(\vc{b}))\oplus\vc{b}
   =\vcz\oplus\vc{b}
   =\vc{b}.
 \end{cases}
\]
Thus $\iota_{\vc{a}}(\vc{b})=\vc{b}$. The point $[\vc{b}]$ is fixed under the involution $\iota_{\vc{a}}$. Hence, the point $[\vc{b}]$ is either $[\vc{a}]$ or, with $\mathbb{X}^{n}=\vc S^n$, the antipodal point of $[\vc{a}]$. If $[\vc{b}]=[\vc{a}]$, then $\vc{a}\oplus\vc{b}=(m_{\vc{a}}+m_{\vc{b}})[\vc{a}]\neq\vcz$. This is a contradiction. Hence, we have $\mathbb{X}^n=\mathbb{S}^n$, $[\vc{b}]=[-\vc{a}]$, and we can get the following with a natural assumption $m_{\vc{a}}\ge m_{\vc{b}}$:
\[
\vc{a}\oplus\vc{b}=(m_{\vc{a}}-m_{\vc{b}})[\vc{a}]\oplus m_{\vc{b}}[\vc{a}]\oplus m_{\vc{b}}[\vc{b}]=(m_{\vc{a}}-m_{\vc{b}})[\vc{a}]\oplus\vc{0}=(m_{\vc{a}}-m_{\vc{b}})[\vc{a}]=\vcz.
\]
Therefore we obtain $m_{\vc{a}}=m_{\vc{b}}$ and conclude that $\vc{a}+\vc{b}=\vcz$.
\end{proof}

For spherical spaces, the concept of a midpoint requires clarification, as the geodesic segment connecting two points is not unique. For a pair of antipodal points, we define that the midpoint does not exist. For non-antipodal points, there are two geodesic segments connecting them: one shorter and one longer. By definition, the midpoint is taken to be the point on the shorter segment. From this point onward, we will refer to the geodesic segment as the shorter one.

It can be easily verified that if $\vc{a} = m[\vc{a}]$ and $\vc{b} = m[\vc{b}]$ are material vectors with equal mass, then the midpoint between $[\vc{a}]$ and $[\vc{b}]$ is given by 
\[
[\vc{a} + \vc{b}]
\]
using formulas (\ref{distance1}) and (\ref{distance2}).
\begin{prop}\label{prop:mass center is on segment}
Let $\vc{a}$ and $\vc{b}$ be material vectors such that $\vc{a}+\vc{b}\neq\vcz$.
\begin{itemize}
\item
Then, the mass center $[\vc{a} \oplus \vc{b}]$ lies on the geodesic in $\mathbb{X}^{n}$ passing through $[\vc{a}]$ and $[\vc{b}]$. Therefore, we have
\[
\vc{a} \oplus \vc{b} \in \text{\emph{Span}}\{\vc{a}, \vc{b}\}.
\]
\item
In particular, if $m(\vc{a})=m(\vc{b})$, the mass center of them is the mid-point of them:
\[
[\vc{a}\oplus\vc{b}]=[\vc{a}+\vc{b}].
\]
\end{itemize}
\end{prop}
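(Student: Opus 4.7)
The plan is to isolate the mass center by symmetry, using the axiom of isometry invariance against an appropriate stabilizer. Let $V=\text{Span}\{\vc{a},\vc{b}\}\subset\mathbb{R}^{n+1}$, and let $\gamma=V\cap\mathbb{X}^n$ be the geodesic through $[\vc{a}]$ and $[\vc{b}]$. First I note that Proposition \ref{prop33} converts the hypothesis $\vc{a}+\vc{b}\neq\vcz$ into $\vc{a}\oplus\vc{b}\neq\vcz$, so $[\vc{a}\oplus\vc{b}]$ is well defined.

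For the first bullet, I would consider the group $G$ of isometries of $\mathbb{X}^n$ that fix $V$ pointwise, equivalently, the $J_{n+1}$-orthogonal transformations of $\mathbb{R}^{n+1}$ restricting to the identity on $V$. Because $n\geq 2$, the common fixed set of $G$ in $\mathbb{X}^n$ is exactly $\gamma$: when $n=2$ one can already use the single reflection across the hyperplane $\gamma$ itself, and when $n\geq 3$ one uses reflections across every hyperplane of $\mathbb{X}^n$ that contains $\gamma$ and intersects their fixed sets. For any $g\in G$, the axiom of isometry invariance gives
\[
g(\vc{a}\oplus\vc{b}) \;=\; g(\vc{a})\oplus g(\vc{b}) \;=\; \vc{a}\oplus\vc{b},
\]
so the mass center is fixed by all of $G$. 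Consequently $[\vc{a}\oplus\vc{b}]\in\gamma$, and therefore $\vc{a}\oplus\vc{b}\in V=\text{Span}\{\vc{a},\vc{b}\}$.

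For the second bullet, I may assume $[\vc{a}]\neq[\vc{b}]$ (otherwise the overlapping axiom concludes the proof directly). Since $[\vc{a}]$ and $[\vc{b}]$ are not antipodal, there is a well-defined isometry $\sigma$ of $\mathbb{X}^n$ reflecting across the perpendicular bisector hyperplane of the segment $[\vc{a}][\vc{b}]$; by construction $\sigma$ swaps $[\vc{a}]$ and $[\vc{b}]$. Because $m_{\vc{a}}=m_{\vc{b}}$, we get $\sigma(\vc{a})=\vc{b}$ and $\sigma(\vc{b})=\vc{a}$, so the axiom of isometry invariance together with the axiom of permutation yields
\[
\sigma(\vc{a}\oplus\vc{b}) \;=\; \sigma(\vc{a})\oplus\sigma(\vc{b}) \;=\; \vc{b}\oplus\vc{a} \;=\; \vc{a}\oplus\vc{b}.
\]
Combining the first bullet with this $\sigma$-invariance forces $[\vc{a}\oplus\vc{b}]$ to lie at the unique point of $\gamma$ fixed by $\sigma$, namely the midpoint, and a direct check using the distance formulas (\ref{distance1})--(\ref{distance2}) identifies this midpoint with $[\vc{a}+\vc{b}]$.

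The main obstacle I anticipate is verifying cleanly that the group $G$ above has $\gamma$ as its full common fixed set, and that the perpendicular bisector reflection genuinely extends to an isometry of $\mathbb{X}^n$ in all three geometries in a dimension-uniform way. Both come down to the standard fact that the group of $J_{n+1}$-preserving linear transformations acts transitively on the set of orthonormal frames compatible with a given totally geodesic subspace, a fact already built into the orthonormal-basis formalism set up in Section \ref{section:models of spaces}.
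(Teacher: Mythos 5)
Your first bullet and the overall symmetry strategy match the paper's proof: the paper likewise fixes the mass center on the geodesic via the involution fixing $\gamma$ pointwise, and then uses an isometry swapping $\vc{a}$ and $\vc{b}$ for the equal-mass case. However, your second bullet has a genuine gap in the spherical case. On $\mathbb{S}^n$ the geodesic $\gamma$ is a great circle, and the perpendicular-bisector reflection $\sigma$ (equally, the involution in the midpoint) does \emph{not} have a unique fixed point on $\gamma$: its fixed set on $\gamma$ consists of two antipodal points, the midpoint $[\vc{a}+\vc{b}]$ of the short segment and the midpoint $[-\vc{a}-\vc{b}]$ of the complementary arc. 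Your argument therefore only shows
\[
[\vc{a}\oplus\vc{b}]\in\{[\vc{a}+\vc{b}],\,[-\vc{a}-\vc{b}]\},
\]
and symmetry alone cannot decide between the two, since both candidates are invariant under every isometry preserving the unordered pair $\{[\vc{a}],[\vc{b}]\}$. The paper closes exactly this gap with the \emph{axiom of continuity}: deform $\vc{b}$ continuously to $\vc{a}$ along the short segment $I$ keeping the masses equal; the mass center then stays on one of the two disjoint arcs $I$, $\tilde I$ throughout, and at the endpoint $\vc{b}_0=\vc{a}$ the \emph{axiom of overlapping} gives $[\vc{a}\oplus\vc{a}]=[\vc{a}]\in I$, which forces $[\vc{a}\oplus\vc{b}]\in I$ and hence $[\vc{a}\oplus\vc{b}]=[\vc{a}+\vc{b}]$. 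Your proof needs this (or an equivalent) additional step; for $\mathbb{E}^n$ and $\mathbb{H}^n$ your uniqueness-of-fixed-point claim is correct and the argument goes through as written.
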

\begin{proof}
Since the given material points remain fixed under the involution in the geodesic passing through them, the mass center must lie on that geodesic.

Since the given material points are fixed under the involution in the geodesic through them, the mass center must be on that geodesic.

Assume that the masses of two material points are same and let $\iota$ be the involution in the mid-point $[\vc{a}+\vc{b}]$ of them. Then, it is clear that $\iota(\vc{a})=\vc{b}$ and $\iota(\vc{b})=\vc{a}$. We have
\[
\iota(\vc{a}\oplus\vc{b})=\iota(\vc{a})\oplus\iota(\vc{b})
=\vc{b}\oplus\vc{a}=\vc{a}\oplus\vc{b}.
\]
The point $[\vc{a} \oplus \vc{b}]$ is fixed under the involution $\iota$. 

When $\mathbb{X}^{n} = \mathbb{E}^n$ or $\mathbb{X}^{n} = \mathbb{H}^n$, there is a unique fixed point under the involution $\iota$. Therefore, we obtain that
\[
[\vc{a} \oplus \vc{b}] = [\vc{a} + \vc{b}].
\]

When $\mathbb{X}^{n} = \mathbb{S}^n$, there are two possibilities. The point $[\vc{a} \oplus \vc{b}]$ is either on the short geodesic segment $I$ joining $[\vc{a}]$ and $[\vc{b}]$ or on the segment $\tilde{I}$, which is the geodesic segment antipodal to $I$. Note that $I$ and $\tilde{I}$ are disjoint, and
\[
[\vc{a} \oplus \vc{b}] =
\begin{cases}
[\vc{a} + \vc{b}], & \text{if } [\vc{a} \oplus \vc{b}] \in I, \\
[-\vc{a} - \vc{b}], & \text{if } [\vc{a} \oplus \vc{b}] \in \tilde{I}.
\end{cases}
\]
Let $\vc{b}_t$ be a continuously parametrized material vector satisfying that
\begin{itemize}
  \item $[\vc{b}_t]\in I$ for all $t\in[0,1]$,
  \item $m_{\vc{a}}=m_{\vc{b}}=m_{\vc{b}_t}$, for all $t\in[0,1]$,
  \item $\vc{b}_0=\vc{a}$ and $\vc{b}_1=\vc{b}$.
\end{itemize}
By the \emph{axiom of continuity}, the point $[\vc{a}\oplus\vc{b}_t]$ is either on $I$ for all $t\in[0,1]$ or on $\tilde{I}$ for all $t\in[0,1]$. Since, by the \emph{axiom of overlapping},
\[
[\vc{a}\oplus\vc{b}_0]=[\vc{a}\oplus\vc{a}]=[2\vc{a}]=[\vc{a}]\in I,
\]
we can conclude that
\[
[\vc{a}\oplus\vc{b}_1]=[\vc{a}\oplus\vc{b}]\in I.
\]
This is the end of the proof.
\end{proof}


\subsection{Existence of mass center system for  multi-dimensional space }
\begin{prop}
There is a mass center system for finite collection of material point as stated below :
\[
\vc{a}_1\oplus\dots\oplus\vc{a}_N=\vc{a}_1+\dots+\vc{a}_N.
\]
\end{prop}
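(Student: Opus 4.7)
The plan is to verify that the assignment
\[
\vc{a}_1 \oplus \cdots \oplus \vc{a}_N := \vc{a}_1 + \cdots + \vc{a}_N
\]
in the ambient $\mathbb{R}^{n+1}$ satisfies every axiom of the mass center system. Before touching the axioms, I would first confirm that the sum lies in $\mathbb{R}^+\mathbb{X}^n$ so the assignment is well defined. For $\mathbb{E}^n$ each summand has last coordinate equal to its mass, so the sum has positive $(n+1)$-th coordinate and lies in $\mathbb{R}^+\mathbb{E}^n$. For $\mathbb{S}^n$ the cone $\mathbb{R}^+\mathbb{S}^n$ is all of $\mathbb{R}^{n+1}$, so there is nothing to check. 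For $\mathbb{H}^n$ the $(n+1)$-th coordinate of the sum is clearly positive, and using (\ref{f3}) and (\ref{f9}) with $\cosh \geq 1$ one obtains the Lorentzian reverse-Cauchy–Schwarz estimate
\[
\Bigl\langle \sum_i \vc{a}_i,\, \sum_j \vc{a}_j \Bigr\rangle \;\leq\; -\Bigl(\sum_i m_{\vc{a}_i}\Bigr)^2 \;\leq\; 0,
\]
so the sum is a negative vector in the future cone, hence in $\mathbb{R}^+\mathbb{H}^n$.

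Next I would go through the remaining axioms. The \emph{zero mass}, \emph{single point}, \emph{overlapping}, \emph{permutation and partition}, and \emph{continuity} axioms all reduce immediately to elementary properties of vector addition in $\mathbb{R}^{n+1}$: the zero vector is the additive identity, scalar multiples of a fixed vector add via $(r_1 + r_2)\vc{a}$, vector addition is commutative and associative, and it is continuous. I would dispose of each of these in one line apiece.

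The only axiom requiring a genuine observation is \emph{isometry invariance}. The key point is that every isometry $g$ of $\mathbb{X}^n$ is the restriction of a linear map $\tilde g$ on $\mathbb{R}^{n+1}$: in the spherical and hyperbolic cases $\tilde g$ is a matrix preserving $J_{n+1}$, and in the Euclidean case $g$ lifts to the linear map
\[
\tilde g \;=\; \begin{pmatrix} A & \vc{v} \\ \vcz^T & 1 \end{pmatrix}
\]
acting on $\mathbb{R}^{n+1}$ with $\mathbb{E}^n = \{x_{n+1}=1\}$. In each geometry one checks $\tilde g(m\vc{r}) = m\, g(\vc{r})$, so $\tilde g$ agrees with the extended action of $g$ on material vectors defined in the axiom. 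Linearity of $\tilde g$ then immediately yields
\[
g(\vc{a}_1)\oplus\cdots\oplus g(\vc{a}_N) \;=\; \tilde g(\vc{a}_1+\cdots+\vc{a}_N) \;=\; g(\vc{a}_1\oplus\cdots\oplus\vc{a}_N).
\]
No real obstacle arises: the only mildly delicate step in the whole proof is the hyperbolic well-definedness, handled by the Lorentzian inequality above; everything else is a one-line verification.
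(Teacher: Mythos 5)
Your proposal is correct and follows essentially the same route as the paper: all axioms except isometry invariance are dispatched as immediate properties of vector addition, and isometry invariance is reduced to the fact that every isometry of $\mathbb{X}^n$ extends to a linear map on the ambient $\mathbb{R}^{n+1}$ (an element of $\mathrm{O}(n+1)$, $\mathrm{O}(n,1)$, or the affine block matrix in the Euclidean case). The only addition is your explicit check that the sum lands in $\mathbb{R}^+\mathbb{X}^n$ via the reverse Cauchy--Schwarz inequality in the hyperbolic case, a well-definedness point the paper leaves implicit; this is a small but legitimate improvement rather than a divergence.
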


\begin{proof}
Apart from the \textit{axiom of isometry invariance}, it is evident that this system satisfies the other axioms. We know that the isometry group of $\mathbb{S}^n$ and $\mathbb{H}^n$ are, respectively, the groups $\text{O}(n+1)$ and $\text{O}(n,1)$. These groups act not only on the considered spaces but also on the ambient space $\mathbb{R}^{n+1}$. Each element in these isometry groups is a linear map. The isometry group of $\mathbb{E}^n$ is the semidirect product $O(n) \ltimes \mathbb{R}^n$ and each element in this group is a linear map represented by the matrix of the form 
\[
\left(
\begin{array}{c|c}
  A & \vc{b} \\
  \hline
  \vcz & 1
\end{array}
\right),
\quad A\in O(n),~\vc{b}\in\mathbb{R}^n.
\]
Hence, the system described in this proposition satisfies the \textit{axiom of isometry invariance}.
\end{proof}

\subsection{Uniqueness of mass center system for  multi-dimensional space }
Let us first consider the case where the dimension is greater than or equal to two. In this subsection, we state and prove the uniqueness of the mass center system for dimensions two and higher. The following lemma plays a crucial role in proving the uniqueness theorem.

\begin{lemma}\label{lem:integer mass, [oplus]=[+]}
Assume that $\dim\mathbb{X}^{n}\geq2$. Let $\vc{u},\vc{v}$ be material vectors with $m_{\vc{u}}=m_{\vc{v}}$.  Then, for all positive integers $k$ and $l$,
\begin{equation}\label{eqn1 in lem:integer mass, [oplus]=[+]}
[k\vc{u}\oplus l\vc{v}]=[k\vc{u}+l\vc{v}].
\end{equation}
\end{lemma}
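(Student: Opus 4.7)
The plan is to first establish a more general equal-mass statement---that the mass center of $N$ equal-mass material vectors in general position is the bracket of their ambient sum---and then deduce the lemma by perturbing $k$ copies of $\vc{u}$ together with $l$ copies of $\vc{v}$ off their common geodesic and passing to the limit via the axiom of continuity. The hypothesis $\dim\mathbb{X}^n\ge 2$ enters precisely here: it supplies the perpendicular room for this perturbation.

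I would induct on $N$ to prove: if $\vc{w}_1,\ldots,\vc{w}_N$ are equal-mass material vectors, not all lying on a single geodesic, with $\vc{w}_1+\cdots+\vc{w}_N\neq\vcz$, then $[\vc{w}_1\oplus\cdots\oplus\vc{w}_N]=[\vc{w}_1+\cdots+\vc{w}_N]$. The base $N=2$ is Proposition~\ref{prop:mass center is on segment}. For $N\ge 3$, Proposition~\ref{prop:mass center is on segment} applied to the partition $\{\vc{w}_i\}\sqcup\{\vc{w}_j\}_{j\ne i}$ forces the total mass center onto the geodesic $\gamma_i$ through $\vc{w}_i$ and the $(N-1)$-point center, which by induction equals $[\sum_{j\ne i}\vc{w}_j]$. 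The candidate $P:=[\sum_j\vc{w}_j]$ lies on each $\gamma_i$, since $\sum_j\vc{w}_j=\vc{w}_i+\sum_{j\ne i}\vc{w}_j$ sits in the 2-plane spanned by those two vectors. Non-collinearity forces at least two of the $\gamma_i$ to be genuinely distinct geodesics---if they all coincided with one geodesic $\gamma$, every $\vc{w}_i$ would lie on $\gamma$, a contradiction---and two distinct geodesics meet in at most one point, pinning the mass center to $P$.

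For the lemma itself, I use $\dim\mathbb{X}^n\ge 2$ to pick perturbed mass-$m$ families $\vc{u}_i^{(\epsilon)}\to\vc{u}$ ($1\le i\le k$) and $\vc{v}_j^{(\epsilon)}\to\vc{v}$ ($1\le j\le l$) such that the $k+l$ perturbed vectors are in general position (no special collinearities in any subset appearing in the induction, and no antipodal degeneracies in the pair sums needed for the base case). The claim proved above identifies the perturbed mass center as $[\sum_i\vc{u}_i^{(\epsilon)}+\sum_j\vc{v}_j^{(\epsilon)}]$. The axiom of continuity then drives the mass center of the perturbed family to $k\vc{u}\oplus l\vc{v}$ and the bracketed sum to $[k\vc{u}+l\vc{v}]$ as $\epsilon\to 0$, yielding the lemma. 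The main obstacle is in the inductive step---verifying that the complements remain in general position and that the medians $\gamma_i$ do not collapse to a single geodesic---both of which I handle by choosing the perturbation sufficiently generically; this is also exactly the reason the assumption $\dim \mathbb{X}^n \ge 2$ cannot be dropped.
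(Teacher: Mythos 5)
Your overall architecture is genuinely different from the paper's (which fixes an auxiliary third mass point $\vc{w}$ and runs an induction on $k+l$ through a cevian-concurrency configuration), and for $\mathbb{X}^n=\mathbb{E}^n$ or $\mathbb{H}^n$ your $N$-point induction plus generic perturbation would go through. But there is a genuine gap in the spherical case, at the step ``two distinct geodesics meet in at most one point, pinning the mass center to $P$.'' On $\mathbb{S}^n$ a geodesic is a great circle, the intersection of a $2$-plane through the origin with the sphere; two distinct great circles that share the point $P$ automatically share its antipode $-P$ as well. So your intersection argument only pins the mass center down to the two-element set $\{P,-P\}$, and since the first bullet of Proposition \ref{prop:mass center is on segment} only places the two-point mass center on the full geodesic (not on the shorter segment), nothing you have invoked excludes $-P$. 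This antipodal ambiguity is precisely the delicate point of the spherical case, and the paper spends a dedicated passage killing it: one connects the configuration by a continuous path to a degenerate configuration where the answer is evident (all points coincident), observes that the two candidate positions lie on disjoint arcs, and uses the \emph{axiom of continuity} to forbid a jump to the antipodal arc. Your induction needs the same (or an equivalent) supplementary argument at every stage, and it must be made compatible with your generic perturbation, which adds nontrivial bookkeeping.

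A second, smaller omission: your final limit $\epsilon\to 0$ identifies $[k\vc{u}\oplus l\vc{v}]$ with $\lim_{\epsilon\to 0}\bigl[\sum_i\vc{u}_i^{(\epsilon)}+\sum_j\vc{v}_j^{(\epsilon)}\bigr]$, which requires $k\vc{u}+l\vc{v}\neq\vcz$, because $[\,\cdot\,]$ is not continuous (indeed not defined) at the origin. The degenerate case $\mathbb{X}^n=\mathbb{S}^n$, $k=l$, $\vc{u}+\vc{v}=\vcz$ therefore escapes your limiting argument and must be treated separately, as the paper does at the outset of its proof using Proposition \ref{prop33} and the \emph{axiom of overlapping}.
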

\begin{proof}
For the case that $\vc{u}=\vc{v}$, this lemma is obviously true.

For the case that $\mathbb{X}^{n}=\mathbb{S}^n$ and $\vc{u}+\vc{v}=\vcz$, this lemma is true as well: We may assume that $k\leq l$. Observe that
\begin{align*}
[k\vc{u}\oplus l\vc{v}]&=[k\vc{u}\oplus l(-\vc{u})]
\\
&=[k\{\vc{u}\oplus(-\vc{u})\}\oplus(l-k)(-\vc{u})]
\\
&=[\vcz\oplus(l-k)(-\vc{u})]
\\
&=[(k-l)\vc{u}]
\\
&=[k\vc{u}+l\vc{v}].
\end{align*}

For the case that $\vc{u}$ and $\vc{v}$ are linearly independent, we use the weak induction on $k+l$.

By the second assertion of Proposition \ref{prop:mass center is on segment}, this lemma is true for $k=l=1$.\\
Assume that this lemma is true for $k+l\leq N$ and let $p$, $q$ be any pair of positive integers such that $p+q=N$. To complete the induction argument, it is enough to show that
\[
[(p+1)\vc{u}\oplus q\vc{v}]=[(p+1)\vc{u}+q\vc{v}]
\]
and
\[
[p\vc{u}\oplus(q+1)\vc{v}]=[p\vc{u}+(q+1)\vc{v}].
\]

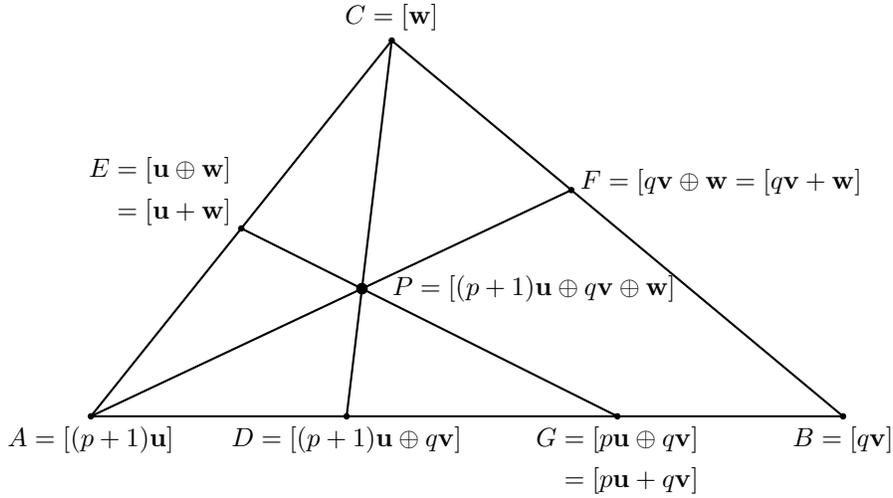
\begin{figure}[h!]
    \vspace{1cm}
		\begin{center}
   \begin{tikzpicture}
  \coordinate (A) at (0, 0);
  \coordinate (B) at (10, 0);
  \coordinate (C) at (4, 5);
  \coordinate (D) at (3.4, 0);
  \coordinate (E) at (2, {1.25*2});
  \coordinate (G) at (7, 0);

  \draw[thick] (A) node[anchor=north] {$A = [(p + 1) \vc{u}]$}
    -- (B) node[anchor=north] {$B = [q \vc{v}]$}
    -- (C) node[anchor=south] {$C = [\vc{w}]$} 
    -- cycle;

  \draw[thick, name path=C--D] (C) 
    -- (D) node[anchor=north] {$D = [(p + 1) \vc{u} \oplus q \vc{v}]$};

  \draw[thick, name path=E--G] (E) node[anchor=south east, yshift=-4] 
    {$\begin{aligned}
        E = [\vc {u} \oplus \vc {w}]\\ = [\vc {u} + \vc {w}]
    \end{aligned}$}
    -- (G) node[anchor=north] {$\begin{aligned}
      G = [p \vc{u} \oplus q \vc{v}]\\ = [p \vc{u} + q \vc{v}]
    \end{aligned}$};

  \filldraw[black] (A) circle (1pt)
    (B) circle (1pt)
    (C) circle (1pt)
    (D) circle (1pt)
    (E) circle (1pt)
    (G) circle (1pt);

  \coordinate (P) at (intersection of C--D and E--G);
  \filldraw[black] (P) circle (2pt)
    node[anchor=west, xshift=8] {$P = [(p + 1) \vc{u} \oplus q \vc{v} \oplus \vc{w}]$};

  \path[name path=A--P] (A) -- (P);
  \path[name path=B--C] (B) -- (C);
  \coordinate (F) at (intersection of A--P and B--C);

  \filldraw[black] (F) circle (1pt) 
    node[anchor=west, yshift=3] {$F = [q \vc{v} \oplus \vc{w} 
      = [q \vc{v} + \vc{w}]$};
  \draw[thick] (A) -- (F);
\end{tikzpicture}
   \caption{concurrent segments $\overline{AF}$, $\overline{CD}$, $\overline{EG}$ and the induction hypothesis applied on the points $E$, $F$, $G$  }\label{fig:triangleabc}
		\end{center}
	\end{figure}
    
Since $\dim\mathbb{X}^{n}\geq2$, we can choose a material vector $\vc{w}$ such that the vectors $\vc{u}$, $\vc{v}$, and $\vc{w}$ are linearly independent and that $m_{\vc{u}}=m_{\vc{v}}=m_{\vc{w}}$.\
Let 
\begin{equation}
    A=[(p+1)\vc{u}],~~B=[q\vc{v}],~~C=[\vc{w}],
\end{equation}
and let 
\begin{equation}
    D=[(p+1)\vc{u}\oplus q\vc{v}], ~~E=[\vc{u}\oplus\vc{w}],~~F=[q\vc{v}\oplus\vc{w}], ~~G=[p\vc{u}\oplus q\vc{v}],
\end{equation}
Then, as seen in Figure \ref{fig:triangleabc},
\begin{equation}
    D,G\in\overline{AB}, ~~E\in\overline{CA}, ~~F\in\overline{BC}.
\end{equation}
By the \emph{axiom of permutation and partition} and the \emph{axiom of overlapping}, three segments $\overline{AF}$, $\overline{CD}$, $\overline{EG}$ are concurrent at the point
\begin{equation*}
  P=[(p+1)\vc{u}\oplus q\vc{v}\oplus\vc{w}].
\end{equation*}
By the induction hypothesis, we have
\begin{equation*}
  E=[\vc{u}+\vc{w}], 
  ~~F=[q\vc{v}+\vc{w}], 
  ~~G=[p\vc{u}+q\vc{v}].
\end{equation*}
From Proposition \ref{prop:mass center is on segment} and the fact that the segments  $\overline{AF}$ and $\overline{EG}$ meet at $P$, we can observe 
\begin{align*}
(p+1)\vc{u}\oplus q\vc{v}\oplus\vc{w}
& \,\in\,\text{Span}\{(p+1)\vc{u}\,,\,q\vc{v}+\vc{w}\}
\cap\text{Span}\{\vc{u}+\vc{w},p\vc{u}+q\vc{v}\}
\\
&\,=\, \text{Span}\{(p+1)\vc{u}+q\vc{v}+\vc{w}\}.
\end{align*}
From Proposition \ref{prop:mass center is on segment} and the fact that the segments  $\overline{AB}$ and the geodesic through $P$ and $C$ meet at $D$, we can observe 
\begin{align*}
(p+1)\vc{u}\oplus q\vc{v}
&\,\in \,
\text{Span}\{(p+1)\vc{u}\,,\,q\vc{v}\}
\cap\text{Span}\{(p+1)\vc{u}+q\vc{v}+\vc{w},\vc{w}\}
\\
& \,=\, \text{Span}\{(p+1)\vc{u}+q\vc{v}\}.
\end{align*}
Hence we have either
\[
[(p+1)\vc{u}\oplus q\vc{v}]=[(p+1)\vc{u}+q\vc{v}]
\]
or
\[
[(p+1)\vc{u}\oplus q\vc{v}]=[-(p+1)\vc{u}-q\vc{v}].
\]

When $\mathbb{X}^{n}=\mathbb{E}^n$ or $\mathbb{X}^{n}=\mathbb{H}^n$, the vector $-(p+1)\vc{u}-q\vc{v}$ cannot be a positive multiple of a material vector on the underlying space. Hence, it must be true that $[(p+1)\vc{u}\oplus q\vc{v}]=[(p+1)\vc{u}+q\vc{v}]$.

When $\mathbb{X}^{n}=\mathbb{S}^n$, the point $[(p+1)\vc{u}+q\vc{v}]$ is on the geodesic segment $I$ joining $[\vc{u}]$ and $[\vc{v}]$. The point $[-(p+1)\vc{u}-q\vc{v}]$ is on the  geodesic segment which is antipodal to the geodesic segment $I$. When $[\vc{u}]=[\vc{v}]$, we see
\[
[(p+1)\vc{u}\oplus q\vc{v}]=[\vc{u}]\in I.
\]
By a similar argument used in the proof of Proposition \ref{prop:mass center is on segment}, the point $[(p+1)\vc{u}\oplus q\vc{v}]$ must be on $I$. Hence we can conclude that
\[
[(p+1)\vc{u}\oplus q\vc{v}]=[(p+1)\vc{u}+q\vc{v}],
\]
for the case $\mathbb{X}^{n}=\mathbb{S}^n$ as well.

We omit the proof for the fact that $[p\vc{u}\oplus(q+1)\vc{v}]=[p\vc{u}+(q+1)\vc{v}]$.
This completes the induction arguments and finishes the proof.
\end{proof}

\begin{theorem}\emph{(Uniqueness of the mass center system for finite collections of material points)}\label{unique}
 Assume that $\dim{X}^{n}\geq2$, then there is a unique mass center system: For material vectors $\vc{a}_1,\dots,\vc{a}_N$, we have
\begin{equation}\label{eqn: the mass center system} \vc{a}_1\oplus\dots\oplus\vc{a}_N=\vc{a}_1+\dots+\vc{a}_N.
\end{equation}
\end{theorem}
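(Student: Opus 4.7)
The plan is to upgrade Lemma~\ref{lem:integer mass, [oplus]=[+]} in three stages: first, extend the direction identity $[\vc{a}\oplus\vc{b}]=[\vc{a}+\vc{b}]$ from the equal-mass integer-weight setting to arbitrary pairs of material vectors; second, use an associativity trick to force the ambient-space length to match as well, yielding $\vc{a}\oplus\vc{b}=\vc{a}+\vc{b}$; third, induct on $N$.

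For the first stage, given $\vc{a}=m_{\vc{a}}[\vc{a}]$ and $\vc{b}=m_{\vc{b}}[\vc{b}]$ with rational masses, choose a common unit $\mu>0$ so that $m_{\vc{a}}=k\mu$ and $m_{\vc{b}}=l\mu$ with $k,l$ positive integers. Then $\vc{a}=k\vc{u}$ and $\vc{b}=l\vc{v}$, where $\vc{u}=\mu[\vc{a}]$ and $\vc{v}=\mu[\vc{b}]$ have equal mass, so Lemma~\ref{lem:integer mass, [oplus]=[+]} applies directly and gives $[\vc{a}\oplus\vc{b}]=[k\vc{u}+l\vc{v}]=[\vc{a}+\vc{b}]$. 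For irrational masses, approximate each mass by a sequence of rationals and invoke the axiom of continuity to transport the identity to the limit.

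For the second stage, assume first that $\vc{a}$ and $\vc{b}$ are linearly independent in $\mathbb{R}^{n+1}$. By the first stage there is an unknown $\alpha>0$ with $\vc{a}\oplus\vc{b}=\alpha(\vc{a}+\vc{b})$. The axiom of overlapping gives $\vc{a}\oplus\vc{a}=2\vc{a}$, so the axiom of permutation and partition yields
\[
2\vc{a}\oplus\vc{b}=(\vc{a}\oplus\vc{a})\oplus\vc{b}=\vc{a}\oplus(\vc{a}\oplus\vc{b})=\vc{a}\oplus\alpha(\vc{a}+\vc{b}).
\]
Applying the first-stage direction identity to each outer expression, the leftmost becomes $\beta(2\vc{a}+\vc{b})$ for some $\beta>0$ and the rightmost becomes $\gamma\bigl((1+\alpha)\vc{a}+\alpha\vc{b}\bigr)$ for some $\gamma>0$. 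Matching the coefficients of $\vc{a}$ and of $\vc{b}$ produces $2\beta=\gamma(1+\alpha)$ and $\beta=\gamma\alpha$; dividing these forces $\alpha=1$. The degenerate cases $[\vc{a}]=[\vc{b}]$ and (on $\mathbb{S}^{n}$) $[\vc{b}]=[-\vc{a}]$ are handled by the axiom of overlapping and by Proposition~\ref{prop33}, respectively.

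For the third stage, I induct on $N$: by the axiom of permutation and partition, $\vc{a}_{1}\oplus\cdots\oplus\vc{a}_{N}=(\vc{a}_{1}\oplus\cdots\oplus\vc{a}_{N-1})\oplus\vc{a}_{N}$, and one applies the induction hypothesis to the first factor followed by the $N=2$ result just established. If an intermediate sum $\vc{a}_{1}+\cdots+\vc{a}_{N-1}$ happens to vanish (possible only on $\mathbb{S}^{n}$), the rule of identity $\vcz\oplus\vc{a}_{N}=\vc{a}_{N}$ closes the loop. I expect the second stage to be the main obstacle: one has to recognize that the two associative groupings of $\vc{a}\oplus\vc{a}\oplus\vc{b}$ furnish two independent compatibility equations whose ratio is precisely the assertion $\alpha=1$.
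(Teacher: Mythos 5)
Your proposal is correct and follows essentially the same three-stage architecture as the paper's proof: the direction identity $[\vc{a}\oplus\vc{b}]=[\vc{a}+\vc{b}]$ obtained from Lemma~\ref{lem:integer mass, [oplus]=[+]} via rational mass ratios and the axiom of continuity, an associativity computation that pins down the unknown scalar, and induction on $N$. The only substantive variation is in the second stage, where the paper brackets $\vc{a}\oplus\vc{b}\oplus\vc{c}$ in two ways for an auxiliary $\vc{c}$ chosen so that $\vc{a},\vc{b},\vc{c}$ are linearly independent, whereas you take $\vc{a}$ itself as the auxiliary element via $\vc{a}\oplus\vc{a}\oplus\vc{b}=2\vc{a}\oplus\vc{b}$; this is a slightly leaner version of the same trick, needing only the independence of $\vc{a}$ and $\vc{b}$ and extracting $\alpha=1$ from the ratio $2=(1+\alpha)/\alpha$.
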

\begin{proof}
We will prove the followings.
\begin{itemize}
    \item For all material vectors $\vc{a}$ and $\vc{b}$, we have
    \begin{equation}\label{f26}
    [\vc{a}\oplus\vc{b}]=[\vc{a}+\vc{b}].
    \end{equation}
    \item For all material vectors $\vc{a}$ and $\vc{b}$, we have
    \begin{equation}\label{f28}
    \vc{a}\oplus\vc{b}=\vc{a}+\vc{b}.
    \end{equation}
\end{itemize}

We now prove the first statement. \\
Let \( \vc{b} = r\vc{b}' \) for some positive real number \( r \) and a material vector \( \vc{b}' \) whose mass is equal to the mass of \( \vc{a} \). We claim that the first statement is true when \( r = \dfrac{l}{k} \) is a rational number. By the \emph{rule of multiplication} and Equation (\ref{eqn1 in lem:integer mass, [oplus]=[+]}) in Lemma \ref{lem:integer mass, [oplus]=[+]}, we have
\begin{equation*}
\left[\vc{a} \oplus \vc{b}\right]
= \left[\vc{a} \oplus \frac{l}{k} \vc{b}'\right]
= \left[k \vc{a} \oplus l \vc{b}'\right]
= \left[k \vc{a} + l \vc{b}'\right]
= \left[\vc{a} + \frac{l}{k} \vc{b}'\right]
= \left[\vc{a} + \vc{b}\right].
\end{equation*}
By the \emph{axiom of continuity}, Equation (\ref{f26}) holds for any positive real number \( r \).

We now proceed to prove the second statement. \\
By the \emph{Axiom of Overlapping}, this statement is obviously true when \( \vc{a} \) and \( \vc{b} \) are linearly dependent. Assume they are linearly independent. Choose a material vector \( \vc{c} \) so that the vectors \( \vc{a} \), \( \vc{b} \), and \( \vc{c} \) are linearly independent.

From Equation (\ref{f26}), we see
\[
  \vc{a} \oplus \vc{b} \oplus \vc{c} = 
  \begin{cases}
     (\vc{a} \oplus \vc{b}) \oplus \vc{c} = k_1 (\vc{a} + \vc{b}) \oplus \vc{c} = k_2 \{ k_1 (\vc{a} + \vc{b}) + \vc{c} \}, \\
     \quad \text{\small{and simultaneously}} \\
     \vc{a} \oplus (\vc{b} \oplus \vc{c}) = \vc{a} \oplus l_1(\vc{b} + \vc{c}) = l_2 \{ \vc{a} + l_1(\vc{b} + \vc{c}) \}.
  \end{cases}
\]
Hence,
\[
k_2 \{ k_1 (\vc{a} + \vc{b}) + \vc{c} \} = l_2 \{ \vc{a} + l_1(\vc{b} + \vc{c}) \}.
\]
This vector does not vanish by the linear independence of the vectors \( \vc{a} \), \( \vc{b} \), and \( \vc{c} \) and Proposition \ref{prop33}. Also  by the linear independence of the vectors \( \vc{a} \), \( \vc{b} \), and \( \vc{c} \), we conclude that \( k_1 = 1 \), and
\begin{equation}\label{f29}
 \vc{a} \oplus \vc{b} = \vc{a} + \vc{b}.
\end{equation}

Applying Equation (\ref{f29}) inductively, we obtain the conclusion of this theorem.
\end{proof}

\subsection{Uniqueness Problem for  one dimensional space}
In this section, we investigate the uniqueness problem of the mass center system on a one-dimensional metric space without boundary. To state the conclusion, there is a unique mass center system on \( \mathbb{S}^1 \) with length \( 2\pi \), and there are uncountably many mass center systems on a one-dimensional manifold with a complete metric. We will demonstrate these facts.

\subsubsection{Mass center system on a circle}

Let \( \vc{u} \) and \( \vc{v} \) be material vectors \emph{with the same mass}. The mass center \( [\vc{u} \oplus \vc{v}] \) is the midpoint of \( [\vc{u}] \) and \( [\vc{v}] \). Hence, we have \( \vc{u} \oplus \vc{v} = k(\vc{u} + \vc{v}) \), where \( k \) is a number. It is easy to verify that this number \( k \) does not depend on the mass of \( \vc{u} \) or \( \vc{v} \), but possibly depends only on the distance \( d \) between \( [\vc{u}] \) and \( [\vc{v}] \).

We introduce the function \( f: [0, \pi] \rightarrow \mathbb{R}^+ \) defined by
\[
\vc{u} \oplus \vc{v} = f(d)(\vc{u} + \vc{v}).
\]
Clearly, this function \( f \) is continuous and
\[ 
f(0) = 1 .
\] 
Our goal now is to show that \( f \) is a constant function.

\begin{lemma}\label{s1}
The function \( f \) above has the properties below:
\begin{equation}\label{lem-13}
  f(\pi - d) = f\left( d/2 \right) f\left( \pi - d/2 \right),
\end{equation}
\begin{equation}\label{lem-14}
  f\left( \pi/2 \right) = 1,
\end{equation}
\begin{equation}\label{lem-15}
  f\left( d/2 \right) = 1 \quad \text{if and only if} \quad f(d) = 1.
\end{equation}
\end{lemma}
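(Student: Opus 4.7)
The plan is to prove all three properties by constructing configurations of four equal-mass material vectors on $\mathbb{S}^1$ and computing the total mass center in two different ways via the axiom of permutation and partition. For (\ref{lem-13}), I take four material vectors $A, B, C, D$ of common mass $m$ at angles $0$, $d/2$, $\pi$, $\pi - d/2$. With the pairing $\{A, C\}, \{B, D\}$, Proposition \ref{prop33} gives $A \oplus C = \vcz$ since $[A]$ and $[C]$ are antipodal with equal mass, while $B \oplus D = f(\pi - d)(B + D)$, so the total equals $f(\pi - d)(B + D)$. With the pairing $\{A, B\}, \{C, D\}$, both $A \oplus B = f(d/2)(A + B)$ and $C \oplus D = f(d/2)(C + D)$ carry the same mass, and their midpoints lie at angles $d/4$ and $\pi - d/4$, at distance $\pi - d/2$ apart; applying $f$ once more yields $f(d/2) f(\pi - d/2)(A + B + C + D)$. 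Since $A + C = \vcz$ we have $A + B + C + D = B + D \neq \vcz$ for $d \in (0, \pi)$, and equating the two expressions yields the identity.

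For (\ref{lem-14}), I just specialize (\ref{lem-13}) to $d = \pi$. The configuration degenerates ($B$ and $D$ coincide at $(0,1)$), but the argument survives: $B \oplus D = 2mB$ by the axiom of overlapping, which matches $f(0)(B + D)$. The resulting identity is $f(0) = f(\pi/2)^2$, and since $f(0) = 1$ and $f > 0$, I conclude $f(\pi/2) = 1$.

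For (\ref{lem-15}), I take four material vectors $A, B, O, O$ of common mass $m$, where $d([A],[B]) = d$ and $[O]$ is the midpoint of $[A]$ and $[B]$ (the double copy of $O$ being permitted by the axiom of overlapping). With the pairing $\{A, B\}, \{O, O\}$, both $A \oplus B$ and $O \oplus O$ sit at $[O]$, so the axiom of overlapping produces a total material vector of mass $2m(f(d)\cos(d/2) + 1)$ at $[O]$. With the pairing $\{A, O\}, \{B, O\}$, both partial mass centers carry equal mass (since $d([A],[O]) = d/2 = d([B],[O])$), and they lie symmetrically about $[O]$ at distance $d/2$ apart, so applying $f$ once more yields a total of mass $2m f(d/2)^2(\cos(d/2) + 1)$ at $[O]$. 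Equating gives the identity $f(d/2)^2(\cos(d/2) + 1) = f(d)\cos(d/2) + 1$, from which the biconditional $f(d/2) = 1 \Leftrightarrow f(d) = 1$ follows immediately for $d \in [0, \pi)$ (where $\cos(d/2) \neq 0$).

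The main obstacle is that the defining relation $\vc{u} \oplus \vc{v} = f(d)(\vc{u} + \vc{v})$ is only available for equal-mass pairs, so every intermediate $\oplus$-step must combine either two equal-mass vectors or two vectors sitting at a common point. The four-point configurations above are engineered precisely so that both pairings stay within these two regimes, sidestepping the harder general lever rule for combining unequal-mass material vectors at distinct points.
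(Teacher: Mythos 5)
Your proof is correct and follows essentially the same route as the paper: the same four-point configuration $\{0, d/2, \pi, \pi-d/2\}$ with the two pairings for (\ref{lem-13}), the same specialization $d=\pi$ for (\ref{lem-14}), and for (\ref{lem-15}) a pair-plus-doubled-midpoint configuration that reproduces the paper's identity after the substitution $d \mapsto \pi - d$ (the paper writes it as $f(\pi/2-d/2)^2(\sin(d/2)+1)=f(\pi-d)\sin(d/2)+1$, which is your relation in disguise). Your explicit remark that the forward implication in (\ref{lem-15}) degenerates at the endpoint where the coefficient vanishes is a point the paper glosses over, but it is harmless since only the other direction is used later.
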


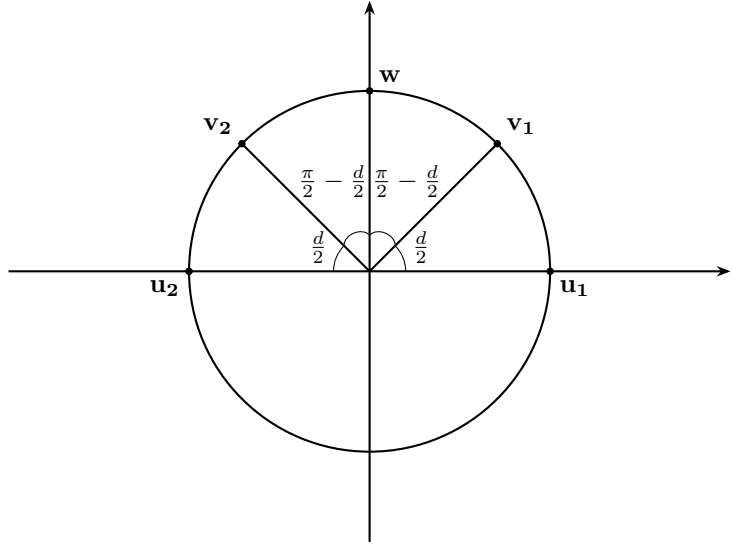
\begin{figure}[h!]
		\begin{center}
   \begin{tikzpicture}[scale=1.2]
  \def \r {2} 

  \coordinate (u1) at (\r, 0);
  \coordinate (u2) at (-\r, 0);
  \coordinate (v1) at ({\r * cos(45)}, {\r * sin(45)});
  \coordinate (v2) at ({\r * cos(135)}, {\r * sin(135)});
  \coordinate (w) at (0, \r);

  \draw[vechead, thick] (-4, 0) -- (4, 0);
  \draw[vechead, thick] (0, -3) -- (0, 3);
  \draw[thick] circle (\r);

  \filldraw[black] (u1) circle (1pt)
    node[anchor=north west] {$\vc{u_1}$};
  \filldraw[black] (u2) circle (1pt)
    node[anchor=north east] {$\vc{u_2}$};
  \filldraw[black] (v1) circle (1pt)
    node[anchor=south west] {$\vc{v_1}$};
  \filldraw[black] (v2) circle (1pt)
    node[anchor=south east] {$\vc{v_2}$};
  \filldraw[black] (w) circle (1pt)
    node[anchor=south west] {$\vc{w}$};

  \draw[thick] (0, 0) -- (v1);
  \draw[thick] (0, 0) -- (v2);

  \draw (0.4, 0) arc (0:45:0.4) node[midway, anchor=west, yshift=4] {$\frac{d}{2}$};
  \draw ({0.4 * cos(45)}, {0.4 * sin(45)}) 
    .. controls ({0.5 * cos(55)}, {0.5 * sin(55)}) and ({0.5 * cos(80)}, {0.5 * sin(80)}) 
    .. (0, 0.4) node[midway, anchor=south, xshift=8, yshift=10] 
      {$\frac{\pi}{2} - \frac{d}{2}$};
  \draw (0, 0.4) 
    .. controls ({0.5 * cos(100)}, {0.5 * sin(100)}) 
      and ({0.5 * cos(125)}, {0.5 * sin(125)}) 
    .. ({0.4 * cos(135)}, {0.4 * sin(135)}) node[midway, anchor=south, xshift=-8, yshift=10]
      {$\frac{\pi}{2} - \frac{d}{2}$};
  \draw (-0.4, 0) arc (180:135:0.4) node[midway, anchor=east, yshift=4] {$\frac{d}{2}$};
\end{tikzpicture}
   \caption{five material points on $\mathbb{S}^1$}\label{fig:circle77}
		\end{center}
	\end{figure}

\begin{proof}
In figure \ref{fig:circle77}, there are five material vectors
\[
\vc{u}_1=1, ~\vc{u}_2=e^{i\pi},~\vc{v}_1=e^{i\frac{d}{2}}, ~\vc{v}_2=e^{i\left(\pi-\frac{d}{2}\right)},~\vc{w}=e^{i\frac{\pi}{2}}.
\]
To prove the first equation, take the vectors $\vc{u}_1, ~\vc{v}_1,~\vc{u}_2, ~\vc{v}_2$ into consideration and observe that
 \begin{align*}
     \vc{u}_1\oplus\vc{v}_1 \oplus  \vc{u}_2\oplus\vc{v}_2
     &=f\left(d/2\right)(\vc{u}_1+\vc{v}_1)\oplus f\left(d/2\right)(\vc{u}_2+\vc{v}_2)\\
     &=f\left(d/2\right)\left\{(\vc{u}_1+\vc{v}_1)\oplus (\vc{u}_2+\vc{v}_2)\right\}\\
     &=f\left(d/2\right) f\left(\pi-d/2\right) \{(\vc{u}_1+\vc{v}_1)+(\vc{u}_2+\vc{v}_2)\}\\
     &=f\left(d/2\right) f\left(\pi-d/2\right) (\vc{v}_1+\vc{v}_2).\\
     \end{align*}
On  the other hand
\begin{align*}
     \vc{u}_1\oplus\vc{v}_1 \oplus  \vc{u}_2\oplus\vc{v}_2
     &=(\vc{u}_1\oplus\vc{u}_2) \oplus  \vc{v}_1\oplus\vc{v}_2\\
     &=\vcz \oplus  \vc{v}_1\oplus\vc{v}_2\\
     &=f\left(\pi-d\right)( \vc{v}_1+ \vc{v}_2).\\
     \end{align*}
Since $\vc{v}_1+ \vc{v}_2\neq\vcz$,  Equation (\ref{lem-13}) holds. Put $d=\pi$ into Equation (\ref{lem-13}) to see that $\displaystyle f(0)=f\left(\frac{\pi}{2}\right)^2$ and hence that $\displaystyle f\left(\frac{\pi}{2}\right)=1$. 

To prove the last statement (\ref{lem-15}), take the vectors $\vc{v}_1, ~\vc{v}_2,~\vc{w}$ into consideration and observe that
 \begin{align*}
     \vc{v}_1\oplus \vc{w}\oplus  \vc{w}\oplus \vc{v}_2
     &=f\left(\pi/2-d/2\right)( \vc{v}_1+ \vc{w})
     \oplus f\left(\pi/2-d/2\right)( \vc{w}+ \vc{v}_2)\\
     &=f\left(\pi/2-d/2\right)\left\{(\vc{v}_1+ \vc{w})\oplus(\vc{w}+ \vc{v}_2)\right\}\\
     &=f\left(\pi/2-d/2\right)^2\left(\vc{v}_1+ 2\vc{w}+\vc{v}_2\right).
 \end{align*}
On  the other hand
\begin{align*}
     &\vc{v}_1\oplus \vc{w}\oplus  \vc{w}\oplus \vc{v}_2\\
     =&f\left(\pi-d\right)( \vc{v}_1+ \vc{v}_2)\oplus 2 \vc{w}\\
     = &f\left(\pi-d\right)( \vc{v}_1+ \vc{v}_2)+ 2\vc{w}.
     \quad (\text{since }[\vc{v}_1+ \vc{v}_2] \text{ and } [\vc{w}] \text{ overlap}.)
 \end{align*}
 Hence, we have $f\left(\pi/2-d/2\right)^2\left(\vc{v}_1+ 2\vc{w}+\vc{v}_2\right)
 =f\left(\pi-d\right)( \vc{v}_1+ \vc{v}_2)+ 2\vc{w}$. From this equation, it is clear that $f\left(\pi/2-d/2\right)=1$ if and only if $f\left(\pi-d\right)=1$ and this is equivalent to the fact that $f\left(d/2\right)=1$ if and only if $f\left(d\right)=1$.
 \end{proof}

Now, we introduce the set of dyadic numbers
\[
D_k=\left\{ \frac{m\pi}{2^k} ~|~ m=0,1,2,\dots, 2^k  \right\}.
\]
The set $D_k$ is evenly distributed in the interval $[0,\pi]$ and $d\in D_k$ if and only if $\pi-d \in D_k$. The union $\displaystyle\cup_{k=0}^{\infty}D_k$ is dense in $[0,\pi]$.
\begin{lemma}  \label{s1-2}
    
    For all $x$, f(x)=1, and consequently, for every pair of material vectors $\vc{u}$ and $\vc{v}$ with same mass,
     \[
     \vc{u}\oplus\vc{v}=\vc{u}+\vc{v}.
     \]
     
\end{lemma}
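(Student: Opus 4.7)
My plan is to show that $f \equiv 1$ on $[0, \pi]$, after which $\vc{u} \oplus \vc{v} = f(d)(\vc{u} + \vc{v}) = \vc{u} + \vc{v}$ is immediate from the defining relation of $f$. Since $f$ is continuous and $\bigcup_{k \geq 0} D_k$ is dense in $[0, \pi]$, it suffices to prove $f(d) = 1$ for every dyadic $d = m\pi/2^k$ by induction on $k$; set $A := \{d \in [0,\pi] \mid f(d) = 1\}$, so the goal becomes $D_k \subseteq A$ for all $k$.

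The base case is essentially free: $f(0) = 1$ by definition, $f(\pi/2) = 1$ by (\ref{lem-14}), and one application of (\ref{lem-15}) with $d = \pi$ gives $f(\pi) = 1$. Hence $D_0, D_1 \subseteq A$.

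For the inductive step with $k \geq 1$, a generic new element of $D_{k+1} \setminus D_k$ has the form $e = (2j+1)\pi/2^{k+1}$ with $0 \leq j \leq 2^k - 1$. Because $2^k$ is even, $e \neq \pi/2$, so I can split on whether $e < \pi/2$ or $e > \pi/2$. In the first case ($2j+1 < 2^k$), I use (\ref{lem-15}) ``downward'': the doubled point $2e = (2j+1)\pi/2^k$ already lies in $D_k \subseteq A$ by hypothesis, and halving puts $e \in A$. In the second case ($2j+1 > 2^k$), I first handle the mirror point $e' := \pi - e = (2^{k+1} - 2j - 1)\pi/2^{k+1}$, whose numerator is odd and strictly less than $2^k$, so $e' \in A$ by the first case. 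I then apply (\ref{lem-13}) with $d := 2e' \in D_k$: both $\pi - d \in D_k \subseteq A$ and $d/2 = e' \in A$, so the identity $f(\pi - d) = f(d/2) f(\pi - d/2)$ forces $f(\pi - d/2) = f(e) = 1$. This closes the induction.

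The essential difficulty is that (\ref{lem-15}) by itself only traverses the tree of halvings and, starting from $\pi/2$, reaches only the points $\pi/2^k$ and their doublings inside $[0,\pi]$; it cannot produce odd-numerator dyadics in the ``upper half'' $(\pi/2, \pi)$. The role of (\ref{lem-13}) is precisely to provide a reflection across $\pi/2$, transporting a lower-half dyadic to its mirror image, and the crux of the argument is orchestrating these two mechanisms compatibly. Once $f \equiv 1$ on the dense set $\bigcup_k D_k$, continuity of $f$ gives $f \equiv 1$ on all of $[0, \pi]$, completing the proof.
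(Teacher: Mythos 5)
Your proposal is correct and follows essentially the same route as the paper: induction over the dyadic sets $D_k$, using (\ref{lem-15}) to halve lower-half dyadics and (\ref{lem-13}) to reflect them across $\pi/2$ into the upper half, then density plus continuity. Your version merely makes the numerator bookkeeping for $D_{k+1}\setminus D_k$ more explicit than the paper's ``$x=d/2$ or $x=\pi-d/2$ for some $d\in D_k$'' dichotomy.
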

\begin{proof} Firstly, we are to show that $f(x)=1$ for all $x\in\displaystyle\cup_{k=0}^{\infty}D_k$ using an induction on $k$. \\
By Equation  (\ref{lem-14}), $f(d)=1$ for all $d\in D_1=\{0,\pi/2, \pi\}$.  Assume that $f(d)=1$ for all $d\in D_k$. By the fact that $d\in D_k$ if and only if $\pi-d\in D_{k}$, (\ref{lem-15}) and Equation (\ref{lem-13}), we get
\[
 f(\pi-d)=1,
\quad f\left(d/2\right)=1,
\quad f\left(\pi-d/2\right)=1.
\]
Let $x\in D_{k+1}$. Since
\[
x=
\begin{cases}
  d/2 ~\text{ for some } d\in D_k,  & \mbox{if } x\leq\dfrac{\pi}{2}, \\
  \pi-d/2 ~\text{ for some } d\in D_k,  & \mbox{otherwise},
\end{cases}
\]
we can see that $f(x)=1$ for all $x\in D_{k+1}$ and finish the induction argument. Since the set $\displaystyle\bigcup_{k=0}^{\infty}D_k$ is dense and the function $f$ is continuous, the function $f$ is identically $1$ in $[0,\pi]$.
\end{proof}

\begin{theorem}\label{uniques1}
There is a unique mass center system for \( \mathbb{S}^1 \). Specifically, for material vectors \( \vc{a}_1, \dots, \vc{a}_N \), we have
\[
\vc{a}_1 \oplus \dots \oplus \vc{a}_N = \vc{a}_1 + \dots + \vc{a}_N.
\]
\end{theorem}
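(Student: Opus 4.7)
The plan is to parallel the proof of Theorem \ref{unique}, using Lemma \ref{s1-2} in the role that the second assertion of Proposition \ref{prop:mass center is on segment} played there. Since the ambient space $\mathbb{R}^2$ of $\mathbb{S}^1$ contains no triple of linearly independent vectors, the induction argument of Lemma \ref{lem:integer mass, [oplus]=[+]} cannot be copied verbatim and must be replaced by a symmetry argument based on the midpoint involution $\iota_{\vc m}$ of $\mathbb{S}^1$, which swaps $[\vc u]$ and $[\vc v]$.

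First I would prove: for equal-mass $\vc u,\vc v\in\mathbb{R}^+\mathbb{S}^1$ with $[\vc u]\neq\pm[\vc v]$ and positive integers $p,q$, one has $p\vc u\oplus q\vc v=p\vc u+q\vc v$. Set $\vc X_{p,q}:=p\vc u\oplus q\vc v$. Regrouping via the axioms of overlapping and of permutation and partition, together with the rule of multiplication, yields
\[
\vc X_{p,q}\oplus\vc X_{q,p}=(p\vc u\oplus q\vc u)\oplus(q\vc v\oplus p\vc v)=(p+q)(\vc u\oplus\vc v)=(p+q)(\vc u+\vc v),
\]
where the last equality uses Lemma \ref{s1-2}. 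By the axiom of isometry invariance, $\iota_{\vc m}(\vc X_{p,q})=\vc X_{q,p}$, so $\vc X_{p,q}$ and $\vc X_{q,p}$ share a common mass; applying Lemma \ref{s1-2} once more converts the $\oplus$ above into $+$, giving $\vc X_{p,q}+\vc X_{q,p}=(p+q)(\vc u+\vc v)$. Writing $\vc X_{p,q}=\lambda\vc u+\mu\vc v$ via Proposition \ref{prop:mass center is on segment}, this forces $\lambda+\mu=p+q$. Combining with the recursion $\vc X_{p,q}=\vc X_{p-1,q-1}\oplus(\vc u+\vc v)$ (obtained by pulling out one copy of $\vc u\oplus\vc v$ using permutation/partition and overlapping, then invoking Lemma \ref{s1-2}) and the inductive hypothesis $\vc X_{p-1,q-1}=(p-1)\vc u+(q-1)\vc v$, strong induction on $p+q$ should pin down $\lambda=p$ and $\mu=q$.

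Next, Proposition \ref{prop:Rule of multiplication} promotes the identity to positive rational coefficients, and the axiom of continuity to arbitrary positive reals, giving $s\vc u\oplus t\vc v=s\vc u+t\vc v$ for every $s,t>0$. For a general pair $\vc a,\vc b$ with $[\vc a]\neq\pm[\vc b]$, apply this with $\vc u=[\vc a]$, $\vc v=[\vc b]$, $s=m_{\vc a}$, $t=m_{\vc b}$; the degenerate cases $[\vc a]=\pm[\vc b]$ are handled by Propositions \ref{identity} and \ref{prop33}. Induction on $N$ via the axiom of permutation and partition then completes the proof.

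The hard part will be closing the induction in the first step: the symmetry identity alone gives only the single equation $\lambda+\mu=p+q$, and the recursion $\vc X_{p,q}=\vc X_{p-1,q-1}\oplus(\vc u+\vc v)$ still involves an $\oplus$ of two material vectors of generally unequal mass, which a priori lies outside the direct scope of Lemma \ref{s1-2}. Making the inductive hypothesis supply the missing second equation — essentially by tracking how $\vc X_{p-1,q-1}+(\vc u+\vc v)$ interacts with its reflected counterpart under $\iota_{\vc m}$ — is where the real work lies.
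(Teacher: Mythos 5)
Your overall architecture (reduce to the equal\nobreakdash-mass integer case, then rationals by Proposition~\ref{prop:Rule of multiplication}, then reals by continuity, then general masses and $N$ points) is sound, but the core step $p\vc{u}\oplus q\vc{v}=p\vc{u}+q\vc{v}$ is not actually established, and the gap you flag at the end is genuine, not a technicality. The swap involution $\iota_{\vc{m}}$ yields exactly one scalar equation, $\lambda+\mu=p+q$, which is consistent with infinitely many candidates for $\vc{X}_{p,q}$ on the line $\{\lambda\vc{u}+\mu\vc{v}:\lambda+\mu=p+q\}$; and your proposed second input, the recursion $\vc{X}_{p,q}=\vc{X}_{p-1,q-1}\oplus(\vc{u}+\vc{v})$, is an $\oplus$ of two material vectors of \emph{unequal} mass located at \emph{distinct} points. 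Evaluating that $\oplus$ is precisely the general two\nobreakdash-point problem the theorem is trying to solve, so the induction is circular: Lemma~\ref{s1-2} cannot be invoked there, and nothing else in your toolkit pins down the split of $p+q$ between $\lambda$ and $\mu$. (If you want to stay with the symmetry strategy, the missing second equation can be extracted from a \emph{second} reflection, namely $\iota_{\vc{u}}$ fixing $[\vc{u}]$: comparing $\vc{X}_{p,q}\oplus\iota_{\vc{u}}(\vc{X}_{p,q})$, computed once by regrouping into $2p\vc{u}\oplus q(\vc{v}\oplus\iota_{\vc{u}}(\vc{v}))$ and once via Lemma~\ref{s1-2} applied to the equal\nobreakdash-mass pair $\vc{X}_{p,q},\iota_{\vc{u}}(\vc{X}_{p,q})$, gives $p+q\cos_X d=\lambda+\mu\cos_X d$ and hence $\lambda=p$, $\mu=q$ when $[\vc{u}]\neq\pm[\vc{v}]$. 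But that idea is not in your write-up.)

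The paper takes an entirely different route that avoids the unequal\nobreakdash-mass obstruction rather than confronting it: a ``split and merge'' process. Given $\vc{a},\vc{b}$ of unequal mass, it splits the heavier one into two pieces, one of which matches the lighter one's mass, merges that piece with the lighter vector using Lemma~\ref{s1-2}, and iterates; each step preserves both $\vc{a}_i\oplus\vc{b}_i$ and $\vc{a}_i+\vc{b}_i$ while halving the distance $d(\vc{a}_i,\vc{b}_i)$. Compactness of the parallelogram spanned by $\vc{a},\vc{b}$ in the ambient $\mathbb{R}^2$ (Bolzano--Weierstrass), the nested intervals theorem, the axiom of continuity, and finally the axiom of overlapping for the limiting coincident points then give $\vc{a}\oplus\vc{b}=\vc{a}+\vc{b}$ directly, with no integer induction at all. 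You should either adopt that limiting argument or supply the second reflection identity; as it stands, your proof does not close.
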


\begin{proof}
   Let \(\vc{a}\) and \(\vc{b}\) be material vectors with masses that are not necessarily the same. We construct two sequences \(\{\vc{a}_i\}\) and \(\{\vc{b}_i\}\) of material vectors following the \emph{split and merge process} described below.

\begin{itemize}
    \item Set \(\vc{a}_1 = \vc{a}\) and \(\vc{b}_1 = \vc{b}\).
    \item If \(\vc{a}_1\) is heavier than \(\vc{b}_1\), split \(\vc{a}_1\) as \(\vc{a}_1 = (1-k)\vc{a}_1 + k\vc{a}_1\) \((0 < k < 1)\) such that \(m_{k\vc{a}_1} = m_{\vc{b}_1}\), and merge \(k\vc{a}_1\) and \(\vc{b}_1\). Set
    \[
    \vc{a}_2 = (1-k)\vc{a}_1, \quad \vc{b}_2 = k\vc{a}_1 \oplus \vc{b}_1 = k\vc{a}_1 + \vc{b}_1.
    \]
    It is obvious that
    \begin{equation}\label{sm1}
        \vc{a}_2 \oplus \vc{b}_2 = \vc{a}_1 \oplus \vc{b}_1, \quad \vc{a}_2 + \vc{b}_2 = \vc{a}_1 + \vc{b}_1,
    \end{equation}
    and since \(\vc{b}_2\) is the midpoint of \(\vc{a}_1\) and \(\vc{b}_1\), we get
    \begin{equation}\label{sm2}
        \overline{\vc{a}_2 \vc{b}_2} \subset \overline{\vc{a}_1 \vc{b}_1}, \quad
        d(\vc{a}_2, \vc{b}_2) = \frac{1}{2}d(\vc{a}_1, \vc{b}_1).
    \end{equation}
    \item If \(\vc{b}_1\) is heavier than \(\vc{a}_1\), execute a similar process to construct \(\vc{a}_2\) and \(\vc{b}_2\). Whichever of \(\vc{a}_1\) or \(\vc{b}_1\) is heavier, we can obtain the same formulas (\ref{sm1}) and (\ref{sm2}).
    \item We can repeat the \emph{split and merge process} to construct \(\vc{a}_3\) and \(\vc{b}_3\) from \(\vc{a}_2\) and \(\vc{b}_2\).
    \item Continue this process indefinitely.
    \item If \(m_{\vc{a}_N} = m_{\vc{b}_N}\) for some \(N \geq 1\) for the first time, stop the process. In this case, we easily obtain that \(\vc{a} \oplus \vc{b} = \vc{a} + \vc{b}\). Therefore, we need to consider only the non-stopping case, i.e., infinite sequences \(\{\vc{a}_i\}\) and \(\{\vc{b}_i\}\).
\end{itemize}

In summary, we have constructed the sequences \(\{\vc{a}_i\}\) and \(\{\vc{b}_i\}\) satisfying the following properties:
\begin{align*}
    &\vc{a}_1 = \vc{a}, \quad \vc{b}_1 = \vc{b}, \\
    &\vc{a} \oplus \vc{b} = \vc{a}_i \oplus \vc{b}_i, \quad \forall i \geq 1, \\
    &\vc{a} + \vc{b} = \vc{a}_i + \vc{b}_i, \quad \forall i \geq 1, \\
    &\overline{\vc{a} \vc{b}} = \overline{\vc{a}_1 \vc{b}_1} \supset \overline{\vc{a}_2 \vc{b}_2} \supset \overline{\vc{a}_3 \vc{b}_3} \supset \cdots, \\
    &d(\vc{a}_i, \vc{b}_i) = \frac{1}{2^{i-1}} d(\vc{a}, \vc{b}).
\end{align*}
    
We now aim to show that there exists a pair of convergent subsequences, \(\{\vc{a}_{i_j}\}\) and \(\{\vc{b}_{i_j}\}\). To do so, we alter our perspective and treat the material vectors \(\vc{a}_i\) and \(\vc{b}_i\) as vectors in the ambient space \(\mathbb{R}^2\). Observe that the parallelogram formed by the vectors \(\vc{a}_{i+1}\) and \(\vc{b}_{i+1}\) is contained within the parallelogram formed by the vectors \(\vc{a}_i\) and \(\vc{b}_i\) for all \(i\). Therefore, every vector in the sequences \(\{\vc{a}_i\}\) and \(\{\vc{b}_i\}\) is contained within the parallelogram formed by the vectors \(\vc{a}\) and \(\vc{b}\), which is a compact set.

By the \emph{Bolzano-Weierstrass theorem}, we know that the sequence of pairs \(\{(\vc{a}_i, \vc{b}_i)\}\) has a convergent subsequence, say \(\{(\vc{a}_{i_j}, \vc{b}_{i_j})\}\). Using the \emph{axiom of continuity}, we can then observe that
\[
\vc{a} \oplus \vc{b} = \lim_{j \to \infty} (\vc{a}_{i_j} \oplus \vc{b}_{i_j}) = \lim_{j \to \infty} \vc{a}_{i_j} \oplus \lim_{j \to \infty} \vc{b}_{i_j}.
\]
Similarly, it is trivial to show that
\[
\vc{a} + \vc{b} = \lim_{j \to \infty} (\vc{a}_{i_j} + \vc{b}_{i_j}) = \lim_{j \to \infty} \vc{a}_{i_j} + \lim_{j \to \infty} \vc{b}_{i_j}.
\]
Applying the \emph{Nested Intervals theorem}, we conclude that the sequences \(\{[\vc{a}_{i_j}]\}\) and \(\{[\vc{b}_{i_j}]\}\) converge to the same point in \(\mathbb{S}^1\). Therefore, the material points \(\displaystyle\lim_{j \to \infty} \vc{a}_{i_j}\) and \(\displaystyle\lim_{j \to \infty} \vc{b}_{i_j}\) overlap.

By the \emph{axiom of overlapping}, we then have
\[
\lim_{j \to \infty} \vc{a}_{i_j} \oplus \lim_{j \to \infty} \vc{b}_{i_j} = \lim_{j \to \infty} \vc{a}_{i_j} + \lim_{j \to \infty} \vc{b}_{i_j}.
\]
Thus, we obtain the equation
\[
\vc{a} \oplus \vc{b} = \vc{a} + \vc{b}.
\]
By induction, we conclude the result of this theorem.
\end{proof}

\subsubsection{Mass center system on a line}
Now we consider the uniqueness problem of mass center system for each of $\mathbb{E}^1$, $\mathbb{S}^1$, and $\mathbb{H}^1$. Surprisingly, the uniqueness of mass center system is not satisfied in $\mathbb{E}^1$ and $\mathbb{H}^1$, and there are uncountably many mass center systems under our 6 mass center axioms. Therefore we know that
the dimension condition $\dim\mathbb{X}^{n}\geq2$ is very crucial to the uniqueness problem.

\begin{theorem} There are uncountably many mass center systems on $\mathbb{E}^1$ and $\mathbb{H}^1$ satisfying the six mass center axioms.
\end{theorem}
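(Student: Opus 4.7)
The plan is to exhibit an uncountable family of mass center systems $\{\oplus_\beta\}_{\beta>0}$, each distinct from the standard system $\vc{a}\oplus\vc{b}=\vc{a}+\vc{b}$, by transporting ordinary vector addition back through an equivariant homogeneous embedding into the hyperbolic cone. For $\mathbb{E}^1$, identify the cone $\mathbb{R}^+\mathbb{E}^1\subset\mathbb{R}^2$ with elements $m[x]$, $x\in\mathbb{R}$, $m\geq 0$, and for each $\beta>0$ define
\[
\Psi_\beta\colon \mathbb{R}^+\mathbb{E}^1 \longrightarrow \mathbb{R}^+\mathbb{H}^1,\qquad m[x]\longmapsto m\bigl(\sinh(\beta x),\cosh(\beta x)\bigr),
\]
which is a mass-preserving, positively homogeneous bijection onto the forward timelike cone. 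Set
\[
\vc{a}\oplus_\beta \vc{b} \;:=\; \Psi_\beta^{-1}\bigl(\Psi_\beta(\vc{a}) + \Psi_\beta(\vc{b})\bigr),
\]
where $+$ is ordinary vector addition in $\mathbb{R}^2$. For $\mathbb{H}^1$, replace $\Psi_\beta$ with the self-map $m(\sinh x,\cosh x)\mapsto m(\sinh(\beta x),\cosh(\beta x))$ of $\mathbb{R}^+\mathbb{H}^1$; the standard system corresponds to $\beta=1$, and $\beta\neq 1$ will give the non-standard ones.

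The axioms of zero mass, single point, overlapping, permutation-and-partition, and continuity each follow immediately from the corresponding properties of vector addition in $\mathbb{R}^2$, together with the facts that $\Psi_\beta$ is continuous with continuous inverse on the open cone, sends $\vcz$ to $\vcz$, and satisfies $\Psi_\beta(r\vc{a})=r\Psi_\beta(\vc{a})$ for $r\geq 0$. The crucial step is the axiom of isometry invariance. A Euclidean translation $T_t\colon x\mapsto x+t$ acts linearly on $\mathbb{R}^+\mathbb{E}^1$ by $(Y,m)\mapsto (Y+tm,m)$, and a direct substitution yields
\[
\Psi_\beta\bigl(T_t(m[x])\bigr) = m\bigl(\sinh(\beta(x+t)),\cosh(\beta(x+t))\bigr) = B_{\beta t}\bigl(\Psi_\beta(m[x])\bigr),
\]
where $B_s$ is the linear Lorentz boost on $\mathbb{R}^2$; the Euclidean reflection $x\mapsto -x$ likewise corresponds to the hyperbolic spatial reflection $(Y,Z)\mapsto(-Y,Z)$. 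Because the corresponding target isometries are linear on the ambient $\mathbb{R}^2$, they commute with vector addition, hence with $\oplus_\beta$.

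To see that distinct $\beta$ give distinct systems, take $\vc{a}=[0]$ and $\vc{b}=[1]$ of unit mass: $\Psi_\beta(\vc{a})+\Psi_\beta(\vc{b})=(\sinh\beta,\,1+\cosh\beta)$ has Lorentz norm $\sqrt{(1+\cosh\beta)^2-\sinh^2\beta}=2\cosh(\beta/2)$, a strictly increasing function of $\beta>0$ which is never equal to the standard centered mass $2$. Thus $\{\oplus_\beta\}_{\beta>0}$ is an uncountable family, each member distinct from the standard system (and from each other); the same computation on $\mathbb{H}^1$ works with $\beta\neq 1$. The main obstacle is precisely the isometry invariance check: the construction works because the one-parameter translation subgroup of $\mathbb{E}^1$ (respectively the boost subgroup of $\mathbb{H}^1$) matches under $\Psi_\beta$ the one-parameter boost subgroup of the target $\mathbb{H}^1$ via the rescaling $t\mapsto\beta t$, both realized as linear maps on the ambient $\mathbb{R}^2$. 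This same linear intertwining would fail for an attempted target $\mathbb{S}^1$, whose compact isometry group forces $\beta$ to be constrained by the period, explaining why Theorem \ref{uniques1} nevertheless holds on $\mathbb{S}^1$.
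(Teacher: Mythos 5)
Your family $\{\oplus_\beta\}$ is precisely the paper's family $\mathcal{F}_k$ (with $k=\beta$): unwinding $\Psi_\beta^{-1}\bigl(\Psi_\beta(\vc{a})+\Psi_\beta(\vc{b})\bigr)$ reproduces the paper's $\mathfrak{M}_k$ and $\mathfrak{X}_k$ verbatim, and your observation that $\Psi_\beta$ intertwines Euclidean translations with linear Lorentz boosts is the same computation the paper carries out via the addition formula for $\sinh$. The only substantive addition is your explicit distinctness check giving centered mass $2\cosh(\beta/2)$, which the paper merely asserts; otherwise the two proofs coincide.
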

\begin{proof}For $N$ material vectors $\{m_i(x_i,1)\}_{i=1}^{N}$ in $\mathbb{E}^1$, we can define $\mathcal{F}_{k}\{m_i(x_i,1)\}_{i=1}^{N}$ $=\mathfrak{M}_k(\mathfrak{X}_k,1)$
for $k\in[0,\infty)$, where
\begin{align*}
\mathfrak{M}_k &=\begin{cases}\sqrt{\sum_{i=1}^N m_i^2+2\sum_{i\ne j}m_im_j\cosh k(x_i-x_j)},&\text{if } k\ne 0,\\\sum_{i=1}^N m_i,\quad\qquad&\text{if }k=0,\end{cases}\\
\mathfrak{X}_k &=\begin{cases}\frac{1}{k}\sinh^{-1}\left(\frac{1}{\mathfrak{M}_k}{\sum_{i=1}^N m_i\sinh(kx_i)}\right),&\text{if } k\ne 0,\\
    \frac{1}{\mathfrak{M}_0}\sum_{i=1}^N m_i x_i,&\text{if }k=0.\end{cases}
\end{align*}
 Then the mass center system $\mathcal{F}_{k}$ in $\mathbb{E}^1$ easily satisfies the all axioms except the \emph{axiom of isometry invariance}. And that is sufficient to prove that the system is invariant under the reflection $g_1(x)=-x$ and the translation $g_2(x)=x+p$.

 Hence we have to show that
 $$\mathcal{F}_{k}\{g_1(m_i(x_i,1))\}_{i=1}^{N}=\mathcal{F}_{k}\{m_i(-x_i,1)\}_{i=1}^{N}=\mathfrak{M}_k(-\mathfrak{X}_k,1)=g_1(\mathfrak{M}_k(\mathfrak{X}_k,1))$$ and $$\mathcal{F}_{k}\{g_2(m_i(x_i,1))\}_{i=1}^{N}=\mathcal{F}_{k}\{m_i(x_i+p,1)\}_{i=1}^{N}=\mathfrak{M}_k(\mathfrak{X}_k+p,1)=g_2(\mathfrak{M}_k(\mathfrak{X}_k,1))$$
 for $k\ne 0$.
 Let us check the only non-trivial equality $\mathcal{F}_{k}\{m_i(x_i+p,1)\}_{i=1}^{N}=\mathfrak{M}_k(\mathfrak{X}_k+p,1)$.

From the definition of $\mathfrak{M}_k$ and $\mathfrak{X}_k$, two relations $\mathfrak{M}_k^2=(\sum m_i\cosh(kx_i))^2-(\sum m_i\sinh(kx_i))^2$ and $\mathfrak{M}_k\sinh(k\mathfrak{X}_k)=\sum m_i\sinh(kx_i)$ are trivially derived, so additionally we obtain $\mathfrak{M}_k\cosh(k\mathfrak{X}_k)=\sum m_i\cosh(kx_i)$.

Hence we get
\begin{align*}
   &\mathcal{F}_{k}\{m_i(x_i+p,1)\}_{i=1}^{N}\\ =&\mathfrak{M}_k\left(\frac{1}{k}\sinh^{-1}\left(\frac{1}{\mathfrak{M}_k}{\sum m_i\sinh(k(x_i+p))}\right),1\right)\\
   =&\mathfrak{M}_k\left(\frac{1}{k}\sinh^{-1}\left(\frac{1}{\mathfrak{M}_k}\left(\cosh (kp)\sum m_i\sinh(kx_i)+\sinh(kp)\sum m_i\cosh(kx_i)\right)\right),1\right)\\
   =&\mathfrak{M}_k\left(\frac{1}{k}\sinh^{-1}\left(\cosh (kp)\sinh(k\mathfrak{X}_k)+\sinh(kp)\cosh(k\mathfrak{X}_k)\right),1\right)\\
   =&\mathfrak{M}_k\left(\frac{1}{k}\sinh^{-1}\sinh(k\mathfrak{X}_k+kp),1\right)\\
   =&\mathfrak{M}_k(\mathfrak{X}_k+p,1).
\end{align*}

For $N$ material vectors $\{m_i(\sinh x_i,\cosh x_i)\}_{i=1}^{N}$ in $\mathbb{H}^1$, we can define

\noindent $\mathcal{F}_{k}\{m_i(\sinh x_i,\cosh x_i)\}_{i=1}^{N}=\mathfrak{M}_k(\sinh \mathfrak{X}_k,\cosh \mathfrak{X}_k)$
for $k\in[0,\infty)$, using the same $\mathfrak{M}_k$ and $\mathfrak{X}_k$ as in the $\mathbb{E}^1$ case.
 The material vector system $\mathcal{F}_{k}$ in $\mathbb{H}^1$ also satisfies our six mass center axioms in a manner similar to the case in $\mathbb{E}^1$.

 Since $\mathcal{F}_{k_1}\ne \mathcal{F}_{k_2}$ for different $k_1, k_2\in[0,\infty)$, we conclude the proof.
\end{proof}
Note that $\mathcal{F}_{k}=\mathcal{F}_{-k}$. If we designate  a system where everything else is fixed but replaced by $\cos, \sin, \sin^{-1}$ instead of $\cosh, \sinh, \sinh^{-1}$ as the system $\mathcal{F}_{k}^s$ $(k>0)$, then the system $\mathcal{F}_{k}^s$ would satisfy the \emph{axiom of isometry invariance}, too. However, the \emph{axiom of single point} is not satisfied, since $\mathcal{F}_{k}^s\{m(\frac{\pi}{k},1)\}=m(0,1)$. This phenomenon on $\mathcal{F}_{k}^s$ corresponding to $\mathcal{F}_{k}$ essentially comes from the difference between $\sinh^{-1} \sinh x=x$ and $\sin^{-1} \sin x\ne x$. Therefore, we conjecture that there exists no other mass center system apart from $\mathcal{F}_{k}$ ($k\in [0,\infty$)) mentioned in $\mathbb{H}^1$ or $\mathbb{E}^1$.

\subsection{Mass Centers and centered masses of some  sets}
\subsubsection{Two Points and mass deviation}

In this subsection, we aim to discuss the location of the mass center of two material points and the associated centered mass. We also discuss the mass deviation between the centered mass and the total mass.

Let $\vc{a}$ and $\vc{b}$ be material vectors. Let 
\begin{equation}\label{s34}
    d([\vc{a}],[\vc{b}])=d, ~~d([\vc{a}],[\vc{a}\oplus\vc{b}])=d_1, ~~d([\vc{b}],[\vc{a}\oplus\vc{b}])=d_2. 
\end{equation}
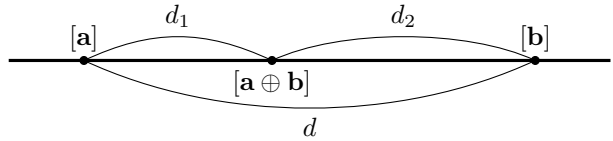
\begin{figure}[h!]
		\begin{center}\label{fig:2277}
   \begin{tikzpicture}
    \coordinate (A) at (1, 0);
    \coordinate (B) at (7, 0);
    \coordinate (P) at (3.5, 0);
    
    \draw[very thick] (0, 0) -- (8, 0);
    \filldraw[black] (A) circle (1.5pt) node[anchor=south] {$[\vc {a}]$};
    \filldraw[black] (B) circle (1.5pt) node[anchor=south] {$[\vc {b}]$};
    \filldraw[black] (P) circle (1.5pt) node[anchor=north] {$[\vc {a} \oplus \vc {b}]$};
    
    \draw (A) to [out=25, in=155, distance=1cm] node[midway, above] {$d_1$} (P);
    \draw (P) to [out=25, in=155, distance=1cm] node[midway, above] {$d_2$} (B);
    \draw (A) to [out=155, in=25, distance=-2cm] node[midway, below] {$d$} (B);
\end{tikzpicture}
   \caption{two material points and their mass center}
		\end{center}
	\end{figure}

The next proposition describes the centered mass of two material points.
\begin{prop}\emph{(centered mass of two material points)}
 Then
\begin{align}
     {m_{\vc{a}\oplus\vc{b}}}
     &=\sqrt{{m_{\vc{a}}}^2+{m_{\vc{b}}}^2+2m_{\vc{a}}m_{\vc{b}}\cos_{X}d}\label{f37} \\
     &=m_{\vc{a}}\cos_{X}d_1 + m_{\vc{b}}\cos_{X}d_2.\label{f38}
\end{align}   
\end{prop}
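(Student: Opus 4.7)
The plan is to reduce everything to a computation with the bilinear form $\langle\,\cdot\,,\cdot\,\rangle$ by invoking the uniqueness theorem. Since the proposition presupposes the existence of $[\vc{a}\oplus\vc{b}]$, we are in one of the cases where uniqueness has been proved, so $\vc{a}\oplus\vc{b}=\vc{a}+\vc{b}$. Then both identities (\ref{f37}) and (\ref{f38}) become identities purely about $\vc{a}+\vc{b}$ and the bracket, and the key tools are (\ref{f3}) relating $\langle\vc{a},\vc{a}\rangle$ to $m_{\vc{a}}^2$ and (\ref{f9}) relating $\langle\vc{a},\vc{b}\rangle$ to $m_{\vc{a}}m_{\vc{b}}\cos_X d$.

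For (\ref{f37}), I would split on geometry. In the Euclidean case the last coordinate of $\vc{a}+\vc{b}$ is $m_{\vc{a}}+m_{\vc{b}}$, so $m_{\vc{a}+\vc{b}}=m_{\vc{a}}+m_{\vc{b}}$, which matches the right-hand side since $\cos_X\equiv 1$. In the spherical or hyperbolic case I expand
\begin{equation*}
\langle\vc{a}+\vc{b},\vc{a}+\vc{b}\rangle=\langle\vc{a},\vc{a}\rangle+2\langle\vc{a},\vc{b}\rangle+\langle\vc{b},\vc{b}\rangle,
\end{equation*}
apply (\ref{f3}) to each diagonal term and (\ref{f9}) to the cross term (both of which contribute a common factor $\delta_X$), and then apply (\ref{f3}) once more to $\vc{a}\oplus\vc{b}=\vc{a}+\vc{b}$ on the left. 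The factors $\delta_X$ cancel and taking a positive square root yields (\ref{f37}).

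For (\ref{f38}), I would pair (\ref{f9}) with the substitution $\vc{a}\oplus\vc{b}=\vc{a}+\vc{b}$ at the two endpoints. Computing $\delta_X\langle\vc{a},\vc{a}\oplus\vc{b}\rangle=\delta_X(\langle\vc{a},\vc{a}\rangle+\langle\vc{a},\vc{b}\rangle)$ with (\ref{f3}) and (\ref{f9}) gives
\begin{equation*}
m_{\vc{a}\oplus\vc{b}}\cos_X d_1=m_{\vc{a}}+m_{\vc{b}}\cos_X d,
\end{equation*}
and symmetrically $m_{\vc{a}\oplus\vc{b}}\cos_X d_2=m_{\vc{b}}+m_{\vc{a}}\cos_X d$. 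Multiplying the first identity by $m_{\vc{a}}$, the second by $m_{\vc{b}}$, and adding produces
\begin{equation*}
m_{\vc{a}\oplus\vc{b}}\bigl(m_{\vc{a}}\cos_X d_1+m_{\vc{b}}\cos_X d_2\bigr)=m_{\vc{a}}^{\,2}+m_{\vc{b}}^{\,2}+2m_{\vc{a}}m_{\vc{b}}\cos_X d=m_{\vc{a}\oplus\vc{b}}^{\,2},
\end{equation*}
using (\ref{f37}) for the last equality. Dividing by $m_{\vc{a}\oplus\vc{b}}$ yields (\ref{f38}). The Euclidean case again simply reduces to $m_{\vc{a}}+m_{\vc{b}}=m_{\vc{a}}+m_{\vc{b}}$.

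The only delicate point is the degenerate case $m_{\vc{a}\oplus\vc{b}}=0$, where the division in the last step fails and $d_1,d_2$ are undefined. By Proposition \ref{prop33} this happens only when $\mathbb{X}^n=\mathbb{S}^n$ with $[\vc{b}]$ antipodal to $[\vc{a}]$ and $m_{\vc{a}}=m_{\vc{b}}$, in which case the statement about $d_1,d_2$ is vacuous; (\ref{f37}) still holds trivially because $m_{\vc{a}}^{\,2}+m_{\vc{b}}^{\,2}+2m_{\vc{a}}m_{\vc{b}}\cos\pi=0$. Apart from this boundary case, everything is routine bilinear algebra once the uniqueness theorem is in hand.
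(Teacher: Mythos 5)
Your proposal is correct and follows essentially the same route as the paper: both reduce to $\vc{a}\oplus\vc{b}=\vc{a}+\vc{b}$ via the uniqueness theorem and then evaluate $\langle\vc{a}+\vc{b},\vc{a}+\vc{b}\rangle$ with (\ref{f3}) and (\ref{f9}), the paper obtaining (\ref{f38}) directly from the split $\langle\vc{a},\vc{a}+\vc{b}\rangle+\langle\vc{b},\vc{a}+\vc{b}\rangle$ while you pass through the equivalent intermediate identities $m_{\vc{a}\oplus\vc{b}}\cos_X d_1=m_{\vc{a}}+m_{\vc{b}}\cos_X d$ and its counterpart before recombining. Your explicit treatment of the degenerate case $m_{\vc{a}\oplus\vc{b}}=0$ is a small point of extra care that the paper's proof leaves implicit.
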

\begin{proof}
In the non-Euclidean case, Equation (\ref{f37}) is derived from Equations (\ref{f3}) and (\ref{f9}) as follows:
\begin{align*}
    \delta_{x} {m_{\vc{a} \oplus \vc{b}}}^2 
    &= \langle \vc{a} + \vc{b}, \vc{a} + \vc{b} \rangle \\
    &= \langle \vc{a}, \vc{a} \rangle + \langle \vc{b}, \vc{b} \rangle + 2 \langle \vc{a}, \vc{b} \rangle \\
    &= \delta_{X} \left\{ m_{\vc{a}}^2 + m_{\vc{b}}^2 + 2 m_{\vc{a}} m_{\vc{b}} \cos_{X} d \right\}.
\end{align*}
Equation (\ref{f38}) is similarly derived from Equations (\ref{f3}) and (\ref{f9}) as follows:
\begin{align*}
    \delta_{X} {m_{\vc{a} \oplus \vc{b}}}^2
    &= \langle \vc{a} + \vc{b}, \vc{a} + \vc{b} \rangle \\
    &= \langle \vc{a}, \vc{a} + \vc{b} \rangle + \langle \vc{b}, \vc{a} + \vc{b} \rangle \\
    &= \delta_{X} \left\{ m_{\vc{a}} m_{\vc{a} \oplus \vc{b}} \cos_{X} d_1 + m_{\vc{b}} m_{\vc{a} \oplus \vc{b}} \cos_{X} d_2 \right\} \\
    &= \delta_{X} {m_{\vc{a} \oplus \vc{b}}} \left\{ m_{\vc{a}} \cos_{X} d_1 + m_{\vc{b}} \cos_{X} d_2 \right\}.
\end{align*}
In the Euclidean case, Equations (\ref{f37}) and (\ref{f38}) are trivial.
\end{proof}

According to the above proposition, the center of mass of two points depends not only on their masses but also on the distance between them.

The next proposition describes the location of the mass center of two material points. This proposition is called the \textit{lever law}.

\begin{prop}[Lever Law: location of mass center]\label{lever} Under the condition (\ref{s34}),
   we have
   \begin{align}
       & m_{\vc{a}} \sin_{X} d_1 = m_{\vc{b}} \sin_{X} d_2, \label{le1}      
   \end{align}
\end{prop}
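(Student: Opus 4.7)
The plan is to reduce to the two-dimensional subspace $V=\mathrm{Span}\{\vc{a},\vc{b}\}\subset\mathbb{R}^{n+1}$ and read off the lever law as the vanishing of one component. By Theorem \ref{unique} we have $\vc{a}\oplus\vc{b}=\vc{a}+\vc{b}$, so the material vector of the mass center lies in $V$, and by Proposition \ref{prop:mass center is on segment} the point $\vc{c}:=[\vc{a}\oplus\vc{b}]$ lies on the geodesic $\gamma$ through $[\vc{a}]$ and $[\vc{b}]$.

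First, in the non-Euclidean case, I would pick the unit tangent vector $\vc{n}\in V$ to $\gamma$ at $\vc{c}$, oriented so that moving from $\vc{c}$ in the direction $\vc{n}$ travels toward $[\vc{b}]$. Then $\{\vc{n},\vc{c}\}$ is an orthonormal basis of $V$ with respect to $\langle\cdot,\cdot\rangle$ (with $\vc{n}$ positive and $\vc{c}$ of norm $\delta_X$), and the standard arclength parametrization of $\gamma$ gives the expansions
\begin{equation*}
[\vc{a}] \;=\; \cos_X d_1\,\vc{c} \;-\; \sin_X d_1\,\vc{n}, \qquad
[\vc{b}] \;=\; \cos_X d_2\,\vc{c} \;+\; \sin_X d_2\,\vc{n}.
\end{equation*}
Multiplying by $m_{\vc{a}}$ and $m_{\vc{b}}$ respectively and adding yields
\begin{equation*}
\vc{a}+\vc{b}\;=\;\bigl(m_{\vc{a}}\cos_X d_1+m_{\vc{b}}\cos_X d_2\bigr)\vc{c}\;+\;\bigl(-m_{\vc{a}}\sin_X d_1+m_{\vc{b}}\sin_X d_2\bigr)\vc{n}.
\end{equation*}
On the other hand, $\vc{a}+\vc{b}=\vc{a}\oplus\vc{b}=m_{\vc{a}\oplus\vc{b}}\vc{c}$ has no $\vc{n}$-component, so the coefficient of $\vc{n}$ must vanish, which is exactly (\ref{le1}). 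As a sanity check, reading off the coefficient of $\vc{c}$ recovers formula (\ref{f38}).

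For $\mathbb{X}^n=\mathbb{E}^n$ the formula $[\vc{a}\oplus\vc{b}]=(m_{\vc{a}}[\vc{a}]+m_{\vc{b}}[\vc{b}])/(m_{\vc{a}}+m_{\vc{b}})$ is immediate from Theorem \ref{unique} together with the normalization $x_{n+1}=1$, so $d_1=\tfrac{m_{\vc{b}}}{m_{\vc{a}}+m_{\vc{b}}}d$ and $d_2=\tfrac{m_{\vc{a}}}{m_{\vc{a}}+m_{\vc{b}}}d$, giving $m_{\vc{a}}d_1=m_{\vc{b}}d_2$ directly. Alternatively one repeats the decomposition argument using a unit Euclidean direction $\vc{n}\in T_{\vc{c}}\mathbb{E}^n$, in which case $\sin_X d_i=d_i$ and the same computation goes through.

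I do not expect a serious obstacle: once the uniqueness theorem is invoked, everything is reduced to elementary linear algebra in a two-dimensional plane equipped with the form $\langle\cdot,\cdot\rangle$. The only point requiring mild care is the orientation of $\vc{n}$ and the verification that $\vc{c}$ lies between $[\vc{a}]$ and $[\vc{b}]$ on the shorter segment, which follows from $\vc{a}+\vc{b}$ being a nonnegative combination of $[\vc{a}]$ and $[\vc{b}]$ under the hypothesis $\vc{a}+\vc{b}\neq\vcz$ that is implicit in (\ref{s34}).
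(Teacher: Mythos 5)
Your proof is correct, but it takes a different (and arguably cleaner) route than the paper. The paper proves the lever law purely algebraically: it first establishes the two centered-mass formulas
$m_{\vc{a}\oplus\vc{b}}^2=m_{\vc{a}}^2+m_{\vc{b}}^2+2m_{\vc{a}}m_{\vc{b}}\cos_X d$ (Equation (\ref{f37}), from $\langle\vc{a}+\vc{b},\vc{a}+\vc{b}\rangle$) and $m_{\vc{a}\oplus\vc{b}}=m_{\vc{a}}\cos_X d_1+m_{\vc{b}}\cos_X d_2$ (Equation (\ref{f38}), from pairing against $\vc{a}+\vc{b}$ itself), then squares the second, subtracts the first, and uses $d=d_1+d_2$ to land on $\delta_X\left(m_{\vc{a}}\sin_X d_1-m_{\vc{b}}\sin_X d_2\right)^2=0$. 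Your argument instead expands $[\vc{a}]$ and $[\vc{b}]$ in the orthonormal basis $\{\vc{n},\vc{c}\}$ of $\mathrm{Span}\{\vc{a},\vc{b}\}$ and reads off the lever law as the vanishing of the $\vc{n}$-component of $\vc{a}+\vc{b}=m_{\vc{a}\oplus\vc{b}}\vc{c}$; the $\vc{c}$-component simultaneously recovers (\ref{f38}), as you note. The two arguments are dual to each other — the paper's subtraction of squares is precisely the computation of the squared $\vc{n}$-coefficient via the bilinear form, whereas you compute that coefficient directly — but yours avoids the squaring step and therefore never has to extract a sign from a squared identity, at the modest cost of having to justify the orientation of $\vc{n}$ and the betweenness of $\vc{c}$, which you correctly dispatch by observing that $\vc{a}+\vc{b}$ is a nonnegative combination of $[\vc{a}]$ and $[\vc{b}]$. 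Both proofs rest on the same foundation, namely $\vc{a}\oplus\vc{b}=\vc{a}+\vc{b}$ from Theorem \ref{unique}.
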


\begin{proof}
    We omit the proof for Euclidean case and now give a proof for non-euclidean case.

    Squaring both sides of Equations (\ref{f37}) and (\ref{f38}), and then subtracting one from the other, we obtain Equation (\ref{le1}):
    \begin{align*}
        0 &= {m_{\vc{a}}}^2 + {m_{\vc{b}}}^2 +2 {m_{\vc{a}}} {m_{\vc{b}}}\cos_X (d_1+d_2)-\left(m_{\vc{a}}\cos_X d_1+m_{\vc{b}}\cos_X d_2\right)^2\\
        &= \delta_{X} \left\{ m_{\vc{a}} \sin_X d_1 - m_{\vc{b}} \sin_X d_2 \right\}^2.
    \end{align*}    
\end{proof}

The next proposition describes the deviation between the centered mass $m_{\text{cen}} \{\vc{a}, \vc{b}\}$ $= m_{\vc{a \oplus b}}$ and the total mass \( m_{\text{tot}} \{\vc{a}, \vc{b}\} = m_{\vc{a}} + m_{\vc{b}} \).

\begin{prop}[Deviation between the centered mass and the total mass]
    We have
    \[
    \left( m_{\text{cen}} \{\vc{a}, \vc{b}\} \right)^2 - \left( m_{\text{tot}} \{\vc{a}, \vc{b}\} \right)^2 = 2 m_{\vc{a}} m_{\vc{b}} \left( \cos_X d - 1 \right).
    \]
\end{prop}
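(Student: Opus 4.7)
The plan is to derive this directly from the centered-mass formula already established in Equation (\ref{f37}), namely
\[
m_{\vc{a}\oplus\vc{b}}^2 = m_{\vc{a}}^2 + m_{\vc{b}}^2 + 2 m_{\vc{a}} m_{\vc{b}} \cos_X d,
\]
so essentially no new geometric input is required; the statement is an algebraic reformulation that highlights the comparison with the total mass.

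First, I would expand the square of the total mass,
\[
\left(m_{\text{tot}}\{\vc{a},\vc{b}\}\right)^2 = (m_{\vc{a}} + m_{\vc{b}})^2 = m_{\vc{a}}^2 + m_{\vc{b}}^2 + 2 m_{\vc{a}} m_{\vc{b}}.
\]
Then, using the identification $\left(m_{\text{cen}}\{\vc{a},\vc{b}\}\right)^2 = m_{\vc{a}\oplus\vc{b}}^2$ together with Equation (\ref{f37}), I would subtract to obtain
\[
\left(m_{\text{cen}}\{\vc{a},\vc{b}\}\right)^2 - \left(m_{\text{tot}}\{\vc{a},\vc{b}\}\right)^2 = 2 m_{\vc{a}} m_{\vc{b}} (\cos_X d - 1),
\]
which is the claimed identity.

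There is no real obstacle here: the Euclidean case ($\cos_X d = 1$) reproduces the expected equality $m_{\text{cen}} = m_{\text{tot}}$, while for $\mathbb{S}^n$ and $\mathbb{H}^n$ the formula quantifies precisely how $m_{\text{cen}}$ falls short of, or exceeds, $m_{\text{tot}}$: for $\mathbb{S}^n$ the factor $\cos d - 1 \leq 0$ gives $m_{\text{cen}} \leq m_{\text{tot}}$, while for $\mathbb{H}^n$ the factor $\cosh d - 1 \geq 0$ gives $m_{\text{cen}} \geq m_{\text{tot}}$, with equality only when $d = 0$. The only thing that matters is to cite (\ref{f37}) and perform the subtraction cleanly.
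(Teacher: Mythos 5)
Your proposal is correct and is exactly the paper's argument: the paper also derives the identity as an immediate algebraic consequence of Equation (\ref{f37}), and your concluding remarks on the sign of $\cos_X d - 1$ mirror the discussion the paper places right after the proposition. Nothing is missing.
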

\begin{proof}
    This proposition follows directly from Equation (\ref{f37}).
\end{proof}

According to the above proposition:\\
- In Euclidean space, the centered mass and the total mass are identical.\\
- In spherical space, the centered mass is smaller than the total mass, and the difference increases as the distance between the two points grows.\\
- In hyperbolic space, the centered mass is greater than the total mass, and the difference increases as the distance between the two points grows.

\section{Mass Centers of Manifolds }\label{section:mass centers of manifolds}
\subsection{Axioms and Uniqueness}

Let $M^k$ be a bounded $k$-dimensional manifold in $\mathbb{X}^n$ with a mass distribution $\rho$. The \textbf{total mass} of $M^k$ is defined as the integral of the mass distribution over $M^k$. It is denoted as
\[
m_{\text{tot}}(M^k, \rho)~\text{ or, simply, }~m_{\text{tot}}(M^k).
\]

By definition, a \textbf{mass center system for bounded k-manifolds} on $\mathbb{X}^{n}$ is a function
\[
\mathcal{F}\,:\,\text{bounded $k$-manifold with finite total mass}\mapsto\text{a material vector}
\]
which satisfies the \emph{axioms} we suggest below. By definition, we call the material vector $\mathcal{F}(M^k)$ the \textbf{mass center vector} of $M^k$, the mass of this mass center vector, the \textbf{centered mass} of $M^k$ and the point $\left[\mathcal{F}(M^k)\right]$, the \textbf{mass center} of $M^k$. Even though these 채concepts depend not only on $M^k$ but also on $\mathcal{F}$ at the moment, we denote the centered mass just as
\[
m_{\text{cen}}(M^k,\rho)~\text{ or, simply }~m_{\text{cen}}(M^k).
\]
and we will show the uniqueness of this mass center system $\mathcal{F}$.

In summery, for a $k$-submanifold $M^k$ with a mass distribution $\rho$ and with finite total mass,
\begin{equation}\label {manifold mass}
\mathcal{F}(M^k)=m_{\text{cen}}(M^k,\rho)\left[\mathcal{F}(M^k)\right]
\quad\text{and}\quad m_{\text{tot}}(M^k,\rho)=\int_{M^k}\rho\,\dd{V}_k.
\end{equation}
Now, we list the system of axioms for mass center of submanifolds.
\begin{itemize}
\item \emph{axiom of partition}: Let $M^k$ be a union of k-submanifolds. For any finite partition $M^k=\displaystyle\bigcup M_i$ into $k$-submanifolds $M_i$,
    \[
    \mathcal{F}(M^k)=\oplus_{i} \mathcal{F}(M_i).
    \]
\item \emph{axiom of convex hull}
: The mass center $\left[\mathcal{F}(M^k)\right]$ is located in the convex hull of $M^k$.
\item \emph{axiom of small object}
\footnote{\emph{Axiom of small object} together with \emph{axiom of convex hull} is the counterpart of \emph{axiom of single point}.}
: If the diameter of a submanifold is sufficiently small, then the centered mass and the total mass of it are sufficiently close, and the distance between this submanifold and the mass center of it are also sufficiently close, too. In more detail,
\begin{align*}
 \lim_{\text{diam}(M^k)\to0} \frac{m_{\text{cen}}(M^k,\rho)}{m_{\text{tot}}(M^k,\rho)}=1.
\end{align*}
\end{itemize}

The space $\mathbb{X}^n$ has its own metric $d$. Give the usual metric on the ambient space $\mathbb{R}^{n+1}$. Consider the metric $\tilde{d}$ on $\mathbb{X}^n$ induced by the metric on the ambient space:
\[
\tilde{d}(\vc{r}_1,\vc{r}_2)=|\vc{r}_1-\vc{r}_2|.
\]
Then the topologies induced by these two metrics are same.
The diameter of a set with respect to the metric $d$ is sufficiently small if and only if the diameter of this set with respect to $\tilde{d}$ is sufficiently small. In relation with the \emph{axiom of small object}, we may choose the notion of \emph{diameter} as
\[
\text{diam}(L)=\sup\{|\vc{v}-\vc{w}|\,|\,\vc{v},\vc{w}\in L\}
\quad\text{for } L\subset\mathbb{X}^n.
\]
We introduce a notation: For $\mathbb{X}\in\mathbb{X}^n$ and $\delta>0$,
\[
B_{\delta}(\mathbb{X}):=\{\vc{r}\in\mathbb{X}^{n}\,|\,|\vc{r}-\mathbb{X}|<\delta\} =\{\vc{v}\in\mathbb{R}^{n+1}\,|\,|\vc{v}-\mathbb{X}|<\delta\}\cap\mathbb{X}^n=:\mathbb{B}_{\delta}(\mathbb{X})\cap\mathbb{X}^n.
\]
It is obvious that $\text{diam}\left(B_{\delta}(\mathbb{X})\right)<2\delta$ for $\mathbb{X}\in\mathbb{X}^n\ne \mathbb{E}^n$, and $\text{diam}\left(B_{\delta}(\mathbb{X})\right)=2\delta$ for $\mathbb{X}\in \mathbb{E}^n$.

Note that for every $\mathbb{X}\in\mathbb{X}^n$ and for all sufficiently small $\delta$, the closure $\overline{B_{\delta}(\mathbb{X})}$ is convex\footnote{A subset $D\in\mathbb{X}^n$  is defined to be convex if any geodesic segment connecting any two points in $D$ lies within $D$. In particular, the set $B_{\delta}(\mathbb{X})$ is not convex,  if $\mathbb{X}^n=\mathbb{S}^n$ and $\delta>\sqrt{2}$.}.  With this property, we see that for some subset $L$ in $\overline{B_{\delta}(\mathbb{X})}$, the convex hull of $L$ is contained in $\overline{B_{\delta}(\mathbb{X})}$.\\

We present the uniqueness theorem now.
\begin{theorem}\label{manifold material vector} Let $M^k$ be a bounded $k$-submanifold in $\mathbb{X}^n$ on which mass distribution $\rho$ is defined and of which the total mass is finite. Then
  \[
  \mathcal{F}\left(M^k\right)=\int_{M^k}\rho\vc{r}\,\dd{V}_k,
  \]
where the vector $\vc{r}\in M^k\subset\mathbb{R}^{n+1}$ means the position vector.
\end{theorem}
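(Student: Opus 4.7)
The plan is to reduce this to the finite-point formula of Theorem~\ref{unique} — which says $\oplus$ is just ordinary vector addition in $\mathbb{R}^{n+1}$ — by partitioning $M^k$ into arbitrarily fine pieces and matching each piece to a single Riemann-sum term via the axioms of small object and convex hull.

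Fix $\varepsilon > 0$. By the \emph{axiom of small object}, choose $\delta > 0$ small enough that every submanifold $L \subset \mathbb{X}^n$ of diameter less than $\delta$ satisfies $|m_{\text{cen}}(L,\rho) - m_{\text{tot}}(L,\rho)| < \varepsilon\, m_{\text{tot}}(L,\rho)$, and shrink $\delta$ further so that $\overline{B_{\delta}(\vc{p})}$ is convex for every $\vc{p} \in \mathbb{X}^n$. Since $M^k$ is bounded, partition it as $M^k = \bigsqcup_{i=1}^N M_i$ where each $M_i$ has diameter less than $\delta$, and pick a sample point $\vc{r}_i \in M_i$. By the \emph{axiom of convex hull}, $[\mathcal{F}(M_i)]$ lies in the convex hull of $M_i \subset \overline{B_{\delta}(\vc{r}_i)}$, so $|[\mathcal{F}(M_i)] - \vc{r}_i| < 2\delta$. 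Treating $\mathcal{F}(M_i)$ as a vector in $\mathbb{R}^{n+1}$ and combining the two estimates yields
\[
\mathcal{F}(M_i) = m_{\text{tot}}(M_i,\rho)\,\vc{r}_i + E_i, \qquad \|E_i\| \le (2\delta + \varepsilon R)\, m_{\text{tot}}(M_i,\rho),
\]
where $R := \sup_{\vc{r} \in M^k}\|\vc{r}\| < \infty$.

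Now the \emph{axiom of partition} together with Theorem~\ref{unique} gives
\[
\mathcal{F}(M^k) = \bigoplus_{i=1}^N \mathcal{F}(M_i) = \sum_{i=1}^N \mathcal{F}(M_i) = \sum_{i=1}^N m_{\text{tot}}(M_i,\rho)\,\vc{r}_i + \sum_{i=1}^N E_i.
\]
Since $m_{\text{tot}}(M_i,\rho) = \int_{M_i} \rho\,\dd{V}_k$, the first sum on the right is a Riemann sum for $\int_{M^k} \rho\vc{r}\,\dd{V}_k$, while the error satisfies $\|\sum_{i} E_i\| \le (2\delta + \varepsilon R)\, m_{\text{tot}}(M^k,\rho)$. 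Refining the partition and sending $\varepsilon \to 0$ extracts the desired formula, since the left-hand side $\mathcal{F}(M^k)$ does not depend on the choice of partition.

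The main obstacle is the per-piece error bookkeeping: a Riemann-sum style argument only closes if $\|E_i\|$ scales proportionally to $m_{\text{tot}}(M_i,\rho)$ rather than merely being small in absolute terms, for otherwise the accumulated error could blow up as $N \to \infty$. Fortunately, both axioms are tailored precisely for this purpose: the \emph{axiom of small object} delivers the multiplicative control on the mass ratio, while the \emph{axiom of convex hull} confines $[\mathcal{F}(M_i)]$ spatially inside $\overline{B_{\delta}(\vc{r}_i)}$, so both error sources enter as prefactors on $m_{\text{tot}}(M_i,\rho)$ and the global error remains bounded by $(2\delta + \varepsilon R)\, m_{\text{tot}}(M^k,\rho)$, which vanishes in the limit.
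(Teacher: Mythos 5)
Your proposal is correct and follows essentially the same route as the paper: partition $M^k$ into pieces of small diameter, use the \emph{axiom of small object} and the \emph{axiom of convex hull} to show each $\mathcal{F}(M_i)$ differs from $m_{\text{tot}}(M_i,\rho)\,\vc{r}_i$ by an error proportional to $m_{\text{tot}}(M_i,\rho)$, and then sum via the \emph{axiom of partition} and Theorem~\ref{unique} to recognize a Riemann sum. The only cosmetic difference is that the paper extracts a uniform smallness scale from a compactness (finite subcover) argument with location-dependent $\delta(\vc{x})$, whereas you assume a single $\delta$ directly; the error bookkeeping is otherwise identical.
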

\begin{proof}
Let $\epsilon>0$ be given.\\
Let $\overline{M^k}$ be the closure of $M^k$. The closure $\overline{M^k}$ is compact. By the \emph{axiom of small object}, for all $\mathbb{X}\in \overline{M^k}$, there exists $\delta=\delta(\mathbb{X})$ satisfying that $\delta(\mathbb{X})<\epsilon$ and
\begin{equation}\label{eqn:theorem2 0-th}
\left|\frac{m_{\text{cen}}(L)}{m_{\text{tot}}(L)} -1\right|<\epsilon ~\text{ whenever }~L\subset \overline{B_{\delta}(\mathbb{X})}.
\end{equation}
The collection $\left\{B_{\delta}(\mathbb{X})\,|\,\mathbb{X}\in\overline{M^k}\right\}$ is an open cover of $\overline{M^k}$. From this cover, we can choose a finite subcover $\{B_1,\dots,B_m\}$ of $\overline{M^k}$ and, consequently, of $M^k$. Consider the canonical partition of $M^k$ induced by this finite subcover: The \emph{part} in this canonical partition containing an element $\vc{r}$ is the intersection
\[
\left(\bigcap_{\vc{r}\in B_j}\overline{B_j}\right)\cap\left(\bigcap_{\vc{r}\notin B_k}(B_k)^c\right)\cap M^k.
\]
It is clear that this canonical partition is a finite one. There exists a finer partition $\{M_1,\dots,M_N\}$ of $M^k$ than this canonical partition such that, for some points $\vc{r}_i$ chosen from each $M_i$, we get
~$\displaystyle
\left|\sum_{i=1}^{N} \vc{r}_i\int_{M_i}\rho\,\dd{V}_k-\int_{M^k}\rho\vc{r}\,\dd{V}_k\right| <\epsilon
$ by Riemann integrable of $\int_{M^k}\rho\vc{r}\,\dd{V}_k$. In other notation, this is
\begin{equation}\label{eqn:theorem2 1-st}
{\left|\sum_{i=1}^{N}m_{\text{tot}}(M_i)\vc{r}_i-\int_{M^k}\rho\vc{r}\,\dd{V}_k\right| <\epsilon}.
\end{equation}
Each $M_i$ is contained in some $\overline{B_{\delta}(\mathbb{X})}$. By (\ref{eqn:theorem2 0-th}), we have
~$
\displaystyle\left|\frac{m_{\text{cen}}(M_i)}{m_{\text{tot}}(M_i)} -1\right|<\epsilon,
$\\
and, consequently,
\begin{equation}\label{eqn:theorem2 2-nd}
{\left|m_{\text{cen}}(M_i)-m_{\text{tot}}(M_i)\right|<\epsilon m_{\text{tot}}(M_i)}.
\end{equation}
Denote as $\mathcal{F}(M_i)=\vc{g}_i$. This is that $\vc{g}_i=m_{\text{cen}}(M_i)[\vc{g}_i]$.
\\
By the \emph{axiom of partition}, we have
\begin{equation}\label{eqn:theorem2 3-rd}
\displaystyle\mathcal{F}\left(M^k\right)=\oplus\vc{g}_i=\sum\vc{g}_i.
\end{equation}
By the \emph{axiom of convex hull}, the point $[\vc{g}_i]$ is contained in the convex hull of $M_i$. Since the convex hull of each $M_i$ is contained in  some $\overline{B_{\delta}(\mathbb{X})}$. In summery, we see
\[
\vc{r}_i\in M_i\subset\text{ convex hull of }M_i\subset\overline{B_{\delta}(\mathbb{X})}
\quad\text{and}\quad
[\vc{g}_i]\in\text{ convex hull of }M_i\subset\overline{B_{\delta}(\mathbb{X})}.
\]
Since the diameter of $\overline{B_{\delta}(\mathbb{X})}$ is less than or equal to $2\delta(\mathbb{X})$ which is less than $2\epsilon$, we see
\begin{equation}\label{eqn:theorem2 4-th}
{|[\vc{g}_i]-\vc{r}_i|<2\epsilon}.
\end{equation}
Then
\begin{align*}
  &\left|\vc{g}_i-m_{\text{tot}}(M_i)\vc{r}_i\right|
  \\
  &=\left|m_{\text{cen}}(M_i)[\vc{g}_i]-m_{\text{tot}}(M_i)\vc{r}_i\right|
  \\
  &\leq\left|m_{\text{cen}}(M_i)[\vc{g}_i]-m_{\text{tot}}(M_i)[\vc{g}_i]\right| +\left|m_{\text{tot}}(M_i)[\vc{g}_i]-m_{\text{tot}}(M_i)\vc{r}_i\right|
  \\
  &=\left|m_{\text{cen}}(M_i)-m_{\text{tot}}(M_i)\right|\,\left|[\vc{g}_i]\right| +m_{\text{tot}}(M_i)\left|[\vc{g}_i]-\vc{r}_i\right|
  \\
  &<\epsilon m_{\text{tot}}(M_i)\left|[\vc{g}_i]\right|+m_{\text{tot}}(M_i)2\epsilon
  \quad(\text{\small{by} (\ref{eqn:theorem2 2-nd}) and (\ref{eqn:theorem2 4-th})})
  \\
  &=\epsilon\left(\left|[\vc{g}_i]\right|+2\right)m_{\text{tot}}(M_i).
\end{align*}
Meanwhile, since each point $[\vc{g}_i]$ is in the convex hull of the bounded submainfold $M^k$ which is also bounded, there exists $K>0$ such that $|[\vc{g}_i]|<K$ for all $i$. Now, we can see that
\begin{equation}\label{eqn:theorem2 5-th}
\left|\vc{g}_i-m_{\text{tot}}(M_i)\vc{r}_i\right|
<\epsilon\,\left(K+2\right)m_{\text{tot}}(M_i).
\end{equation}
Then
\begin{align*}
  &\left|\mathcal{F}\left(M^k\right)-\int_{M^k}\rho\vc{r}\,\dd{V}_k\right|
  \\
  &=\left|\sum\vc{g}_i-\int_{M^k}\rho\vc{r}\,\dd{V}_k\right|
  \quad(\text{\small{by} (\ref{eqn:theorem2 3-rd})})
  \\
  &\le\left|\sum\vc{g}_i-\sum m_{\text{tot}}(M_i)\vc{r}_i\right| +\left|\sum m_{\text{tot}}(M_i)\vc{r}_i-\int_{M^k}\rho\vc{r}\,\dd{V}_k\right|
  \\
  &\le\sum\left|\vc{g}_i-m_{\text{tot}}(M_i)\vc{r}_i\right| +\left|\sum m_{\text{tot}}(M_i)\vc{r}_i-\int_{M^k}\rho\vc{r}\,\dd{V}_k\right|
  \\
  &<\sum \epsilon\left(K+2\right)m_{\text{tot}}(M_i)+\epsilon
  \quad(\text{\small{by} (\ref{eqn:theorem2 1-st}) and (\ref{eqn:theorem2 5-th})})
  \\
  &=\epsilon\left\{\left(K+2\right)m_{\text{tot}}(M^k)+1\right\}.
\end{align*}
Since $\epsilon$ is arbitrary and $\left\{\left(K+2\right)m_{\text{tot}}\left(M^k\right)+1\right\}$ is a constant, we can conclude that
\[
\mathcal{F}\left(M^k\right)=\int_{M^k}\rho\vc{r}\,\dd{V}_k.
\]
\end{proof}
\subsection{Mass Centers and centered masses of some manifolds}
We will call 1-dimensional centered mass as centered length, 2-dimensional centered mass as centered area, and higher dimensional centered mass as centered volume.  The following three sections provide direct examples of the calculation of the centered volume,

\subsubsection{Centered mass of a non-Euclidean $n$-dimensional ball}
For a spherical $n$-dimensional ball $B^n$ with radius $r~(0<r\le \pi)$, let us find the centered volume and mass center of the ball $B^n$.

By using the spherical coordinate $(\sin \varphi_1 \sin \varphi_2\cdots \sin \varphi_{n-1} \sin\varphi_n, \sin \varphi_1 \sin \varphi_2 $
$\cdots
\sin \varphi_{n-1} \cos\varphi_n, \ldots, \sin \varphi_1 \cos \varphi_2,  \cos \varphi_1)$, the ball $B^n$ lies on  $\mathbb{S}^n$ within $0\le\varphi_1\le r, 0\le\varphi_2,\ldots,\varphi_{n-1}\le \pi,  0\le\varphi_n\le 2\pi$. Condition $\rho=1$ and Theorem \ref{manifold material vector} shows that
\[  \mathcal{F}\left(B^n\right)=\int_{B^n}\vc{r}\,\dd{V}_n.
  \]
Also formula (\ref{manifold mass}) shows
$$\mathcal{F}(B^n)=m_{\text{cen}}(B^n)\left[\mathcal{F}(B^n)\right],$$
hence we know that
\begin{equation}\label{narea}
\mathcal{F}(B^n)=\left| \int_{B^n}  \vc{r}\, \dd V_n\right| \text{ and }  \left[\mathcal{F}(B^n)\right]=\left[\int_{B^n} \vc{r}\, \dd V_n\right]. \end{equation}
Since $\dd V_n=\sin^{n-1}\varphi_1 \sin^{n-2}\varphi_2\cdots \sin\varphi_{n-1} \dd \varphi_1\cdots \dd \varphi_n$, we have
$$\int_{B^n} \, \vc{r}~ \dd V_n =\int_0^{2\pi}\!\int_0^{\pi}\!\!\cdots\!\int_0^r (\sin \varphi_1 \sin \varphi_2\cdots \sin \varphi_{n-1} \sin\varphi_n, \ldots,\sin \varphi_1 \cos \varphi_2,  \cos \varphi_1)  \dd V_n,
$$
and we can calculate that
\begin{align*}
&\int_0^{2\pi}\!\!\int_0^{\pi}\!\cdots\!\int_0^r (\sin \varphi_1 \cdots \sin \varphi_n) \sin^{n-1} \varphi_1\cdots \sin \varphi_{n-1}\dd \varphi_1\cdots\dd \varphi_n\\
=&\int_0^r \sin^n \varphi_1 \dd \varphi_1\cdots \int_0^{\pi} \sin^2 \varphi_{n-1} \dd \varphi_{n-1} \int_0^{2\pi} \sin\varphi_{n} \dd \varphi_{n}\\=&0,\\
\vdots\\
&\int_0^{2\pi}\!\!\int_0^{\pi}\!\cdots\!\int_0^r (\sin \varphi_1 \cos \varphi_2) \sin^{n-1} \varphi_1\cdots \sin \varphi_{n-1}\dd \varphi_1\cdots\dd \varphi_n\\
=&\int_0^r \sin^n \varphi_1 \dd \varphi_1 \int_0^\pi \cos \varphi_2\sin^{n-2} \varphi_2 \dd \varphi_2 \cdots \int_0^{\pi} \sin \varphi_{n-1} \dd \varphi_{n-1} \int_0^{2\pi} 1\dd \varphi_{n}\\=&0,\\
\end{align*}
and
\begin{align*}
&\int_0^{2\pi}\!\!\int_0^{\pi}\!\cdots\!\int_0^r (\cos \varphi_1) \sin^{n-1} \varphi_1\cdots \sin \varphi_{n-1}\dd \varphi_1\cdots\dd \varphi_n\\
=&\int_0^r \cos \varphi_1 \sin^{n-1} \varphi_1 \dd \varphi_1 \int_0^{\pi} \sin^{n-2} \varphi_2\dd \varphi_2 \cdots \int_0^{\pi} \sin \varphi_{n-1} \dd \varphi_{n-1} \int_0^{2\pi} 1\dd \varphi_{n}\\
=&\left(\frac {1}{n} \sin^n r\right) \left( \sqrt{\pi}\frac{\Gamma(\frac{n-1}{2})}{\Gamma(\frac{n}{2})} \right) \cdots \left( \sqrt{\pi} \frac{\Gamma(\frac{3}{2})}{\Gamma(\frac{4}{2})} \right)\left( \sqrt{\pi} \frac{\Gamma(\frac{2}{2})}{\Gamma(\frac{3}{2})} \right) (2\pi)\\
=&\frac{\pi^{\frac{n}{2}}}{\Gamma(\frac{n}{2}+1)} \sin^n r.\\
\end{align*}

Hence the spherical ball with radius $r$ has centered volume of $\frac{\pi^{\frac{n}{2}}}{\Gamma(\frac{n}{2}+1)} \sin^n r$ and mass center of $(0,\ldots,0,1)$, which coincides to the original center of $B^n$.

Note that if $r=\pi$, then the ball $B^n$ has no mass center and if $\pi<r<2\pi$ and $n$ is positive odd integer, then the ball $B^n$ has minus mass center.

For a hyperbolic $n$-dimensional ball $B^n$, we can easily derive the centered volume by the below changing rule spherical to hyperbolic:
$$(\cos \varphi_1, \sin \varphi_1,\varphi_2,\ldots ,\varphi_n) \rightarrow
(\cosh \varphi_1, \sinh \varphi_1, \varphi_2,\ldots ,\varphi_n ).$$

We list the centered mass of unit density, i.e., $\rho=1$, of $k$-dimensional ball $B^k$ with a radius $r$ in Euclidean, spherical, and hyperbolic  spaces. We can see that the centered masses have simpler representations than total masses at the below $B^k(r)$ table.
$$
\def\arraystretch{1.3}
\begin{array}{c|c|c|c}
	\hline
	B^k(r) & \text{Euclidean} & \text{Spherical total mass} & \text{Hyperbolic total mass} \\ & \text{(centered) mass} & \text{and centered mass} & \text{and centered mass}\\
	\hline
k=1 & 2r & 2r, 2\sin r & 2r, 2\sinh r \\ 	\hline
k=2 & \pi r^2 & 4\pi \sin^2 \frac{r}{2}, \pi \sin^2 r & 4\pi \sinh^2 \frac{r}{2}, \pi \sinh^2 r\\
	\hline
k=3 & \frac{ 4\pi }{ 3}r^3 & 2\pi (r-\sin r\cos r),  & 2\pi (\sinh r\cosh r-r), \\
 &  &  \frac{ 4\pi }{ 3}\sin ^3 r &\frac{ 4\pi }{ 3}\sinh ^3 r \\
	\hline
	\vdots & \vdots & \vdots & \vdots  \\
	\hline
k=n & \frac{\pi^{\frac{n}{2}}}{\Gamma(\frac{n}{2}+1)}r^n &\frac{\pi^{\frac{n}{2}}}{\Gamma(\frac{n}{2}+1)}\int_0^r n\sin^{n-1} \varphi \dd \varphi , & \frac{\pi^{\frac{n}{2}}}{\Gamma(\frac{n}{2}+1)}\int_0^r n\sinh^{n-1} \varphi \dd \varphi ,\\
   & & \frac{\pi^{\frac{n}{2}}}{\Gamma(\frac{n}{2}+1)} \sin^n r & \frac{\pi^{\frac{n}{2}}}{\Gamma(\frac{n}{2}+1)} \sinh^n r \\
\end{array}$$

\subsubsection{Centered mass of a non-Euclidean $n$-dimensional sphere}
For an $(n-1)$-dimensional sphere $S^{n-1}$ with radius $r$, let us find the centered volume and mass center of the sphere $S^{n-1}$ on $\mathbb{X}^n$. Trivially, the mass center of $S^{n-1}$ is the same as the original center of $S^{n-1}$.  
Also we easily deduce that
$$m_{\text{tot}}(B^n(r))=\int_0^r m_{\text{tot}}(S^{n-1}(r)) \dd{r} \text{ and }m_{\text{cen}}(B^n(r))=\int_0^r m_{\text{cen}}(S^{n-1}(r)) \dd{r}.$$
Hence we obtain that
$$\frac{\dd}{\dd{r}}m_{\text{tot}}(B^n(r))=m_{\text{tot}}(S^{n-1}(r))
$$
and
$$
\frac{\dd}{\dd{r}}m_{\text{cen}}(B^n(r))=m_{\text{cen}}(S^{n-1}(r)).
$$

{\begin{equation*}
\def\arraystretch{1.3}
\begin{array}{c|c|c|c}
	\hline
	S^k(r) & \text{Euclidean} & \text{Spherical total mass} & \text{Hyperbolic total mass} \\ & \text{(centered) mass} & \text{and centered mass} & \text{and centered mass}\\
	\hline
k=0 & 2 & 2, 2\cos r & 2, 2\cosh r \\ 	\hline
k=1 & 2\pi r & 2\pi \sin r, 2\pi \sin r \cos r & 2\pi \sinh r, 2\pi \sinh r\cosh r\\
	\hline
k=2 & 4\pi r^2 & 4\pi \sin^2 r,  4\pi \sin ^2 r\cos r & 4\pi \sinh^2 r, 4\pi \sinh ^2 r\cosh r \\
	\hline
	\vdots & \vdots & \vdots & \vdots  \\
	\hline
k=n-1 & \frac{n\pi^{\frac{n}{2}}}{\Gamma(\frac{n}{2}+1)}r^{n-1} &\frac{n\pi^{\frac{n}{2}}}{\Gamma(\frac{n}{2}+1)}\sin^{n-1} r, & \frac{n\pi^{\frac{n}{2}}}{\Gamma(\frac{n}{2}+1)}\sinh^{n-1} r,\\
   & & \frac{n\pi^{\frac{n}{2}}}{\Gamma(\frac{n}{2}+1)} \sin^{n-1} r\cos r & \frac{n\pi^{\frac{n}{2}}}{\Gamma(\frac{n}{2}+1)} \sinh^{n-1} r\cosh r \\
\end{array}
\end{equation*}}

\subsubsection{Centered area of a non-Euclidean regular $n$-gonal disk}
For a spherical regular $n$-gonal $P_n^{\mathbb{S}}(a)$ with the same side edge length $a$, let us find the centered area and mass center of the regular $n$-gonal $P_n^{\mathbb{S}}(a)$.  The mass center is the same to the center of the $P_n^{\mathbb{S}}(a)$, trivially. First,
we consider a spherical right triangle with standard notation and $C=\frac{\pi}{2}$, then we get (see \cite{todhunter}, Art. 62)
\begin{equation}\label{right triangle2}
   \tan a =\tan A  \sin b, \text{ and } \tan b=\cos A \tan c.
\end{equation}

We can divide $P_n^{\mathbb{S}}(a)$ into $2n$ pieces of right triangles having a common origin of the $P_n^{\mathbb{S}}(a)$, we take one piece of right triangle $\triangle ABC$ with three sides $\frac{a}{2}, b, c$ and a right angle $C$, and a right sub-triangle $\triangle ADC$ with three sides $d, b, r$ (see Figure~\ref{regu2277}).

\begin{figure}[h!]
    \begin{center}
      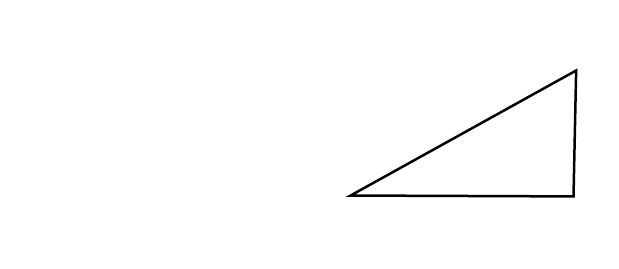
    
    \end{center}	
    \caption{a spherical or hyperbolic regular $n$-gon}\label{regu2277}
\end{figure}

We obtain the following formulas from (\ref{right triangle2}):
\begin{equation}\label{triangle2}
   \tan \frac{a}{2}=\tan \frac{\pi}{n}  \sin b, \text{ and } \tan b=\cos \varphi \tan r.
\end{equation}
Then we get \begin{equation}\label{triangle2s}\cot^2 b=\csc^2 b -1=\tan^2 \frac{\pi}{n} \cot^2 \frac{a}{2} -1,
\end{equation}
and
   \begin{align}\label{sin2r}
      \sin^2 r &=\frac{1}{1+\cot r^2}\nonumber\\
      &=\frac{1}{1+\cos^2 \varphi \cot^2 b}\nonumber\\
      &=\frac{1}{\sin^2 \varphi+\cos^2 \varphi \tan^2 \frac{\pi}{n} \cot^2 \frac{a}{2}},
   \end{align}
from the formula (\ref{triangle2}).
Let's adjust the three vertices $A, B, C$ of $\triangle ABC$ to the spherical coordinate system as $(\varphi_1, \varphi_2) = (0,0), (c, \frac{\pi}{n}), (b,0)$, respectively, with spherical coordinate defined as $(\sin \varphi_1 \sin \varphi_2, \sin \varphi_1 \cos \varphi_2, \cos \varphi_1).$ Then
the $P_n^{\mathbb{S}}(a)$ has mass center $(0,0,1)$, hence the $m_{\text{cen}}(P_n^{\mathbb{S}}(a))$ becomes the last coordinate integration \begin{equation}\label{intint}
    \int^{2\pi}_{0}\int^{r}_{0}~\sin \varphi_1\cos \varphi_1 \dd\varphi_1\dd\varphi_2.
\end{equation}
By the symmetry of the regular $n$-gonal and  \emph{axiom of partition}, the formula (\ref{intint}) is equal to
$$ 2n\int^{\frac{\pi}{n}}_{0}\int^{r}_{0}~\sin \varphi_1\cos \varphi_1 \dd\varphi_1\dd\varphi_2.$$
Finally, we can derive $m_{\text{cen}}(P_n^{\mathbb{S}}(a))=\frac{na}{2}\tan  \frac{a}{2} \cot \frac{\pi}{n}$  from the formula (\ref{sin2r}), and the below integration process:
\begin{align*}
n\int^{\frac{\pi}{n}}_{0}~\sin^2 r~ \dd\varphi_2 &=n\int^{\frac{\pi}{n}}_{0}~\frac{\dd \varphi_2}{\sin^2 \varphi_2+ \alpha^2 \cos^2 \varphi_2},\quad \alpha= \tan \frac{\pi}{n} \cot \frac{a}{2}\\
&=n\left[\frac{1}{\alpha}\tan^{-1}\left(\frac{1}{\alpha}\tan\varphi_2 \right)\right]^{\frac{\pi}{n}}_0\\
&=\frac{n}{\alpha} \frac{a}{2}\\
&= \frac{na}{2}\tan  \frac{a}{2} \cot \frac{\pi}{n}.
\end{align*}

For a hyperbolic regular $n$-gonal $P_n^{\mathbb{H}}(a)$ with the same side edge length $a$, let us find the centered area of that. We can see the hyperbolic counterpart of
(\ref{triangle2}), (\ref{triangle2s}), and (\ref{sin2r}):
\begin{align*}
   \tanh \frac{a}{2}=&\tan \frac{\pi}{n}  \sinh b, \text{ and } \tanh b=\cos \varphi \tanh r,\\
\coth^2 b=&\tan^2 \frac{\pi}{n} \coth^2 \frac{a}{2} +1,\\
      \sinh^2 r =&\frac{1}{\cos^2 \varphi \tan^2 \frac{\pi}{n} \coth^2 \frac{a}{2}-\sin^2 \varphi},
   \end{align*}
   and the corresponding integration process yields the expression $m_{\text{cen}}(P_n^{\mathbb{H}}(a)) = \frac{na}{2}\tanh \frac{a}{2} \cot \frac{\pi}{n}$:
\begin{align*}
n\int^{\frac{\pi}{n}}_{0}~\sinh^2 r~ \dd\varphi_2 &=n\int^{\frac{\pi}{n}}_{0}~\frac{\dd \varphi_2}{-\sin^2 \varphi_2+ \alpha^2 \cos^2 \varphi_2},\quad \alpha= \tan \frac{\pi}{n} \coth \frac{a}{2}\\
&=n\left[\frac{1}{2\alpha}\ln \left(\frac{\alpha+\tan \varphi_2}{\alpha-\tan \varphi_2}\right)\right]^{\frac{\pi}{n}}_0\\
&=\frac{n}{\alpha} \frac{a}{2}\\
&= \frac{na}{2}\tanh  \frac{a}{2} \cot \frac{\pi}{n}.
\end{align*}

\section{Pappus' Centroid Theorem and its application}

\subsection{Pappus solid and the statement of the theorem}
By a \emph{hyperplanar section}, we mean an $(n-1)$-dimensional manifold contained in a hyperplane.
\begin{definition}
A bounded $n$-dimensional manifold $\mathcal{P}$ in $\mathbb{X}^{n}$ with a mass distribution $\rho$ is called a \emph{Pappus solid along a centroid curve} $C$ or simply a \emph{Pappus solid}  if the solid $\mathcal{P}$ is laminated by the hyperplanar sections with the restricted mass distribution $\rho$ satisfying the conditions explained below.\\
- The mass centers of all concerned hyperplanar sections are mutually disjoint and they form the curve $C$.\\
- The arclength parametrization of the curve $C$ is $\mathcal{C}^1$-differentiable.\\
- The hyperplanar section deforms smoothly along the centroid curve and, hence, the hyperplane containing a hyperplanar section also does.
\end{definition}
The centroid curve of a Pappus solid is not necessarily contained in the solid. The hyperplane which contains a hyperplanar section is not necessarily perpendicular to the centroid curve. We also simply call a hyperplanar section just a \emph{section}. 
We describe a Pappus solid in detail:
\begin{itemize}
\item In the above definition, let the centroid curve $C$ is parametrized by the arclength as $\vc{c}(t),~(t\in I\subset\mathbb{R})$. The velocity vector of this curve is a unit tangent vector. We denote this vector as $\vc{t}(t)$ :
    $
    ~~\vc{t}(t)=\vc{c}'(t), ~ \langle\vc{t},\vc{t}\rangle=1
    $
\item The section mass-centered at $\vc{c}(t)$  is denoted as $L_t$.
\item Let $\mathbb{X}^{n-1}_{\vc{p}(t),\vc{c}(t)}$ be the hyperplane containing the section $L_t$.
\item The polar vector $\vc{p}(t)$ of this hyperplane is selected so that, for all $t\in I$,
    \[
    \langle\vc{t}(t),\vc{p}(t)\rangle\geq0.
    \]
\item Let $\theta_t$ be the slant angle of this hyperplane with respect to the centroid curve. By Equation (\ref{f10}), we see
    \[
    \cos\theta_t=\langle\vc{t}(t),\vc{p}(t)\rangle.
    \]
\item The Pappus solid $\mathcal{P}$ described above is denoted as
    \[
    \mathcal{P}=\mathcal{P}\big[\vc{c}(t),L_t,\theta_t\big].
    \]
\end{itemize}

\begin{theorem}\label{pappus}
  Let $\mathcal{P}=\mathcal{P}\big[\vc{c}(t),L_t,\theta_t\big]$ be a Pappus solid with a mass distribution $\rho$. Then the total mass of this solid is the total mass of the centroid curve where the mass distribution $\tilde{\rho}(t)$ at the point $\vc{c}(t)$ is given by $\tilde{\rho}(t)=m_{\text{cen}}(L_t,\rho)\cos\theta_t$ \emph{:}
    \begin{equation}\label{pappus' centroid theorem}
    m_{\text{tot}}\Big(\mathcal{P}\big[\vc{c}(t),L_t,\theta_t)\big],\rho\Big)
    =m_{\text{tot}}\Big(\vc{c}(t),
    \tilde{\rho}=m_{\text{cen}}(L_t,\rho)\cos\theta_t)\Big).
  \end{equation}
\end{theorem}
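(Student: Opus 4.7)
The plan is to parametrize $\mathcal{P}$ by the moving orthonormal frame along $C$, express the volume element in that parametrization, and observe that the centered mass automatically emerges from the orthogonality that defines it. All the nontrivial work is concentrated in the shape of the Jacobian factor.

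At each $t\in I$, attach the frame $F(t)=\bigl(\vc{p}(t),\vc{n}_1(t),\dots,\vc{n}_{n-1}(t),\vc{c}(t)\bigr)$ as in formula (\ref{eqn:X^{n-1}(p,c)}), and write a point of the solid as
\begin{equation*}
\vc{r}(t,\vec\varphi)=F(t)\begin{bmatrix}0\\ \vc{s}(\vec\varphi)\end{bmatrix},\qquad \vc{s}(\vec\varphi)=(s_1,\dots,s_n)^T\in\mathbb{X}^{n-1}\subset\mathbb{R}^n.
\end{equation*}
The first key observation is that $\vc{p}(t)$ is a unit normal to $\mathbb{X}^{n-1}_{\vc{p}(t),\vc{c}(t)}$ not only at $\vc{c}(t)$ but at every point $\vc{r}$ of that hyperplane: indeed $\vc{r}=s_1\vc{n}_1+\cdots+s_{n-1}\vc{n}_{n-1}+s_n\vc{c}$, and each of these vectors is $J_{n+1}$-orthogonal to $\vc{p}$, so $\langle\vc{p},\vc{r}\rangle=0$ (i.e.\ $\vc{p}\in T_{\vc{r}}\mathbb{X}^n$) and $\vc{p}$ is orthogonal to $T_{\vc{r}}L_t$. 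A standard Gram-determinant factorization then yields
\begin{equation*}
\dd V_n \;=\; |\langle\vc{r}_t,\vc{p}(t)\rangle|\,\dd V_{n-1}^{L_t}\,\dd t.
\end{equation*}

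Next I would compute $\langle\vc{r}_t,\vc{p}\rangle$. Put $B(t):=F(t)^T J_{n+1}F'(t)$. Differentiating $F^TJ_{n+1}F=J_{n+1}$ gives $B+B^T=0$, so $B$ is antisymmetric, and its $(1,n+1)$-entry is $B_{1,n+1}=\vc{p}^T J_{n+1}\vc{c}'=\langle\vc{t}(t),\vc{p}(t)\rangle=\cos\theta_t$. Since $\vc{r}_t=F'(t)(0,\vc{s})^T$ and $\vc{p}^T = e_1^T F^T$,
\begin{equation*}
\langle\vc{r}_t,\vc{p}\rangle \;=\; e_1^T B\,(0,\vc{s})^T \;=\; \cos\theta_t\cdot s_n \;+\;\sum_{i=1}^{n-1}\gamma_i(t)\,s_i,
\end{equation*}
where the $\gamma_i(t):=B_{1,i+1}$ depend only on $t$. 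Meanwhile, in the frame $F(t)$ the $\vc{c}(t)$-coefficient of $\vc{r}$ is $s_n$ and the $\vc{n}_i(t)$-coefficient is $s_i$. Because $L_t$ is mass-centered at $\vc{c}(t)$, Theorem~\ref{manifold material vector} gives $\int_{L_t}\rho\vc{r}\,\dd V_{n-1}^{L_t}=m_{\text{cen}}(L_t,\rho)\,\vc{c}(t)$, whose frame decomposition forces $\int_{L_t}\rho\, s_i\,\dd V_{n-1}^{L_t}=0$ for $i<n$ and $\int_{L_t}\rho\, s_n\,\dd V_{n-1}^{L_t}=m_{\text{cen}}(L_t,\rho)$. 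The $\gamma_i$-terms therefore integrate away, and Fubini yields
\begin{equation*}
m_{\text{tot}}(\mathcal{P},\rho)\;=\;\int_I \cos\theta_t\, m_{\text{cen}}(L_t,\rho)\,\dd t\;=\; m_{\text{tot}}\bigl(\vc{c}(t),\,\tilde\rho=m_{\text{cen}}(L_t,\rho)\cos\theta_t\bigr),
\end{equation*}
which is (\ref{pappus' centroid theorem}).

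The main obstacle is disposing of the absolute value in $|\langle\vc{r}_t,\vc{p}\rangle|$, since the orthogonality cancellation requires the \emph{signed} integrand. The lamination hypothesis forces $\langle\vc{r}_t,\vc{p}\rangle$ to have a constant sign on each fiber $L_t$, for otherwise $\vc{r}_t$ would be instantaneously tangent to $L_t$ at some point and the parametrization $(t,\vec\varphi)\mapsto\vc{r}$ would degenerate there. Evaluating at the centroid $\vc{r}=\vc{c}(t)$, where $s_n=1$ and $s_i=0$, pins the sign to $\cos\theta_t\ge 0$; so on the interior of the lamination the absolute value may be dropped, and the orthogonality step cleanly finishes the proof.
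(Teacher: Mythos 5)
Your argument is, in substance, the paper's own proof of the non-Euclidean case: the same moving frame $F(t)=(\vc{p},\vc{n}_1,\dots,\vc{n}_{n-1},\vc{c})$, the same skew-symmetry of $B=F^TJ_{n+1}F'$ with $(1,n{+}1)$-entry $\langle\vc{t},\vc{p}\rangle=\cos\theta_t$, and the same fiber-integral orthogonality (the paper's Lemma \ref{lem4m}). Your one genuine streamlining is the Gram-determinant factorization $\dd V_n=|\langle\vc{r}_t,\vc{p}\rangle|\,\dd V_{n-1}\,\dd t$, obtained by projecting $\vc{r}_t$ onto the unit normal $\vc{p}$ of the fiber inside $T_{\vc{r}}\mathbb{X}^n$; the paper reaches the same expression by explicit column operations on $D\vc{r}$ plus a separate lemma showing $J_n\lambda_t\Phi$ is tangential. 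So the route is cleaner but not different.

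Two points need repair. First, everything you write --- the orthonormality of a frame whose last column is $\vc{c}$, and the identities $\langle\vc{p},\vc{r}\rangle=0$ and $\langle\vc{r}_t,\vc{r}\rangle=0$ --- rests on $\langle\vc{r},\vc{r}\rangle=\pm1$ and therefore covers only $\mathbb{S}^n$ and $\mathbb{H}^n$. For $\mathbb{E}^n$ the hyperplane is affine, $\vc{r}=\vc{c}+\sum x_i\vc{n}_i$, and the frame is completed by $e_{n+1}$ rather than $\vc{c}$; the paper runs this parallel computation separately in Section \ref{Pappus Euclid thm}, and your proof needs the analogous case (the Gram factorization $\dd V_n=|\vc{r}_t\cdot\vc{n}|\,\dd V_{n-1}\,\dd t$ still holds there, but not by the argument you give). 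Second, your sign-fixing step is not valid as stated: you evaluate the linear function $\langle\vc{r}_t,\vc{p}\rangle=\sum_i\gamma_i s_i+\cos\theta_t s_n$ at $\vc{s}=(0,\dots,0,1)$, i.e.\ at $\vc{c}(t)$, but the paper explicitly allows the centroid curve not to lie in the solid, so the constant sign of this function on the fiber $L_t$ is not determined by its value at a point outside $L_t$ (a linear function can vanish and change sign between $\vc{c}(t)$ and $L_t$, and when $\cos\theta_t=0$ the value at the centroid pins nothing). The repair is immediate from your own orthogonality identities: $\int_{L_t}\rho\,\langle\vc{r}_t,\vc{p}\rangle\,\dd V_{n-1}=\cos\theta_t\,m_{\text{cen}}(L_t,\rho)\ge0$, so the nonvanishing integrand cannot be everywhere negative on a fiber carrying positive mass; equivalently, the globally constant sign $\epsilon_0$ must be $+1$ because the total mass is nonnegative. (The paper's own disposal of its $\epsilon_0$ is equally terse, but the positivity argument should be stated.)
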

Section \ref{section:Pappus' Centroid Theorem on Volume} and \ref{Pappus Euclid thm} consist of the proof of this theorem.

\subsection{Non-Euclidean Pappus' Centroid Theorem on Volume}\label{section:Pappus' Centroid Theorem on Volume}
In this section, we will present the necessary tools and proof for the non-Euclidean version of Pappus' centroid theorem.

\subsubsection{Moving basis and parametrization of Pappus Solid}
The Pappus solid $\mathcal{P}=\mathcal{P}\big[\vc{c}(t),L_t,\theta_t\big]$ immediately determines the velocity vector $\vc{t}(t)$ and the polar vector $\vc{p}(t)$. Following the discussion in Section, we can construct an orthonormal basis $\{\vc{n}_1(t),\dots,\vc{n}_{n-1}(t)\}$ for the space $\text{Span}\{\vc{c}(t),\vc{p}(t)\}^{\perp}$ $\subset\mathbb{R}^{n+1}$. Then the set of vectors
\[
\{\vc{p}(t),\vc{n}_1(t),\dots,\vc{n}_{n-1}(t),\vc{c}(t)\}
\]
is orthonormal with respect to the $2$-form $\langle~,~\rangle$ for all $t\in I$. This is an \emph{moving basis} associated to the point $\vc{c}(t)$. This moving basis is not a moving frame because the vector $\vc{c}$ is not a tangent vector and the vector $\vc{p}$ is not necessarily a tangent vector if the space is Euclidean. Others are tangential. This moving basis is assumably $\mathcal{C}^1$, even though we do not provide a verification of its existence.  For each $t\in I$, define the square matrix $F_t$ as
\begin{equation}\label{eqn:moving frame matrix F_t}
  F_t=[\vc{p}(t),\vc{n}_1(t),\cdots,\vc{n}_{n-1}(t),\vc{c}(t)].
\end{equation}

Consider Equation (\ref{eqn:X^{n-1}(p,c)}). It is clear that a point $\vc{r}$ in $\mathbb{X}^{n-1}_{\vc{p}(t),\vc{c}(t)}$ can be parametrized as
\begin{equation}\label{rft}
  \vc{r}=\vc{r}(t,\varphi_1,\dots,\varphi_{n-1})=F_t\begin{bmatrix}
                                             0 \\ \Phi^{n-1}(\varphi_1,\dots,\varphi_{n-1})
                                          \end{bmatrix}.
\end{equation}
The section $L_t$ is contained in $\mathbb{X}^{n-1}_{\vc{p}(t),\vc{c}(t)}$. The domain of definition for the Pappus solid $\mathcal{P}$ is described like this : For each $t\in I$, the $(n-1)$-tuple $(\varphi_1,\dots,\varphi_{n-1})$ varies in some region $D_t\subset\mathbb{R}^{n-1}$ depending on $t$ :
\begin{align*}\label{}
  &\mathcal{P}\big[\vc{c}(t),L_t,\theta_t\big]
  \\
  =&
  \left\{\left.\vc{r}=F_t\begin{bmatrix}
                     0 \\ \Phi^{n-1}(\varphi_1,\dots,\varphi_{n-1})
                     \end{bmatrix}
  \,\right|\,t\in I,~ (\varphi_1,\dots,\varphi_{n-1})\in D_t\subset\mathbb{R}^{n-1}
  \right\}.
\end{align*}
This is a parametrization of a Pappus solid. Observe that this parametrization is regular.

For notational simplicity, we denote $\Phi^{n-1}$ simply as $\Phi$ and denote the Jacobian matrix
$\left(\Phi_{\varphi_1}~\Phi_{\varphi_2}~\cdots~ \Phi_{\varphi_{n-1}}\right)$ of $\Phi$ as $D\Phi$ :
\[
\Phi(\varphi_1,\dots,\varphi_{n-1})=(f_1,\dots,f_{n})^T,
\quad D\Phi=\left(\Phi_{\varphi_1}~\Phi_{\varphi_2}~\cdots~ \Phi_{\varphi_{n-1}}\right).
\]

\begin{lemma}\label{lem4m}
Let $\mathcal{P}=\mathcal{P}\big[\vc{c}(t),L_t,\theta_t\big]$ be a Pappus solid with a mass distribution $\rho$. A parametrization is given by Equation (\ref{rft}) with the parameter domain $D_t$. For each leaf $L_t~(t\in I)$, we have
\begin{equation}\label{f57}
\int_{D_t}\rho f_i\sqrt{\left|\det\left\{(D\Phi)^TJ_nD\Phi\right\}\right|\,}\,\, \dd{\varphi_1}\cdots\dd{\varphi_{n-1}}
=\begin{cases}
   0, & \emph{\mbox{if }}~ i<n \\
   m_{\text{cen}}(L_t,\rho), & \emph{\mbox{if }}~i=n.
 \end{cases}
\end{equation}
\end{lemma}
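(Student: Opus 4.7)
The plan is to apply Theorem~\ref{manifold material vector} to the section $L_t$, which is a bounded $(n-1)$-submanifold of $\mathbb{X}^n$ carrying the restricted mass distribution $\rho$. The theorem gives
\begin{equation*}
\mathcal{F}(L_t) \;=\; \int_{L_t} \rho\, \vc{r}\, \dd V_{n-1},
\end{equation*}
and the very definition of a Pappus solid along $C$ forces the mass center of $L_t$ to be $\vc{c}(t)$, so $\mathcal{F}(L_t) = m_{\text{cen}}(L_t,\rho)\,\vc{c}(t)$. The strategy is to substitute the $\varphi$-parametrization of both $\vc{r}$ and $\dd V_{n-1}$ into the left-hand integral and then read off the two identities of (\ref{f57}) by comparing coefficients against the basis $\{\vc{n}_1,\dots,\vc{n}_{n-1},\vc{c}\}$.

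The expansion of the integrand is immediate from (\ref{rft}): since the first column $\vc{p}$ of $F_t$ is hit by the $0$ entry,
\begin{equation*}
\vc{r} \;=\; F_t\begin{bmatrix} 0 \\ \Phi \end{bmatrix} \;=\; \sum_{i=1}^{n-1} f_i\,\vc{n}_i(t) \;+\; f_n\,\vc{c}(t).
\end{equation*}
The key computational step is to rewrite the intrinsic volume element. From $D\vc{r} = F_t\begin{bmatrix}0 \\ D\Phi\end{bmatrix}$ and the orthogonality $F_t^{T} J_{n+1} F_t = J_{n+1}$ of the moving basis, one computes
\begin{equation*}
(D\vc{r})^{T} J_{n+1}\, D\vc{r}
\;=\; \begin{bmatrix}\mathbf{0} & (D\Phi)^{T}\end{bmatrix} J_{n+1} \begin{bmatrix}0 \\ D\Phi\end{bmatrix}
\;=\; (D\Phi)^{T} J_n\, D\Phi,
\end{equation*}
where the last equality uses the block structure: the zero row in $\begin{bmatrix}0 \\ D\Phi\end{bmatrix}$ kills the first diagonal entry of $J_{n+1}$, and the remaining $n\times n$ corner of $J_{n+1}$ is exactly $J_n$. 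This identifies $\dd V_{n-1} = \sqrt{|\det\{(D\Phi)^{T}J_n D\Phi\}|}\,\dd\varphi_1\cdots\dd\varphi_{n-1}$.

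Assembling these pieces turns the vector identity $\int_{L_t}\rho\,\vc{r}\,\dd V_{n-1} = m_{\text{cen}}(L_t,\rho)\,\vc{c}(t)$ into an equation of the form $\sum_{i=1}^{n-1} I_i\,\vc{n}_i(t) + I_n\,\vc{c}(t) = m_{\text{cen}}(L_t,\rho)\,\vc{c}(t)$, where $I_i$ is precisely the integral appearing in (\ref{f57}). Linear independence of $\{\vc{n}_1,\dots,\vc{n}_{n-1},\vc{c}\}$ immediately forces $I_i=0$ for $i<n$ and $I_n = m_{\text{cen}}(L_t,\rho)$. The only mildly delicate step is the block-matrix reduction of $(D\vc{r})^{T}J_{n+1} D\vc{r}$ down to $(D\Phi)^{T}J_n D\Phi$, but it is a one-line calculation once the orthogonality of $F_t$ is exploited; everything else is a direct application of Theorem~\ref{manifold material vector} together with the orthonormality of the moving basis.
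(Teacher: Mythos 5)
Your proposal is correct and follows essentially the same route as the paper: apply Theorem~\ref{manifold material vector} to the section $L_t$, use that its mass center is $\vc{c}(t)$ by the definition of a Pappus solid, expand $\vc{r}=\sum_{i<n} f_i\vc{n}_i+f_n\vc{c}$ via the parametrization, and compare coefficients using the linear independence of $\{\vc{n}_1,\dots,\vc{n}_{n-1},\vc{c}\}$. The only difference is that you explicitly carry out the block-matrix reduction $(D\vc{r})^TJ_{n+1}D\vc{r}=(D\Phi)^TJ_nD\Phi$ justifying the form of $\dd V_{n-1}$, which the paper simply asserts; that computation is correct.
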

\begin{proof}
The volume element $\dd{V}_{n-1}$ of the section $L_t$ is given by
\[
\dd{V}_{n-1} =\sqrt{\left|\det\left\{(D\Phi)^TJ_nD\Phi\right\}\right|\,}\,\, \dd{\varphi_1}\cdots\dd{\varphi_{n-1}}.
\]
The mass center of the section $L_t$ is $\vc{c}(t)$. Hence the mass center vector is
\begin{align*}
&~~m_{\text{cen}}(L_t,\rho)\vc{c}(t)
\\
&=\int_{L_t}\rho\,\vc{r}\,\dd{V}_{n-1}
\\
&=\int_{D_t}\rho (f_1\vc{n}_1+\dots+f_{n-1}\vc{n}_{n-1}+f_{n}\vc{c})\,
\sqrt{\left|\det\left\{(D\Phi)^TJ_nD\Phi\right\}\right|\,}\,\, \dd{\varphi_1}\cdots\dd{\varphi_{n-1}}
\\
&=\int_{D_t}\rho \left(\sum_{i=1}^{n-1}f_i\vc{n}_i+f_{n}\vc{c}\right)\,
\sqrt{\left|\det\left\{(D\Phi)^TJ_nD\Phi\right\}\right|\,}\,\, \dd{\varphi_1}\cdots\dd{\varphi_{n-1}}
\\
&=\sum_{i=1}^{n-1}\left(\int_{D_t}\rho f_i
\sqrt{\left|\det\left\{(D\Phi)^TJ_nD\Phi\right\}\right|\,}\,\, \dd{\varphi_1}\cdots\dd{\varphi_{n-1}}\right)\vc{n}_i(t)
\\
&\quad+\left(\int_{D_t}\rho f_{n}
\sqrt{\left|\det\left\{(D\Phi)^TJ_nD\Phi\right\}\right|\,}\,\, \dd{\varphi_1}\cdots\dd{\varphi_{n-1}}\right)\vc{c}(t).
\end{align*}
The set $\{\vc{n}_1(t),\dots,\vc{n}_{n-1}(t),\vc{c}(t)\}$ is independent. This completes the proof.
\end{proof}

\subsubsection{Derivative of the basis matrix}
In the next lemma, we investigate the relation between the basis matrix $F_t$ given in (\ref{eqn:moving frame matrix F_t}) and its derivative ${F_t}'=\displaystyle\frac{\dd{F_t}}{\dd{t}}$.

\begin{lemma}
The basis matrix $F_t$ given in \emph{(\ref{eqn:moving frame matrix F_t})} satisfies the system of differential equations
\begin{equation}\label{eqn:ODEs for frame}
{F_t}'=F_t J_{n+1} \Lambda_t
\end{equation}
where $\Lambda_t$ is of the form
\[
\Lambda_t=
\left(
\begin{array}{c|c}
  0& \begin{array}{c c c c}
     \alpha_1(t) & \cdots & \alpha_{n-1}(t) & {\cos\theta_t}
     \end{array} \\
  \hline
  \begin{array}{c}
  -\alpha_1(t) \\ \vdots \\ -\alpha_{n-1}(t) \\ {-\cos\theta_t}
  \end{array} & \lambda_t
\end{array}
\right)
\]
for some skew-symmetric matrix $\lambda_t$. Every entry functions of $\Lambda_t$ is a continuous function depending only on $t$.
\end{lemma}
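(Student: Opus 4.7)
My plan is to derive the relation $F_t' = F_t J_{n+1} \Lambda_t$ from the orthonormality of the moving basis encoded in $F_t$, and then read off the distinguished $(1, n+1)$-entry of $\Lambda_t$ directly from the definition of the slant angle. The orthonormality relation is exactly $F_t^T J_{n+1} F_t = J_{n+1}$, and because $J_{n+1}^2 = I_{n+1}$ this is equivalent to $F_t^{-1} = J_{n+1} F_t^T J_{n+1}$.

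First I would differentiate $F_t^T J_{n+1} F_t = J_{n+1}$ in $t$ to obtain
\[
(F_t')^T J_{n+1} F_t + F_t^T J_{n+1} F_t' = 0.
\]
Defining $\Lambda_t := F_t^T J_{n+1} F_t'$, this identity is precisely $\Lambda_t + \Lambda_t^T = 0$, so $\Lambda_t$ is skew-symmetric. Left-multiplying the definition by $F_t J_{n+1}$ and using $F_t J_{n+1} F_t^T J_{n+1} = I_{n+1}$ (which follows from the orthonormality and $J_{n+1}^2 = I_{n+1}$) yields $F_t J_{n+1} \Lambda_t = F_t'$, which is the ODE shape claimed.

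Next I would identify the block structure. Writing $\vc{v}_1, \dots, \vc{v}_{n+1}$ for the columns of $F_t$ (so $\vc{v}_1 = \vc{p}(t)$ and $\vc{v}_{n+1} = \vc{c}(t)$), the $(i,j)$-entry of $\Lambda_t$ equals $\vc{v}_i^T J_{n+1} \vc{v}_j' = \langle \vc{v}_j', \vc{v}_i\rangle$. Skew-symmetry forces every diagonal entry to vanish and makes the lower-right $n \times n$ block $\lambda_t$ itself skew-symmetric as a principal submatrix. The crucial $(1, n+1)$-entry equals $\langle \vc{c}'(t), \vc{p}(t)\rangle = \langle \vc{t}(t), \vc{p}(t)\rangle$, which by the sign convention $\langle \vc{t}(t), \vc{p}(t)\rangle \geq 0$ (stipulated in the setup of the Pappus solid) and by formula (\ref{f10}) equals exactly $\cos\theta_t$; by skew-symmetry the $(n+1, 1)$-entry is $-\cos\theta_t$. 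Labelling the remaining first-row entries $\alpha_1(t), \dots, \alpha_{n-1}(t)$, with their negatives forced in the first column, completes the prescribed display of $\Lambda_t$. Continuity in $t$ of every entry is immediate, since $F_t$ is assumed $\mathcal{C}^1$ and each entry is a product of continuous functions of its columns and their derivatives.

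I do not anticipate any substantive obstacle; the single delicate point is the sign in the $(1, n+1)$-entry, and this is exactly why the sign convention on $\langle \vc{t}, \vc{p}\rangle$ was imposed at the outset, since (\ref{f10}) on its own only produces $|\langle \vc{t}, \vc{p}\rangle| = \cos\theta_t$.
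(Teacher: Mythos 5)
Your argument is correct and follows essentially the same route as the paper: define $\Lambda_t := F_t^T J_{n+1} F_t'$, deduce skew-symmetry by differentiating the orthogonality relation $F_t^T J_{n+1} F_t = J_{n+1}$, invert $F_t^T J_{n+1}$ via $J_{n+1}^2 = I_{n+1}$ to get the ODE, and pin down the $(1,n+1)$-entry as $\langle \vc{t},\vc{p}\rangle = \cos\theta_t$ using the sign convention from the setup of the Pappus solid. The only cosmetic difference is that you read the entry directly from the formula $(\Lambda_t)_{ij}=\vc{v}_i^T J_{n+1}\vc{v}_j'$, whereas the paper expands the last column of $F_t'=F_tJ_{n+1}\Lambda_t$ in the moving basis and pairs with $\vc{p}$; these are the same computation.
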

\begin{proof}
The matrix $F_t$ is orthogonal:
\[
\left(F_t\right)^TJ_{n+1}F_t=J_{n+1}.
\]
Differentiating both sides of the above equation, we get
\[
O=\left({F_t}^{T}\right)'J_{n+1}F_t+{F_t}^{T}J_{n+1}{F_t}'
=\left({F_t}^{T}J_{n+1}{F_t}'\right)^{T}+{F_t}^{T}J_{n+1}{F_t}'.
\]
This means that the matrix $\Lambda_t:={F_t}^{T}J_{n+1}{F_t}'$ is skew-symmetric.\\
Since ${F_t}^{T}J_{n+1}F_tJ_{n+1}=J_{n+1}J_{n+1}=I_{n+1}$, we have $\left({F_t}^{T}J_{n+1}\right)^{-1}=F_tJ_{n+1}$  and
\begin{equation*}\label{}
  {F_t}'=F_tJ_{n+1}\Lambda_t.
\end{equation*}
Let's look into this skew-symmetric matrix $\Lambda_t$ in more detail. Since
\begin{equation*}
{F_t}'=[\vc{p}',{\vc{n}_1}',\cdots,{\vc{n}_{n-1}}',{\vc{c}'}]
=[\vc{p}',{\vc{n}_1}',\cdots,{\vc{n}_{n-1}}',{\vc{t}}]
\end{equation*}
and
\begin{equation*}
{F_t}'=F_tJ_{n+1}\Lambda_t
=[\vc{p},{\vc{n}_1},\cdots,{\vc{n}_{n-1}},\delta_{X}\vc{c}]\Lambda_t,
\end{equation*}
we have that
\begin{equation}\label{eqn:F_t'}
  [\vc{p}',{\vc{n}_1}',\cdots,{\vc{n}_{n-1}}',\vc{t}]=
  [\vc{p},{\vc{n}_1},\cdots,{\vc{n}_{n-1}},\delta_{X}\vc{c}]\Lambda_t.
\end{equation}
Let the last column of the matrix $\Lambda_t$ be the vector $[d_0,d_1,\dots,d_{n-1},d_{n}]^T$.\\
From Equation (\ref{eqn:F_t'}), we can see that
\[
\vc{t}(t) =d_0\vc{p}(t)+d_1\vc{n}_1(t)+\dots+d_{n-1}\vc{n}_{n-1}(t)+\delta_{X}d_n\vc{c}(t).
\]
Apply $\langle~,\vc{p}\rangle$ on both sides of the above equation. By the orthonormality of the basis $\{\vc{p}(t),\vc{n}_1(t),\dots,\vc{n}_{n-1}(t),\vc{c}(t)\}$ , we can see that
\begin{equation*}
  d_0 = \langle\vc{t}(t),\vc{p}(t)\rangle=\cos\theta_t.
\end{equation*}
and , hence, the skew-symmetric matrix $\Lambda_t$ is of the form
\[
\Lambda_t=
\left(
\begin{array}{c|c}
  0& \begin{array}{c c c c}
     \alpha_1(t) & \cdots & \alpha_{n-1}(t) & \cos\theta_t
     \end{array} \\
  \hline
  \begin{array}{c}
  -\alpha_1(t) \\ \vdots \\ -\alpha_{n-1}(t) \\ -\cos\theta_t
  \end{array} & \lambda_t
\end{array}
\right)
\]
for some skew-symmetric matrix $\lambda_t$. This completes proof.
\end{proof}

\subsubsection{Proof of the Pappus theorem}
Let $D\vc{r}$ is the Jacobian matrix of $\vc{r}$.
Then we can see that
\begin{align*}
D\vc{r}&=
\begin{pmatrix}
\vc{r}_{t}&\vc{r}_{\varphi_1}&\cdots&\vc{r}_{\varphi_{n-1}}
\end{pmatrix}
\\
&=
\left(
\begin{array}{c c c c}
  {F_t}'\begin{bmatrix}
                0 \\
                \Phi
              \end{bmatrix} & F_t\begin{bmatrix}
                                0 \\
                                \Phi_{\varphi_1}
                              \end{bmatrix} & \cdots & F_t\begin{bmatrix}
                                                         0 \\
                                                         \Phi_{\varphi_{n-1}}
                                                       \end{bmatrix}
\end{array}
\right)
\\
&=
\left(
\begin{array}{c c c c}
  F_t J_{n+1}\Lambda_t\begin{bmatrix}
                0 \\
                \Phi
              \end{bmatrix} & F_t\begin{bmatrix}
                                0 \\
                                \Phi_{\varphi_1}
                              \end{bmatrix} & \cdots & F_t\begin{bmatrix}
                                                         0 \\
                                                         \Phi_{\varphi_{n-1}}
                                                       \end{bmatrix}
\end{array}
\right)\quad (\text{\small by Equation (\ref{eqn:ODEs for frame})})
\\
&=F_t\left(
\begin{array}{c c c c}
  J_{n+1}\Lambda_t\begin{bmatrix}
                0 \\
                \Phi
              \end{bmatrix} & \begin{bmatrix}
                                0 \\
                                \Phi_{\varphi_1}
                              \end{bmatrix} & \cdots & \begin{bmatrix}
                                                         0 \\
                                                         \Phi_{\varphi_{n-1}}
                                                       \end{bmatrix}
\end{array}
\right)
\\
&=
F_t\left(
\begin{array}{c|c c c}
  \alpha_1f_1+\dots+\alpha_{n-1}f_{n-1}+\cos\theta_t f_{n} & 0 & \cdots & 0 \\
  \hline
  J_{n}\lambda_t\Phi & \Phi_{\varphi_1} & \cdots & \Phi_{\varphi_{n-1}}
\end{array}
\right).
\end{align*}

\begin{lemma}
\[
J_{n}\lambda_t\Phi\in T_{\Phi}\mathbb{X}^n=\text{\emph{Span}}\{\Phi_{\varphi_1},\dots,\Phi_{\varphi_{n-1}}\},\quad
J_{n}\lambda_t\Phi=a_1\Phi_{\varphi_1}+\dots+a_{n-1}\Phi_{\varphi_{n-1}}.
\]
for some real numbers $a_1,\dots,a_{n-1}$ depending on $t$.
\end{lemma}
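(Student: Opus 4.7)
The plan is to show that $J_n\lambda_t\Phi$ is $\langle\,,\,\rangle$-orthogonal to $\Phi$, which, by the characterization of tangent spaces given in Section \ref{section:models of spaces} (for $\mathbb{X}^{n-1}=\mathbb{S}^{n-1}$ or $\mathbb{H}^{n-1}$, a vector $v\in\mathbb{R}^n$ is tangent to $\mathbb{X}^{n-1}$ at $\Phi$ if and only if $\langle v,\Phi\rangle=0$), places $J_n\lambda_t\Phi$ inside the $(n-1)$-dimensional tangent space $T_\Phi\mathbb{X}^{n-1}$. Since the parametrization is regular, the partials $\Phi_{\varphi_1},\dots,\Phi_{\varphi_{n-1}}$ form a basis of that tangent space, so the linear combination with coefficients $a_i(t)$ follows at once.

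The two ingredients I would isolate first are both elementary. Ingredient one: the matrix $\lambda_t$ is itself skew-symmetric, because it sits as the lower-right $n\times n$ principal block of the skew-symmetric matrix $\Lambda_t$ from the previous lemma, and any principal diagonal block of a skew-symmetric matrix is again skew-symmetric. Ingredient two: $J_n^2=I_n$, since $J_n$ is diagonal with entries $\pm 1$.

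With these in hand the computation is a one-liner. I would write
\[
\langle J_n\lambda_t\Phi,\;\Phi\rangle
=\Phi^T J_n\bigl(J_n\lambda_t\Phi\bigr)
=\Phi^T(J_n{}^2)\,\lambda_t\Phi
=\Phi^T\lambda_t\Phi = 0,
\]
where the last equality holds because the scalar $\Phi^T\lambda_t\Phi$ equals its own transpose, namely $\Phi^T\lambda_t^T\Phi=-\Phi^T\lambda_t\Phi$. This confirms $J_n\lambda_t\Phi\in T_\Phi\mathbb{X}^{n-1}$, and hence $J_n\lambda_t\Phi=a_1\Phi_{\varphi_1}+\dots+a_{n-1}\Phi_{\varphi_{n-1}}$ for some $a_i=a_i(t)$.

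I do not anticipate a genuine obstacle; the entire content of the lemma is the combination of the skew-symmetry of $\lambda_t$ with the involution $J_n^2=I_n$. The only subtle point worth a sentence in the write-up is a mild notational one: the symbol $T_\Phi\mathbb{X}^n$ appearing in the display should be understood in context as the tangent space at $\Phi$ of the codimension-one slice parametrized by $\Phi=\Phi^{n-1}$, which is why it is spanned by exactly $n-1$ partials.
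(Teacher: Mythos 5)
Your proof is correct and follows essentially the same route as the paper's: both establish $\langle J_n\lambda_t\Phi,\Phi\rangle=\Phi^T J_n J_n\lambda_t\Phi=\Phi^T\lambda_t\Phi=0$ via the skew-symmetry of $\lambda_t$, and then conclude by noting that regularity of $\Phi$ makes $\{\Phi_{\varphi_1},\dots,\Phi_{\varphi_{n-1}}\}$ a basis of the orthogonal complement of $\Phi$. Your added remarks (that $\lambda_t$ inherits skew-symmetry as a principal block of $\Lambda_t$, and the clarification that the relevant tangent space is that of the $(n-1)$-dimensional slice) are accurate but the argument is the one in the paper.
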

\begin{proof}
  Since $\langle\Phi,\Phi\rangle=1$ or $-1$, we have
  \[
  \langle\Phi,\Phi_{\varphi_i}\rangle=0~\text{ for all }~i=1,2,\dots,n-1.
  \]
  Since the parametrization $\Phi(\varphi_1,\dots,\varphi_{n-1})$ is regular, the set $\{\Phi_{\varphi_1},\dots,\Phi_{\varphi_{n-1}}\}$ is a basis for the orthogonal complement $\Phi(\varphi_1,\dots,\varphi_{n-1})^{\perp}$ for all $(\varphi_1,\dots,\varphi_{n-1})$:
  \begin{equation}\label{eqn:Phi^{perp}}
  \Phi(\varphi_1,\dots,\varphi_{n-1})^{\perp}
  =\text{Span}\{\Phi_{\varphi_1},\dots,\Phi_{\varphi_{n-1}}\}.
  \end{equation}
  We claim that
  \begin{equation}\label{eqn:<Phi,J_{n}lambda(t)Phi>=0}
  \langle J_{n}\lambda_t\Phi,\Phi \rangle=0.
  \end{equation}
  To see this, observe that
  \[
  \langle J_{n}\lambda_t\Phi,\Phi \rangle=\Phi^TJ_{n}J_{n}\lambda_t\Phi =\Phi^T\lambda_t\Phi,
  \]
  and symmetric property of $\langle~ , ~\rangle$ shows that
  \[
  \Phi^T\lambda_t\Phi=\left(\Phi^T\lambda_t\Phi\right)^T=\Phi^T(\lambda_t)^T\Phi =-\Phi^T\lambda_t\Phi.
  \]
  Hence, by Equation (\ref{eqn:Phi^{perp}}) and Equation (\ref{eqn:<Phi,J_{n}lambda(t)Phi>=0}),
  \[
  J_{n}\lambda_t\Phi\in\text{Span}\{\Phi_{\varphi_1},\dots,\Phi_{\varphi_{n-1}}\}.
  \]
\end{proof}

By the above lemma, we can make the Jacobian matrix $D\vc{r}$ simpler via some elementary column operations:
\begin{eqnarray*}
D\vc{r}&=&F_t\left(
\begin{array}{c|c c c}
  \alpha_1f_1+\dots+\alpha_{n-1}f_{n-1}+\cos\theta_t f_{n} & 0 & \cdots & 0 \\
  \hline
  J_n\lambda_t\Phi & \Phi_{\varphi_1} & \cdots & \Phi_{\varphi_{n-1}}
\end{array}
\right)
\\
&=&F_t\left(
\begin{array}{c|c c c}
  \alpha_1f_1+\dots+\alpha_{n-1}f_{n-1}+\cos\theta_t f_{n} & 0 & \cdots & 0 \\
  \hline
  a_1\Phi_{\varphi_1}+\dots+a_{n-1}\Phi_{\varphi_{n-1}} & \Phi_{\varphi_1} & \cdots & \Phi_{\varphi_{n-1}}
\end{array}
\right)
\\
& &\quad\qquad \downarrow{\text{ \small{elementary column operations}}}
\\
& &F_t\left(
\begin{array}{c|c c c}
  \alpha_1f_1+\dots+\alpha_{n-1}f_{n-1}+\cos\theta_t f_{n} & 0 & \cdots & 0 \\
  \hline
  \vcz & \Phi_{\varphi_1} & \cdots & \Phi_{\varphi_{n-1}}
\end{array}
\right).
\\
&=&F_t\left(
\begin{array}{c|c}
  \alpha_1f_1+\dots+\alpha_{n-1}f_{n-1}+\cos\theta_t f_{n} & \vcz \\
  \hline
  \vcz & D\Phi
\end{array}
\right)
\end{eqnarray*}
This means that
\[
D\vc{r}=F_t\left(
\begin{array}{c|c}
  \alpha_1f_1+\dots+\alpha_{n-1}f_{n-1}+\cos\theta_t f_{n} & \vcz\\
  \hline
  \vcz & D\Phi
\end{array}
\right)E
\]
where the matrix $E$ is a product of some elementary matrices with determinant $1$. To find the volume element, we need to calculate $\sqrt{\left|\det\left\{(D\vc{r})^TJ_{n+1}D\vc{r}\right\}\right|~}$. It is easy to see that
\begin{eqnarray*}
& &\left|\det\left\{(D\vc{r})^TJ_{n+1}D\vc{r}\right\}\right|
\\
&=&
\left|
\det\left(
\begin{array}{c|c c c}
  (\alpha_1f_1+\dots+\alpha_{n-1}f_{n-1}+\cos\theta_t f_{n})^2 & \vcz \\
  \hline
  \vcz & (D\Phi)^TJ_{n}D\Phi
\end{array}
\right)
\right|
\\
&=&(\alpha_1f_1+\dots+\alpha_{n-1}f_{n-1}+\cos\theta_t f_{n})^2 \left|\det\left\{(D\Phi)^TJ_{n}D\Phi\right\}\right|.
\end{eqnarray*}
Hence, the volume element $\dd{V}_n$ is
\begin{eqnarray*}
   & & \dd{V}_n \\
   &=& \sqrt{\left|\det\left\{(D\vc{r})^TJ_{n+1}D\vc{r}\right\}\right|\,}\, \dd{\varphi_1}\cdots\dd{\varphi_{n-1}}\dd{t} \\
   &=& \left|\sum_{i=1}^{n-1}\alpha_if_i+\cos\theta_t f_{n}\right|
   \sqrt{\left|\det\left\{(D\Phi)^TJ_{n}D\Phi\right\}\right|\,}\, \dd{\varphi_1}\cdots\dd{\varphi_{n-1}}\dd{t}.
\end{eqnarray*}
Since the parametrization of $\vc{r}$ is regular, the volume form never vanishes. Hence, for $\epsilon_0=1$ or $-1$, we see
\[
\left|\sum_{i=1}^{n-1}\alpha_if_i+\cos\theta_t f_{n}\right|
=\epsilon_0\left(\sum_{i=1}^{n-1}\alpha_if_i+\cos\theta_t f_{n}\right)
\]

Now we can calculate the total mass :
\begin{eqnarray*}
& &m_{\text{tot}}\Big(\mathcal{P},\rho\Big)
\\
&=&\int_{\mathcal{P}}\rho\,\dd{V}_n
\\
&=&\int_{I}\int_{D_t} \rho\,\epsilon_0 \left(\sum_{i=1}^{n-1}\alpha_if_i+\cos\theta_t f_{n}\right)
\sqrt{\left|\det\left\{(D\Phi)^TJ_{n}D\Phi\right\}\right|\,}\, \dd{\varphi_1}\cdots\dd{\varphi_{n-1}}\dd{t}
\\
&=&\epsilon_0\sum_{i=1}^{n-1}\int_{I}\alpha_i\,\left(\int_{D_t}\rho f_i
\sqrt{\left|\det\left\{(D\Phi)^TJ_{n}D\Phi\right\}\right|\,}\, \dd{\varphi_1}\cdots\dd{\varphi_{n-1}}\right)\,\dd{t}
\\
& &+\epsilon_0\int_{I}\cos\theta_t\left(\int_{D_t}\rho f_{n}
\sqrt{\left|\det\left\{(D\Phi)^TJ_{n}D\Phi\right\}\right|\,}\, \dd{\varphi_1}\cdots\dd{\varphi_{n-1}}\right)\,\dd{t}
\\
&=&\epsilon_0\int_{I}m_{\text{cen}}(L_t,\rho)\cos\theta_t\,\dd t
\quad(\text{\small by Equation (\ref{f57}) in Lemma \ref{lem4m}})
\\
&=&\int_{\vc{c}(t)}m_{\text{cen}}(L_t,\rho)\cos\theta_t\,\dd t.
\quad ({\text{\small line integral interpretation and $\epsilon_0$ must be
 $1$}})
\\
&=&m_{\text{tot}}\Big(\vc{c}(t),\tilde{\rho}=m_{\text{cen}}(L_t,\rho) \cos\theta_t\Big).
\end{eqnarray*}
This completes the proof of Pappus' theorem on volume.


\subsection{Euclidean Pappus' Centroid Theorem on Volumes}\label{Pappus Euclid thm}
In this section, we will present the necessary tools and proof for the Euclidean version of Pappus' centroid theorem.

\subsubsection{Moving basis and parametrization of Pappus Solid}
The Pappus solid $\mathcal{P}=\mathcal{P}\big[\vc{c}(t),L_t,\theta_t\big]$ immediately determines the velocity vector $\vc{t}(t)$ and the unit normal vector $\vc{n}(t)\in T_{\vc{c}(t)}\mathbb{E}^n$ to the hyperplane containing the section $L_t$. Following the discussion in Section, we can construct an orthonormal basis $\{\vc{n}_1(t),\dots,\vc{n}_{n-1}(t)\}$ for the space $\text{Span}\{\mathbb{E}_{n+1},\vc{n}(t)\}^{\perp}$ $\subset\mathbb{R}^{n+1}$.

 Then the set $\{\vc{n},\vc{n}_1,\dots,\vc{n}_{n-1}\}$ is a orthonormal basis for $T_{\vc{c}}\mathbb{E}^n$ with respect to the dot product. Since each vector in $T_{\vc{c}}\mathbb{E}^n$ is orthogonal to $\mathbb{E}_{n+1}\in\mathbb{R}^{n+1}$, the set
\[
\{\vc{n},\vc{n}_1,\dots,\vc{n}_{n-1},\mathbb{E}_{n+1}\}
\]
is orthonormal with respect to the dot-product.
Let
\[
\mathbb{E}^{n-1}_{\vc{n},\vc{c}}
\]
be the hyperplane of $\mathbb{E}^n$ that passes through $\vc{c}$ and has $\vc{n}$ as a normal vector. Then a point $\vc{r}$ in $\mathbb{E}^{n-1}_{\vc{n},\vc{c}}$ is of the form
\begin{equation}\label{eqn:E^{n-1}(t,r)}
  \vc{r}=\vc{c}+x_1\vc{n}_1+\dots+x_{n-1}\vc{n}_{n-1}
  =\vc{c}+[\vc{n},\vc{n}_1,\cdots,\vc{n}_{n-1},\mathbb{E}_{n+1}]
  \begin{bmatrix}
    0\\ x_1 \\ \vdots \\ x_{n-1} \\ 0
  \end{bmatrix}.
\end{equation}

Let $C$ be a $\mathcal{C}^1$ curve on $\mathbb{E}^n$ parametrized by the arclength as
\[
\vc{c}(t),\quad t\in I\subset\mathbb{R}.
\]
Then it is clear that $\vc{t}(t)\in T_{\vc{c}(t)}\mathbb{E}^n$ and $\vc{t}(t)\cdot\vc{t}(t)=1$ for all $t\in I$.
There exists a set of $\mathcal{C}^1$ parametrized vectors $\{\vc{n}_1(t),\dots,\vc{n}_{n-1}(t)\}$ such that
\[
\{\vc{n}(t),\vc{n}_1(t),\dots,\vc{n}_{n-1}(t),\mathbb{E}_{n+1}\}
\]
is orthonormal with respect to the dot-product for all $t\in I$. For each $t\in I$, define the square matrix $F_t$ as
\begin{equation}\label{eqn:euclidean matrix F_t}
  F_t=[\vc{n}(t),\vc{n}_1(t),\cdots,\vc{n}_{n-1}(t),\mathbb{E}_{n+1}].
\end{equation}

From the expression (\ref{eqn:E^{n-1}(t,r)}), it is clear that a point $\vc{r}$ in $\mathbb{E}^{n-1}_{\vc{n}(t),\vc{c}(t)}$ can be parametrized as
\begin{equation*}
  \vc{r}=\vc{r}(t,x_1,\dots,x_{n-1})=\vc{c}(t)+F_t\begin{bmatrix}
                                                   0 \\
                                                   x_1 \\
                                                   \vdots \\
                                                   x_{n-1} \\
                                                   0
                                                 \end{bmatrix}.
\end{equation*}
The section $L_t$ is a subset of the hyperplane $\mathbb{E}^{n-1}_{\vc{n}(t),\vc{c}(t)}$, and the normal vector $\vc{n}(t)$ of the hyperplane intersects the curve $C$ with a slant angle $\theta_t$ at $\vc{c}(t)$.
This is nothing other than a parametrization of a Pappus solid $P(C,\mathcal{L},\rho)$. Following is a direct application of Theorem \ref{manifold material vector}.

\begin{prop}
A parametrization $\vc{r}(t,x_1,\dots,x_{n-1})$ for a Pappus solid along a centroid curve $\vc{c}(t)$ is shown as :
\begin{equation*}
  \vc{r}(t,x_1,\dots,x_{n-1})=\vc{c}(t)+F_t\begin{bmatrix}
                                                   0 \\
                                                   x_1 \\
                                                   \vdots \\
                                                   x_{n-1} \\
                                                   0
                                                 \end{bmatrix}.
\end{equation*}
for $t\in I$ and $(x_2,\dots,x_{n})$ varies in some region $D_t$ in $\mathbb{R}^{n-1}$ depending on $t$ with the fact that
\[
m_{\text{cen}}(L_t)\vc{c}(t)=\int_{L_t}\rho\vc{r}\,\dd{V}\!\left(\mathbb{E}^{n-1}\right),
\]
for each $t\in I$.
\end{prop}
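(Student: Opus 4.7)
The plan is to assemble the proposition from two ingredients that are already in place: the explicit orthonormal decomposition of the hyperplane $\mathbb{E}^{n-1}_{\vc{n}(t),\vc{c}(t)}$ and the general formula from Theorem \ref{manifold material vector}. Since the Pappus solid $\mathcal{P}$ is, by definition, the union of the leaves $L_t$ and each $L_t$ sits inside $\mathbb{E}^{n-1}_{\vc{n}(t),\vc{c}(t)}$, every point of $\mathcal{P}$ is reached by letting $t$ range over $I$ and then letting the coordinates $(x_1,\dots,x_{n-1})$ range over some planar region $D_t$ that records the shape of $L_t$.

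First I would justify the parametrization. By the orthonormality of $\{\vc{n}(t),\vc{n}_1(t),\dots,\vc{n}_{n-1}(t),\mathbb{E}_{n+1}\}$ and formula (\ref{eqn:E^{n-1}(t,r)}), every $\vc{r}\in\mathbb{E}^{n-1}_{\vc{n}(t),\vc{c}(t)}$ can be written uniquely as $\vc{c}(t)+x_1\vc{n}_1(t)+\dots+x_{n-1}\vc{n}_{n-1}(t)$, which is exactly $\vc{c}(t)+F_t(0,x_1,\dots,x_{n-1},0)^{T}$ with $F_t$ defined in (\ref{eqn:euclidean matrix F_t}). Setting $D_t:=\{(x_1,\dots,x_{n-1})\in\mathbb{R}^{n-1}\mid \vc{c}(t)+\sum x_i\vc{n}_i(t)\in L_t\}$ gives the required parameter domain, and the parametrization is regular because the columns of $F_t$ corresponding to $\vc{n}_1,\dots,\vc{n}_{n-1}$ are orthonormal.

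For the centroid identity, I would invoke the definition of a Pappus solid. By definition, the mass center of the section $L_t$ with the restricted mass distribution $\rho$ is the point $\vc{c}(t)$ on the centroid curve. Hence, applying Theorem \ref{manifold material vector} to the $(n-1)$-submanifold $L_t\subset\mathbb{E}^n$ with mass distribution $\rho$, we obtain
\[
\mathcal{F}(L_t)=\int_{L_t}\rho\,\vc{r}\,\dd V(\mathbb{E}^{n-1}).
\]
On the other hand, from (\ref{manifold mass}) and the fact that $[\mathcal{F}(L_t)]=\vc{c}(t)$, we have $\mathcal{F}(L_t)=m_{\text{cen}}(L_t,\rho)\vc{c}(t)$. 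Equating the two expressions yields the displayed identity.

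I do not expect any real obstacle here: the parametrization is essentially a restatement of the orthonormal decomposition established in the preceding paragraph, and the centroid identity is just Theorem \ref{manifold material vector} applied to each leaf, using the defining property of the centroid curve. If there is a subtle point, it is only to note that the regularity of $t\mapsto F_t$ (and hence the $\mathcal{C}^1$ dependence of the moving basis on $t$) is assumed, consistent with the $\mathcal{C}^1$ hypothesis on $\vc{c}(t)$ and the smooth deformation of the sections built into the definition of a Pappus solid.
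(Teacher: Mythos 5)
Your proposal is correct and follows essentially the same route as the paper: the paper presents this proposition as "a direct application of Theorem \ref{manifold material vector}", with the parametrization coming from the orthonormal decomposition (\ref{eqn:E^{n-1}(t,r)}) and the matrix $F_t$, and the centroid identity coming from applying that theorem to each leaf $L_t$ together with the defining property that the mass center of $L_t$ is $\vc{c}(t)$ and formula (\ref{manifold mass}). Your write-up merely makes explicit (the definition of $D_t$, the regularity remark, the equating of the two expressions for $\mathcal{F}(L_t)$) what the paper leaves implicit.
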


\begin{prop}\label{pro8}
The centered mass $m_{\text{cen}}(L_t)$ of the leaf $L_t$ is
\begin{align}
&\int_{D_t}\rho x_i\,\dd{x}_1\cdots\dd{x}_{n-1}=0, \quad\emph{\mbox{if }}~ i=1,\dots,n-1 \\
&\int_{D_t}\rho \,\dd{x}_1\cdots\dd{x}_{n-1}=m_{\text{cen}}(L_t).
\end{align}
\end{prop}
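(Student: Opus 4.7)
The plan is to mimic the proof of Lemma \ref{lem4m} but take advantage of the much simpler Euclidean volume element. Since the Jacobian of the leaf parametrization $\vc{r}(x_1,\dots,x_{n-1}) = \vc{c}(t) + x_1\vc{n}_1(t)+\cdots+x_{n-1}\vc{n}_{n-1}(t)$ (with $t$ fixed) has columns $\vc{n}_1(t),\dots,\vc{n}_{n-1}(t)$, and these form an orthonormal set in $T_{\vc{c}(t)}\mathbb{E}^n$ with respect to the usual dot product, the matrix $(D\vc{r})^T J_{n+1}(D\vc{r})$ reduces to the identity $I_{n-1}$, and thus
\[
\dd V_{n-1} = \dd x_1 \cdots \dd x_{n-1}.
\]

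Next, I invoke Theorem \ref{manifold material vector} applied to $L_t$, which by the previous proposition has mass center $\vc{c}(t)$, to write the mass center vector as
\begin{align*}
m_{\text{cen}}(L_t)\,\vc{c}(t)
&= \int_{L_t} \rho \vc{r} \, \dd V_{n-1}
= \int_{D_t} \rho\!\left(\vc{c}(t) + \sum_{i=1}^{n-1} x_i\vc{n}_i(t)\right) \dd x_1\cdots \dd x_{n-1}\\
&= \left(\int_{D_t}\rho\,\dd x_1\cdots\dd x_{n-1}\right)\vc{c}(t) + \sum_{i=1}^{n-1}\left(\int_{D_t}\rho x_i\,\dd x_1\cdots\dd x_{n-1}\right)\vc{n}_i(t).
\end{align*}

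Finally I conclude by linear independence. The set $\{\vc{n}_1(t),\dots,\vc{n}_{n-1}(t),\vc{c}(t)\}$ spans an $n$-dimensional subspace of $\mathbb{R}^{n+1}$ and is linearly independent, since the $\vc{n}_i$ lie in $T_{\vc{c}(t)}\mathbb{E}^n$ (last coordinate $0$) while $\vc{c}(t)\in\mathbb{E}^n$ has last coordinate $1$, and the $\vc{n}_i$ are mutually orthogonal. Matching coefficients of $\vc{c}(t)$ yields $\int_{D_t}\rho\,\dd x_1\cdots\dd x_{n-1} = m_{\text{cen}}(L_t)$, and matching coefficients of each $\vc{n}_i(t)$ yields $\int_{D_t}\rho x_i\,\dd x_1\cdots\dd x_{n-1}=0$. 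There is no real obstacle here; unlike the non-Euclidean Lemma \ref{lem4m}, no bilinear form $\langle\cdot,\cdot\rangle$ with signature issues intervenes, and the absolute value inside the square root disappears because the Gram matrix is exactly $I_{n-1}$.
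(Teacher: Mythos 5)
Your proof is correct and follows essentially the same route as the paper: express the mass center vector $m_{\text{cen}}(L_t)\vc{c}(t)$ as $\int_{L_t}\rho\vc{r}\,\dd V_{n-1}$, expand $\vc{r}$ in the orthonormal moving basis, and compare coefficients using the linear independence of $\{\vc{n}_1(t),\dots,\vc{n}_{n-1}(t),\vc{c}(t)\}$. The extra details you supply (the Gram matrix being $I_{n-1}$, the last-coordinate argument for independence) are correct elaborations of steps the paper leaves implicit.
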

\begin{proof}
The mass center of the section $L_t$ is $\vc{c}(t)$. Hence the mass center vector is
\begin{align*}
m_{\text{cen}}(L_t)\vc{c}(t)&=\int_{L_t}\rho\vc{r}\,\dd{V}\!\left(\mathbb{E}^{n-1}\right)
\\
&=\int_{D_t}\rho\{\vc{c}(t)+x_1\vc{n}_1(t)+\dots+x_{n-1}\vc{n}_{n-1}(t)\}\, \dd{x}_1\cdots\dd{x}_{n-1}
\\
&=\left(\int_{D_t}\rho\,\dd{x}_1\cdots\dd{x}_{n-1}\right)\vc{c}(t)
+\sum_{i=1}^{n-1}\left(\int_{D_t}\rho x_i\,\dd{x}_1\cdots\dd{x}_{n-1}\right)\vc{n}_i(t).
\end{align*}
The conclusion comes from the linear independency.
\end{proof}

\subsubsection{Derivative of the basis matrix}
\begin{lemma}
  The matrix $F_t$ given in \emph{(\ref{eqn:euclidean matrix F_t})} satisfies the sytem of differential equations
  \begin{equation*}\label{}
    {F_t}'=F_t\Lambda_t
  \end{equation*}
  where the matrix $\Lambda_t$ is of the form
  \begin{equation*}\label{}
    \Lambda_t
    =\left(
     \begin{array}{c|c|c}
       0 & \begin{array}{ccc}
             \alpha_1 & \cdots & \alpha_{n-1}
           \end{array} & 0 \\
       \hline
       \begin{array}{c}
         -\alpha_1 \\
         \vdots \\
         -\alpha_{n-1}
       \end{array} & \lambda_t & \begin{array}{c}
                                   0 \\
                                   \vdots \\
                                   0
                                 \end{array} \\
       \hline
       0 & \begin{array}{ccc}
             0 & \cdots & 0
           \end{array} & 0
     \end{array}
     \right)
  \end{equation*}
  for some skew-symmetric matrix $\lambda_t$.
\end{lemma}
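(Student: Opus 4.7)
The plan is to mimic the non-Euclidean lemma already proved earlier in the paper, but with the crucial simplification that the ambient bilinear form in the Euclidean case is the usual dot product and that the last column of $F_t$ is a \emph{constant} vector. I would first observe that, by construction, the columns of $F_t$ form an orthonormal basis of $\mathbb{R}^{n+1}$ with respect to the standard dot product, so $F_t^{T}F_t = I_{n+1}$. Differentiating this identity yields $(F_t')^{T}F_t + F_t^{T}F_t' = 0$, which means that the matrix $\Lambda_t := F_t^{T}F_t'$ is skew-symmetric. Since $F_t^{-1} = F_t^{T}$, this gives at once ${F_t}' = F_t\Lambda_t$.

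Next I would pin down the block structure of $\Lambda_t$ by exploiting the fact that the last column of $F_t$ is $\mathbb{E}_{n+1}$, a constant vector. Therefore the last column of ${F_t}'$ is $\vcz$, and since ${F_t}' = F_t\Lambda_t$ with $F_t$ invertible, the last column of $\Lambda_t$ must also vanish. Combining with the skew-symmetry of $\Lambda_t$, the last row vanishes as well. This already forces the $0$'s displayed in the last row and column of $\Lambda_t$.

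The remaining upper-left $n\times n$ block of $\Lambda_t$ is itself skew-symmetric. Its $(1,1)$ entry is $0$ (diagonal of a skew-symmetric matrix); I would name the entries of its first row, from column $2$ to $n$, as $\alpha_1(t),\dots,\alpha_{n-1}(t)$, so that by skew-symmetry the first column, from row $2$ to $n$, reads $-\alpha_1(t),\dots,-\alpha_{n-1}(t)$. The inner $(n-1)\times(n-1)$ block is then a skew-symmetric matrix depending only on $t$, which I would denote by $\lambda_t$. This reproduces exactly the form of $\Lambda_t$ claimed in the statement.

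No obstacle is really expected here: the argument is essentially a bookkeeping exercise on derivatives of an orthonormal frame, made easier than its non-Euclidean analogue by the fact that $\mathbb{E}_{n+1}$ is genuinely constant (no cone factor $\delta_X\vc{c}$ appears). The only mild subtlety is to notice that because the ambient form is positive definite here, one uses $F_t^{T}F_t = I_{n+1}$ rather than $F_t^{T}J_{n+1}F_t = J_{n+1}$, so that no $J_{n+1}$ intervenes in the relation $F_t' = F_t\Lambda_t$—in particular the last column of $\Lambda_t$ is zero \emph{directly}, with no sign or inertia adjustment.
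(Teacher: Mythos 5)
Your proposal is correct and follows essentially the same route as the paper: skew-symmetry of $\Lambda_t=F_t^{T}F_t'$ obtained by differentiating the orthonormality relations, and the vanishing of the last column (hence, by skew-symmetry, the last row) forced by the constancy of $\mathbb{E}_{n+1}$. The paper phrases the skew-symmetry entrywise via $\frac{\dd}{\dd t}\langle\vc{n}_i,\vc{n}_j\rangle=0$ rather than through the matrix identity $F_t^{T}F_t=I_{n+1}$, but the argument is the same.
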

\begin{proof} Set $\vc{n}_0:=\vc{n}$, then $\langle \vc{n}_i ,\vc{n}_j\rangle=0$ for different indexes $i, j$ implies $\frac{\dd}{\dd{t}}\langle \vc{n}_i ,\vc{n}_j\rangle=\langle \vc{n}_i' ,\vc{n}_j\rangle+\langle \vc{n}_i ,\vc{n}_j'\rangle=0$. So skew-symmetric of $\Lambda_t$ is obtained. And the relation
  \[
  {F_t}'=[\vc{n}'(t),{\vc{n}_1}'(t),\cdots,{\vc{n}_{n-1}}'(t),\vcz]
  =[\vc{n}(t),{\vc{n}_1}(t),\cdots,{\vc{n}_{n-1}}(t),\mathbb{E}_{n+1}]\Lambda_t
  \]
  shows that the last column of $\Lambda_t$ is the zero vector.
\end{proof}
\subsubsection{Proof of the Pappus Theorem}
In order to calculate the volume of the Pappus solid $P(C,\mathcal{L},\rho)$, we need to find the volume element $\dd{V}^n$ with respect to the variables $t,x_1,\dots,x_{n-1}$:
\[
\dd{V}^n=\sqrt{\left|\det\left\{(D\vc{r})^TD\vc{r}\right\}\right|}\,\,\dd{t}\,\dd{x}_1\cdots\dd{x}_{n-1},
\]
where $D\vc{r}$ is the Jacobian matrix of $\vc{r}$.

Let us calculate the Jacobian matrix $D\vc{r}$.
\begin{align*}\label{}
  D\vc{r} & =(\vc{r}_t,\vc{r}_{x_1},\ldots,\vc{r}_{x_{n-1}})\\
  & =[\vc{c}',\vcz,\cdots,\vcz]
  +\left(
   \begin{array}{cc}
     {F_t}'\left[
           \begin{array}{c}
             0 \\
             x_1 \\
             \vdots \\
             x_{n-1} \\
             0
           \end{array}
           \right] & F_t\left(
                            \begin{array}{c}
                               \begin{array}{ccc}
                               0 & \cdots & 0
                            \end{array} \\
                            \hline
                            \begin{array}{c}
                            \\I_{n-1} \\{}
                            \end{array} \\
                            \hline
                            \begin{array}{ccc}
                            0 & \cdots & 0
                            \end{array}
   \end{array}
   \right)
   \end{array}
   \right)
  \\
  & =[\vc{t},\vcz,\cdots,\vcz]
  +F_t\left(
   \begin{array}{cc}
     \Lambda_t\left[
           \begin{array}{c}
             0 \\
             x_1 \\
             \vdots \\
             x_{n-1} \\
             0
           \end{array}
           \right] &        \left(
                            \begin{array}{c}
                               \begin{array}{ccc}
                               0 & \cdots & 0
                            \end{array} \\
                            \hline
                            \begin{array}{c}
                            \\I_{n-1} \\{}
                            \end{array} \\
                            \hline
                            \begin{array}{ccc}
                            0 & \cdots & 0
                            \end{array}
   \end{array}
   \right)
   \end{array}
   \right)
  \\
  & =\left(F_t
     \left[
           \begin{array}{c}
             \cos\theta_t \\
             \beta_1 \\
             \vdots \\
            \beta_{n-1} \\
             0
           \end{array}
           \right],\vcz,\ldots,\vcz\right)
  +F_t\left(
   \begin{array}{cc}
     \Lambda_t\left[
           \begin{array}{c}
             0 \\
             x_1 \\
             \vdots \\
             x_{n-1} \\
             0
           \end{array}
           \right] &        \left(
                            \begin{array}{c}
                               \begin{array}{ccc}
                               0 & \cdots & 0
                            \end{array} \\
                            \hline
                            \begin{array}{c}
                            \\I_{n-1} \\{}
                            \end{array} \\
                            \hline
                            \begin{array}{ccc}
                            0 & \cdots & 0
                            \end{array}
   \end{array}
   \right)
   \end{array}
   \right)
  \\
  & =F_t\left(
   \begin{array}{c|c}
     \cos\theta_t+\alpha_1x_1+\dots+\alpha_{n-1}x_{n-1} & \begin{array}{ccc}
                                         0 & \cdots & 0
                                       \end{array} \\
     \hline
                  \begin{pmatrix}
                  \beta_1 \\
                  \vdots \\
                  \beta_{n-1}
               \end{pmatrix}+ \lambda_t \begin{pmatrix}
                  x_1 \\
                  \vdots \\
                  x_{n-1}
               \end{pmatrix}& I_{n-1} \\
     \hline
     0 & \begin{array}{ccc}
           0 & \cdots & 0
         \end{array}
   \end{array}
   \right)=:F_t A
\end{align*}

%
Hence,
\begin{align*}
\dd{V}\left(\mathbb{E}^n\right)
&=\sqrt{\left|\det\left\{(D\vc{r})^TD\vc{r}\right\}\right|}\,\,\dd{x}_1\cdots\dd{x}_{n-1}\,\dd{t}\\
&=\sqrt{\left|\det(A^T A)\right|}\,\,\dd{x}_1\cdots\dd{x}_{n-1}\,\dd{t}\\
&=\left|\cos\theta_t+\alpha_1x_1+\dots+\alpha_{n-1}x_{n-1}\right|\,\dd{x}_1\cdots\dd{x}_{n-1}\dd{t}\\
&=\left(\cos\theta_t+\alpha_1x_1+\dots+\alpha_{n-1}x_{n-1}\right)\,\dd{x}_1\cdots\dd{x}_{n-1}\dd{t}.\\
\end{align*}
In conclusion, the above $\dd{V}\left(\mathbb{E}^n\right)$ formula and Proposition \ref{pro8} derives the Euclidean Pappus' Centroid theorem:
\begin{align*}\label{}
   &m_{\text{tot}}\Big(\mathcal{P},\rho\Big)
\\
=&\int_{\mathcal{P}}\rho\,\dd{V}\left(\mathbb{E}^n\right)
\\
   =&\int_{I}\int_{D_t}\rho(\cos\theta_t+\alpha_1x_1+\dots+\alpha_{n-1}x_{n-1})\, \dd{x}_1\cdots\dd{x}_{n-1}\dd{t}.
  \\
  =&\int_{I} \cos\theta_t\left(\int_{D_t}\rho \,\dd{x}_1\cdots\dd{x}_{n-1}\right)\dd{t}
    +\sum_{i=1}^{n-1}\int_{I}\alpha_i
    \left(\int_{D_t} \rho x_i\,\dd{x}_1\cdots\dd{x}_{n-1}\right)\dd{t}
  \\
   =&\int_{\vc{c}(t)}m_{\text{cen}}(L_t)\cos\theta_t\,\,\dd t.
\end{align*}

\subsection{Application of the Pappus' Theorem}
In this section, we will examine applications of the Pappus' theorem in non-Euclidean spaces. While the Pappus' theorem has been well studied in Euclidean spaces, in non-Euclidean spaces, there have been results only by Gray, Miquel \cite{gray}. Now, we will present several examples from the view-point of non-Euclidean mass center.

Here, we reconsider the Pappus' centroid theorem (\ref{pappus}) for Pappus' solid $\mathcal{P}$:
 \begin{align*}
    m_{\text{tot}}\Big(\mathcal{P}\big[\vc{c}(t),L_t,\theta_t)\big],\rho\Big)
    &=m_{\text{tot}}\Big(\vc{c}(t),\tilde{\rho}=m_{\text{cen}}(L_t,\rho) \cos\theta_t\Big).\\
 \end{align*}
If the density is trivial case, i.e., $\rho=1$, we call  $m_{\text{tot}}(\mathcal{P})$ as volume of $\mathcal{P}$ with notation $\text{Vol}(\mathcal{P})$.  The following four sections show the use of Pappus' theorem to determine the volume of Pappus solid with $\theta_t=0$. As far as we know, the following examples have not been previously documented, despite being very simple objects.

\subsubsection{Volume of a non-Euclidean solid torus}
If the radius of its circular cross section is $r$, and the radius of the circle traced by the mass center of the cross sections is $R$, then the volume $V$ of the Euclidean torus is well-known and that is $2\pi^2 R r^2$ by the Euclidean Pappus theorem.

Pappus' centroid theorem shows below volume formulas of non-Euclidean solid torus.

\begin{align*}
\text{Vol}(\text{spherical solid torus})
=&\int_{S^1(R)} ~m_{\text{cen}}(B^2(r)) ~  \dd t\\
=&\int_0^{2\pi \sin R} \pi \sin^2 r ~  \dd t\\
=&2\pi^2 \sin R\sin^2 r,\\
\end{align*}
\begin{align*}
\text{Vol}(\text{hyperbolic solid torus})
=&\int_{S^1(R)} ~m_{\text{cen}}(B^2(r)) ~  \dd t\\
=&\int_0^{2\pi \sinh R} \pi \sinh^2 r ~  \dd t\\
=&2\pi^2 \sinh R\sinh^2 r.\\
\end{align*}

\subsubsection{Volume of a non-Euclidean right circular cone}\label{rcc}
Already there is a well-known volume formula for a Euclidean right circular cone with radius $r$ and height $h$, which is $\frac{1}{3}(\pi r^2)h$. However, we could not find the volume formula for a non-Euclidean right circular cone in any other context.

For a spherical right circular cone  with radius $r$ and height $h$, the right cone has the following volume:
\begin{equation}\label{cone}
\pi \left( h-\frac{\sin h}{\sqrt{\tan^2 r +\sin^2 h}} \tan^{-1}\left( \frac{\sqrt{\tan^2 r +\sin^2 h}}{\cos h}\right) \right). \end{equation}
In order to find the formula, we adapt the Pappus' centroid theorem (\ref{pappus' centroid theorem}). First, we consider a spherical right triangle with standard notation three sides $a, b, c$ and three angles $A, B, C(=\frac{\pi}{2})$, then we get (see \cite{todhunter}, Art. 62)
\begin{equation}\label{right triangle}
    \cot A=\cot a \sin b.
\end{equation}
	\begin{figure}[h!]
		\begin{center}\label{fig:2266}
\begingroup%
  \makeatletter%
  \providecommand\color[2][]{%
    \errmessage{(Inkscape) Color is used for the text in Inkscape, but the package 'color.sty' is not loaded}%
    \renewcommand\color[2][]{}%
  }%
  \providecommand\transparent[1]{%
    \errmessage{(Inkscape) Transparency is used (non-zero) for the text in Inkscape, but the package 'transparent.sty' is not loaded}%
    \renewcommand\transparent[1]{}%
  }%
  \providecommand\rotatebox[2]{#2}%
  \newcommand*\fsize{\dimexpr\f@size pt\relax}%
  \newcommand*\lineheight[1]{\fontsize{\fsize}{#1\fsize}\selectfont}%
  \ifx\svgwidth\undefined%
    \setlength{\unitlength}{157.84080855bp}%
    \ifx\svgscale\undefined%
      \relax%
    \else%
      \setlength{\unitlength}{\unitlength * \real{\svgscale}}%
    \fi%
  \else%
    \setlength{\unitlength}{\svgwidth}%
  \fi%
  \global\let\svgwidth\undefined%
  \global\let\svgscale\undefined%
  \makeatother%
  \begin{picture}(1,0.43013716)%
    \lineheight{1}%
    \setlength\tabcolsep{0pt}%
    \put(0.3915075,0.15673196){\color[rgb]{0,0,0}\makebox(0,0)[lt]{\lineheight{0}\smash{\begin{tabular}[t]{l}$t$\end{tabular}}}}%
    \put(0.06178788,0.15383783){\color[rgb]{0,0,0}\makebox(0,0)[lt]{\lineheight{0}\smash{\begin{tabular}[t]{l}$0$\end{tabular}}}}%
    \put(0.6819695,0.15889363){\color[rgb]{0,0,0}\makebox(0,0)[lt]{\lineheight{0}\smash{\begin{tabular}[t]{l}$h$\end{tabular}}}}%
    \put(0,0){\includegraphics[width=\unitlength,page=1]{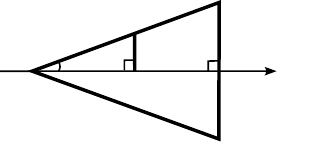}}%
    \put(0.42392852,0.26458099){\color[rgb]{0,0,0}\makebox(0,0)[lt]{\lineheight{0}\smash{\begin{tabular}[t]{l}$\alpha$\end{tabular}}}}%
    \put(0.68526779,0.30734558){\color[rgb]{0,0,0}\makebox(0,0)[lt]{\lineheight{0}\smash{\begin{tabular}[t]{l}$\gamma$\end{tabular}}}}%
    \put(0.23958808,0.22876211){\color[rgb]{0,0,0}\makebox(0,0)[lt]{\lineheight{0}\smash{\begin{tabular}[t]{l}$\varphi$\end{tabular}}}}%
  \end{picture}%
\endgroup%

\caption{the triangular cross-section passing through the apex of a right circular cone}
  \end{center}
	\end{figure}
From the right triangle formed by a plane cut passing through the apex and center of the base circle of the cone, we obtain the above figure and the following relation from (\ref{right triangle}):
\begin{equation}\label{triangle}
    \cot\varphi=\cot r \sin h=\cot \alpha\sin t .
\end{equation}
So the spherical right circular cone with radius $r$ and height $h$ becomes a Pappus solid $\mathcal{P}=\mathcal{P}\big[\vc{c}(t),L_t,\theta_t\big]$, where $\vc{c}(t)=t ~(0\le t\le h)$ and $L_t=B^2(\alpha)$ and $\theta_t=0$. Note that from the table of $B^k(r)$ and the formula (\ref{triangle}), the centered mass of $L_t$ could be obtained by
\begin{align*} m_{\text{cen}}(L_t)&=\pi \sin^2 \alpha  \\
 &=\frac{\pi}{1+\cot^2 \alpha}\\
 &=\frac{\pi}{1+(\cot r \sin h)^2 \csc^2 t }.
\end{align*}
Hence Pappus' theorem derives that
\begin{align*}
\text{Vol}(\mathcal{P})
=&\int_0^h m_{\text{cen}}(L_t) ~  \dd t\\
=&\pi \int_0^h \frac{\dd t}{1+a^2 \csc^2 t }, \quad\quad a= \cot r \sin h   \\
=&\pi \left[  t-\frac{a}{\sqrt{1+a^2}}\tan^{-1}\left( \frac{\sqrt{1+a^2}}{a} \tan t \right)  \right] ^h_0   \\
=&\pi \left( h-\frac{\sin h}{\sqrt{\tan^2 r +\sin^2 h}} \tan^{-1}\left( \frac{\sqrt{\tan^2 r +\sin^2 h}}{\cos h}\right) \right).
\end{align*}

Consider a hyperbolic right circular cone  with radius $r$ and height $h$, the situation is very similar to the spherical case. We have to change the formulas (\ref{right triangle}) and (\ref{triangle}) to  $$\cot A=\coth a \sinh b,\quad  \cot\varphi=\coth r \sinh h=\coth \alpha\sinh t.$$
Then $L_t$ is similarly obtained by
\begin{align*} m_{\text{cen}}(L_t)&=\pi \sinh^2 \alpha  \\
 &=\frac{\pi}{\coth^2 \alpha -1}\\
 &=\frac{\pi}{(\coth r \sinh h)^2 \,\text{csch}^2 t -1}.
\end{align*}
Hence Pappus' theorem also derives that
\begin{align*}
\text{Vol}(\mathcal{P})
=&\pi \int_0^h \frac{\dd t}{a^2\, \text{csch}^2 t -1}, \quad\quad a= \coth r \sinh h   \\
=&\pi \left[  -t+\frac{a}{\sqrt{1+a^2}}\tanh^{-1}\left( \frac{\sqrt{1+a^2}}{a} \tanh t \right)  \right] ^h_0   \\
=&\pi \left( -h+\frac{\sinh h}{\sqrt{\tanh^2 r +\sinh^2 h}} \tanh^{-1}\left( \frac{\sqrt{\tanh^2 r +\sinh^2 h}}{\cosh h}\right) \right).
\end{align*}

\subsubsection{Volume of an $n$-dimensional non-Euclidean right cone with a $(n-1)$-dimensional ball base}
There is a well-known volume formula for a Euclidean right cone with base $B^{n-1}(r)$ and height $h$, which is $\frac{1}{n} \text{Vol}(B^{n-1}(r))h=\frac{1}{n}(\frac{\pi^{\frac{n-1}{2}}}{\Gamma(\frac{n+1}{2})}  r^{n-1})h$.

For a spherical right cone with radius $r$ disk and height $h$, the right cone has the following volume:
\begin{equation}\label{general cone}
\frac{\pi^{\frac{n-1}{2}}}{\Gamma(\frac{n+1}{2})} \int_0^h \frac{\dd t}{(a^2 \csc^2 t +1)^{\frac{n-1}{2}}}, \quad\quad a= \cot r \sin h .
\end{equation}
In order to find the formula, we need a similar process showed in the example  \ref{rcc} with  $L_t=B^{n-1}(\alpha)$ instead of $L_t=B^{2}(\alpha)$.

Hence Pappus theorem derives that
\begin{align*}
&\int_0^h m_{\text{cen}}(L_t) ~  \dd t\\
=&\int_0^h \frac{\pi^{\frac{n-1}{2}}}{\Gamma(\frac{n+1}{2})} \sin^{n-1}  \alpha ~  \dd t\\
=&\frac{\pi^{\frac{n-1}{2}}}{\Gamma(\frac{n+1}{2})} \int_0^h \frac{\dd t}{(a^2 \csc^2 t+1)^{\frac{n-1}{2}}}, \quad\quad a= \cot r \sin h   \\
\end{align*}
For a hyperbolic right cone with radius $r$ disk and height $h$, the right cone has the following volume similar to (\ref{general cone}):
\begin{equation*}
\frac{\pi^{\frac{n-1}{2}}}{\Gamma(\frac{n+1}{2})} \int_0^h \frac{\dd t}{(a^2\, \text{csch}^2 t -1)^{\frac{n-1}{2}}}, \quad\quad a= \coth r \sinh h .
\end{equation*}

\subsubsection{Volume of a non-Euclidean right cone with a regular $n$-gonal base}
For a spherical right cone  with a regular $n$-gonal $P_n^{\mathbb{S}}(a)$ base and  height $h$, the cone has the following volume:
\begin{equation}\label{cone2}
\int^h_0 \frac{n}{\sqrt{1+\cot^2 b\sin^2 h \csc^2 t}}\tan^{-1}\left( \frac{\tan\frac{\pi}{n}}{\sqrt{1+\cot^2 b\sin^2 h \csc^2 t}}\right)~\dd t,  \end{equation}
where $\cot^2 b= \tan^2 \frac{\pi}{n} \cot^2 \frac{a}{2} -1$.
\begin{figure}[h!]
		\begin{center}\label{fig:10}
   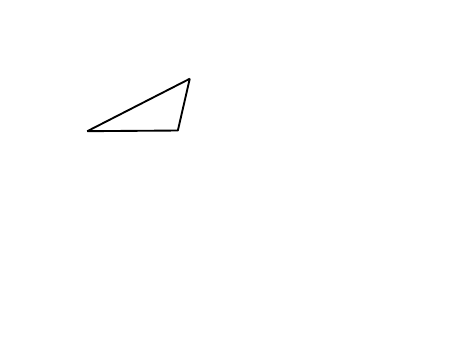

   \caption{a spherical or hyperbolic right cone with a regular $n$-gonal base}
		\end{center}
	\end{figure}

First, we consider a spherical right triangle $\triangle OAC$, where the point $O$ is the apex of the cone, the point $A$ is the center of the base polygon, and the point $C$ is a midpoint of one side edge.

Here the spherical right cone over the regular polygon $P_n^{\mathbb{S}}(a)$ with height $h$ becomes a Pappus solid $\mathcal{P}=\mathcal{P}\big[\vc{c}(t),L_t,\theta_t\big]$, where $\vc{c}(t)=t ~(0\le t\le h)$ and $L_t=P_n(\gamma)$ and $\theta_t=0$.  Therefore, according to Pappus' centroid theorem (\ref{pappus' centroid theorem}), the volume of the cone can be expressed as follows:
\begin{equation*}
\int_0^h m_{\text{cen}}(P_n(\gamma)) ~  \dd t
=\int_0^h \frac{n\gamma}{2}\tan  \frac{\gamma}{2} \cot \frac{\pi}{n} \dd t. \end{equation*}
Let us denote the right triangle $\triangle A'B'C'$ of the section $L_t=P_n(\gamma)$  corresponding to the right triangle $\triangle ABC$ of the base $P_n^{\mathbb{S}}(a)$, where $\overline{B'C'}=\frac{\gamma}{2}$ and define $\overline{A'C'}=\alpha$. Then $\alpha$ and the $\gamma$ are obtained by
\begin{align*}
    \sin^2 \alpha =& \frac{1}{1+\cot^2 \alpha}= \frac{1}{1+\cot^2 b \sin^2 h \csc^2 t},\\
    \tan\frac{\gamma}{2}=& \tan \frac{\pi}{n} \sin \alpha.
\end{align*}
Finally, we get the volume of the cone over the regular polygon $P_n^{\mathbb{S}}(a)$ with height $h$ as
\begin{align*}
&\int_0^h \frac{n\gamma}{2}\tan  \frac{\gamma}{2} \cot \frac{\pi}{n} \dd t\\
=&\int^h_0 \frac{n}{\sqrt{1+\cot^2 b\sin^2 h \csc^2 t}}\tan^{-1}\left( \frac{\tan\frac{\pi}{n}}{\sqrt{1+\cot^2 b\sin^2 h \csc^2 t}}\right)~\dd t.
\end{align*}

For a hyperbolic right cone  with a regular $n$-gonal $P_n^{\mathbb{H}}(a)$ base and  height $h$, the cone has the following volume:
\begin{align*}
&\int_0^h \frac{n\gamma}{2}\tanh  \frac{\gamma}{2} \cot \frac{\pi}{n} \dd t\\
=&\int^h_0 \frac{n}{\sqrt{-1+\coth^2 b\sinh^2 h \text{ csch}^2 t}}\tanh^{-1}\left( \frac{\tan\frac{\pi}{n}}{\sqrt{-1+\coth^2 b \sinh^2 h \text{ csch}^2 t}}\right)~\dd t,
\end{align*}
where $\coth^2 b= \tan^2 \frac{\pi}{n} \coth^2 \frac{a}{2} +1$. This case is left as an exercise for the reader.
\vskip 1pc

The non-Euclidean volume can be computed not only for the objects introduced above but also for various symmetric shapes. However, deriving the volume formula without relying on Pappus’ centroid theorem reveals the inherent difficulty of such calculations, even for simple shapes like a right circular cone or a solid torus.



\vskip 1pc
\noindent Department of Mathematics, University of Seoul, Seoul 02504, Korea\\
{\it Email address}: \texttt{yhcho@uos.ac.kr}\\

\noindent Faculty of Liberal Education, Seoul National University,
 Seoul 08826, Korea \\
{\it  Email address}: \texttt{hgchoi66@snu.ac.kr}
\vfill\eject

\vfill\eject

\end{document}